\newtheorem{theorem}{Theorem}[section]
\newtheorem{corollary}[theorem]{Corollary}
\newtheorem{definition}[theorem]{Definition}
\newtheorem{lemma}[theorem]{Lemma}
\newtheorem{problem}[theorem]{Problem}
\newtheorem{remark}[theorem]{Remark}
\newtheorem{remarks}[theorem]{Remarks}
\newenvironment{proof}[1][Proof]{\textbf{#1.} }{}
\newcommand{\qd}{\ \rule{0.5em}{0.5em}}
\newenvironment{namelist}[1]{%
\begin{list}{}
    {
      
      \settowidth{\labelwidth}{#1}
      \setlength{\leftmargin}{1.1\labelwidth}
     }
     }{%
\end{list}}
\DeclareMathOperator{\gap}{gap}
\begin{document}
\frontmatter
\thispagestyle{empty}
\begin{center}
{\Large DETOURS IN GRAPHS}\\
\vspace{10mm}

{\large by}\\
\vspace{10mm}

{\Large FRANK ERIC SUTHERLAND BULLOCK}\\
\vspace{10mm}

{\large submitted in accordance with the requirements}\\
\vspace{10mm}

{\large for the degree of}\\
\vspace{10mm}

{\Large DOCTOR OF PHILOSOPHY}\\
\vspace{10mm}

{\large in the subject}\\
\vspace{10mm}

{\Large MATHEMATICS}\\
\vspace{10mm}

{\large at the}\\
\vspace{10mm}

{\Large UNIVERSITY OF SOUTH AFRICA}\\
\vspace{10mm}

{\Large PROMOTER: PROF M FRICK}\\
\vspace{5mm}

{\Large JUNE 2004}
\end{center}
\newpage
\setcounter{page}{1}
\vspace*{2.5cm}
\noindent \textbf{\huge Acknowledgements}\\

\noindent I wish to express my gratitude to  my promoter,
Prof M Frick, for her help and constant encouragement 
during the preparation of this thesis.

\noindent I would also like to thank my wife,
Margaret, for her understanding  and patience during those
times when I was so preoccupied with
``detours".
%promotor pleasure privilege enthusiam encouragement
%I should like to thank
%family wife support during time preparation studying 
\addtocontents{toc}{\protect\hfill\protect\textit{Page}\vskip0pt}
\addcontentsline{toc}{chapter}{\protect\numberline{\ }{Acknowledgements}}
\tableofcontents
\addtocontents{lof}{\protect\hfill\protect\textit{Page}\vskip0pt}
\addcontentsline{toc}{chapter}{\protect\numberline{\ }{List of Figures}}
\listoffigures
\chapter[\ \ \ \ Summary]{Summary}
A \textit{detour} $P$ in a graph $G$ is a longest path in $G$,
and the order of $P$ is called the detour order
of $G$, denoted by $\tau (G)$. 
The \textit{detour order} of a vertex $v$ of 
$G$ is the order of a longest path in $G$ with endvertex $v$.
The \textit{detour sequence} of $G$ is a nondecreasing
sequence consisting of the detour orders of its vertices.
A simple, connected graph is called a 
\textit{detour graph} if its detour sequence
is constant. The \textit{detour deficiency} of a
graph $G$ is $|V(G)| - \tau(G)$.

In Chapter One we prove some basic properties of
detour sequences of simple, connected graphs. 
We show that every detour order
between the least and greatest detour order occurs
in the detour sequence of a connected, unicyclic graph.
We characterise the detour sequence of trees
(using a proof based on detours) and complete
multipartite graphs. We also give a new upper
bound for the $n$\textit{th} detour chromatic 
number of a simple, connected graph $G$ 
in terms of $\tau(G)$.

In Chapter Two we use some basic
properties of connected, nontraceable 
detour graphs (CND graphs) to prove that such a graph 
has order greater than $9$. We 
give a number of constructions 
for CND graphs of
all orders greater than 17 and all
detour deficiencies greater than zero.
These constructions are used to give examples
of CND graphs with chromatic number $k$,
$k \geq 2$, and girths up to 7. Moreover we show that,
for all positive integers $l \ge 1$ and $k \geq 3$, there
are nontraceable detour graphs with chromatic number $k$
and detour deficiency $l$. In the last section
of Chapter Two we present a number of open 
problems.

In Chapter Three we determine the two smallest, claw-free,
$2$-connected, nontraceable graphs. Both these graphs
are in fact CND graphs, and are therefore also the smallest
claw-free, CND graphs. We use one of these graphs
to construct a new family of $2$-connected, claw-free, maximal
nontraceable graphs. 

\noindent \textbf{Key terms}:\\
graph theory; longest path; detour; detour sequence;
girth; homogeneously traceable; claw-free; traceable;
nontraceable; detour chromatic number; hamiltonian path

\mainmatter
\chapter{Detour Sequences}
\section{Definitions and notation}
A \emph{simple graph} $G$ \index{simple graph} 
with $n$ vertices \index{vertices of a graph} 
and $m$ edges \index{edges of a graph}
consists of a vertex set $V(G) = \{v_1,v_2,\ldots,v_n\}$
and an edge set $E(G) = \{e_1,e_2,\ldots,e_m\}$,
where each edge is an unordered pair of distinct vertices.
Since $E(G)$ is a set, in a simple graph no edge
is repeated. A \textit{multigraph}\index{multigraph}
is obtained if we allow repeated edges 
(then $E(G)$ is a multiset) and edges of the
form $\{u,u\}$, called a \textit{loop}\index{loop in a multigraph}.
With the exception of some constructions in Chapter 2 
we consider only simple graphs, 
and for brevity the term \textit{graph}\index{graph} 
will mean simple graph.
Often we denote an edge $\{u,v\}$ by
$uv$ or $vu$. 

We denote the cardinality \index{cardinality of a set}
of any set $S$ by $|S|$. The cardinalities $|V(G)|$ and $|E(G)|$ 
are called the \index{order of a graph} 
\emph{order} and \emph{size} \index{size of a graph} of the graph $G$ respectively.

An edge $e \in E(G)$ is \emph{incident}\index{incident with a vertex} 
with a vertex $v \in V(G)$
if $v \in e$, and the 
\emph{degree}\index{degree of a vertex} of a vertex $v$,
denoted by $\deg(v)$, is the number of edges incident with $v$.  
The \textit{maximum degree}\index{maximum degree}, denoted by $\Delta(G)$,
and the \textit{minimum degree}, 
denoted by $\delta(G)$\index{minimum degree}, of a graph $G$ 
are, respectively, the maximum and minimum degrees of the
vertices of $G$.
If $uv \in E(G)$ 
%%%%%%%%%%%%%%%%%%%%%is an edge of $G$ 
then we say that the vertices
$u$ and $v$ are adjacent\index{adjacent vertices}, or that $u$
is a \textit{neighbour}\index{neighbour of a vertex}
of $v$.

A \emph{trail}\index{trail} is a sequence
$v_0,e_1,v_1,e_2,\ldots,e_k,v_k$ of vertices and edges
such that $e_i = v_{i-1}v_i$ for all $i$ and no edge
is repeated. A \emph{path}\index{path} is a trail with no repeated
vertex. We usually denote a path $P$ by listing the vertices
in the path as follows,\   $P: v_0,v_1,v_2,\ldots,v_k$.
The vertices $v_0$ and $v_k$ are called
\emph{endvertices}\index{endvertices of a path} of $P$,
and the vertices of $P$ which are not endvertices
are called \textit{internal}\index{internal vertices of a path}
vertices of $P$. The size of a path $P$ is also
called the 
\textit{length}\index{length of a path} of $P$.

If $u$, $v$ are endvertices of a path $P$
we say that $u$ is 
\textit{joined}\index{joined by a path}
to $v$ by $P$, or $u$ and $v$ are joined
by $P$. A graph $G$ 
is \textit{connected}\index{connected graph}
if any two vertices $u$, $v$ in $V(G)$ are joined
by a path in $G$. 

If $Q: w_0,w_1,\ldots,w_m$ 
and $P:v_0,v_1,\ldots,v_k$ are vertex disjoint paths
with $w_i$ adjacent to $v_0$ and $w_j$
adjacent to $v_k$ ,where $j > i$, 
then $w_0,w_1,\ldots,w_i,P,w_j,\ldots,w_m$ denotes
the path\index{path!notation for a path}
$w_0,w_1,\ldots,w_i,v_0,v_1,\ldots,v_k,w_j,\ldots,w_m$.

If $u,v$ are two vertices of $G$ then
$\tau_{G}(u,v)$ denotes the order of 
a longest path\index{longest path notation}
in $G$ with
endvertices $u$ and $v$. 
The \textit{detour order}\index{detour order of a vertex}
of a vertex
$v\in V(G)$ is the order of a longest path
$P$ in $G$ having $v$ as an initial vertex. The detour order
of $v$ is denoted by $\tau_{G}(v)$. 
If there is no danger of
confusion then we simply write $\tau(v)$ and $\tau(u,v)$ 
instead of $\tau_{G}(v)$ and $\tau_{G}(u, v)$ respectively.
See, for example, Figure~\ref{ex-det-seq}.
\begin{figure}[H]
\begin{center}
\psfrag{a}[c][c]{$v_3$}
\psfrag{b}[c][c]{$v_1$}
\psfrag{c}[c][c]{$v_5$}
\psfrag{d}[c][c]{$v_4$}
\psfrag{e}[c][c]{$v_2$}
\psfrag{f}[c][c]{$v_6$}
\psfrag{g}[c][c]{$\tau(v_3)=6$}
\psfrag{h}[c][c]{$\tau(v_1)=5$}
\psfrag{i}[c][c]{$\tau(v_5) =6$}
\psfrag{j}[c][c]{$\tau(v_4)=6$}
\psfrag{k}[c][c]{$\tau(v_2) =5$}
\psfrag{l}[c][c]{$\tau(v_6) =6$}
\epsfig{file=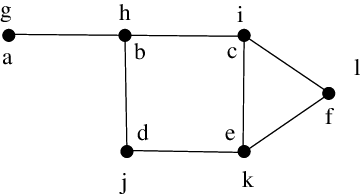}
\end{center}
\caption{\label{ex-det-seq} Detour orders of vertices}
\end{figure}
The detour order of a graph $G$,\index{detour order of a graph} 
which we will denote
by $\tau(G)$, is the maximum of the detour orders of
its vertices. In other words, $\tau(G)$ is the order
of a longest path in $G$. A longest path in a graph
$G$ is called a \textit{detour}\index{detour in a graph}. 

The difference $|V(G)|-\tau(G)$ is called the
\textit{detour deficiency}\index{detour deficiency of a graph} of $G$.
If we label the vertices of $G$ by $v_1,v_2,\ldots,v_n$ 
such that 
$\tau_{G}(v_1) \le \tau_{G}(v_2) \le \tau_G(v_3) \le \ldots \le  \tau_{G}(v_n)$
then the nondecreasing sequence
$\tau_{G}(v_1),\tau_{G}(v_2), \dots, \tau_{G}(v_n)$ is called
the \textit{detour sequence}\index{detour sequence of a graph} of $G$.
A nondecreasing sequence $D$ is called
a \textit{detour sequence} if there exists a graph
whose detour sequence is $D$.
For example, the detour sequence of the graph shown in
Figure~\ref{ex-det-seq} is $5, 5, 6, 6, 6, 6$ since
$\tau(v_1) = \tau(v_2) = 5$
and $\tau(v_3) = \tau(v_4) = \tau(v_5) =\tau(v_6) = 6$.

A \textit{cycle}\index{cycle} of order $n$, denoted by $C_n$,
consists of $n$ vertices $v_1,v_2,\ldots,v_n$ such that $v_i$ is
adjacent to $v_{i+1}$ for $i=1,2,\ldots,n-1$ and
$v_n$ is adjacent to $v_1$.  
The \emph{girth}\index{girth of a graph} $g(G)$ 
and the \emph{circumference}\index{circumference of a graph}
$c(G)$ are, respectively, the order of a 
shortest and a longest cycle in $G$.

Two graphs $G$ and $H$ are 
\textit{isomorphic}\index{isomorphic graphs} if there exists
a bijection $\psi : V(G) \rightarrow V(H)$ 
such that $uv \in E(G)$ if and only if $\psi(u)\psi(v) \in E(H)$.
A graph which is isomorphic to a graph $G$ is called
a \textit{copy}\index{copy of a graph} of $G$.
A graph $H$ is a \textit{subgraph}\index{subgraph} of a graph
$G$ if $V(H) \subseteq V(G)$ and $E(H) \subseteq E(G)$.
A graph that is a copy of a subgraph of $G$ will also
be called a subgraph of $G$. If $H$ is a
subgraph of $G$ we write $H \subseteq G$.

A subgraph of $G$ having
the same order as $G$ is called a 
\textit{spanning subgraph}\index{spanning subgraph}
of $G$. If $U \subseteq V(G)$ then the subgraph of $G$
\textit{induced}\index{induced subgraph} by $U$ has
vertex set $U$ and 
edge set $\{uv | u \in U, v\in U, uv \in E(G)\}$.
We denote the subgraph of $G$ induced by $U \subseteq V(G)$
by $G\langle U \rangle$. If no confusion can
result we simply write $\langle U \rangle$. 
If $H \subseteq V(G)$ then
we write $G - H$ for the graph $G\langle V(G)\setminus H \rangle$,  
and if $H =\{w\}$ we write $G -w$ instead of
$G - \{w\}$. If $u \in V(G)$ and $v \in V(G)$,
but $uv \notin E(G)$, then $G+ uv$ is the
graph with vertex set $V(G)$ and edge
set $E(G) \cup \{uv\}$.  Similarly, if $uv \in E(G)$,
then $G - uv$ is the graph with
vertex set $V(G)$ and edge set $E(G) \setminus \{uv\}$.

A graph $G$ is \textit{complete}\index{complete graph}
if $uv \in E(G)$ for all distinct vertices $u$, $v$
in $V(G)$. The complete graph of order $n$ is
denoted by $K_n$.

The \textit{complement}\index{complement of a graph}
of a graph $G$, denoted by $\overline{G}$,
is the graph with vertex set $V(\overline{G}) = V(G)$
and $uv \in E(\overline{G})$ if and only if
$u \in V(G)$, $v \in V(G)$ and $uv \notin E(G)$.
For example, $|V(\overline{K_n})| =n$ and
$E(\overline{K_n}) = \emptyset$.

Other definitions will be given where they are needed.
For any concept not defined here
we use the definition given in 
Chartrand and Lesniak~\cite{cl}.

In Section~\ref{background} we mention some early 
papers on detours.  In Section~\ref{prop-det-seq}
we derive some properties of detour sequences, and
in Section~\ref{graph-type} we determine the detour
sequences of some graphs. We conclude this chapter
by deriving an upper bound for the 
n\textit{th} detour
chromatic number of a graph $G$ in terms of $\tau(G)$. 

\section{Background}\label{background}
The concept of 
the detour order of a vertex
appeared probably for the first time (with a different notation) 
in Problem~46 (formulated by L. Lov\'{a}sz) of
\cite{erka68} on page 366. Also, a longest path 
in a graph
was called a detour path by
Kapoor, Kronk and Lick~\cite{kapoor}, and the 
length of such a path
the detour number\index{detour number} of the graph.
They showed, amongst other results,
that if $G$ is a connected graph of order
$n$ then $\tau(G) \ge 1+\min\{n-1, 2\delta(G)\}$.
%They determined lower bounds for the detour
%number, amongst other results.
Banerjee and Harary~\cite{banerjee} determined
the minimum detour number among all
connected graphs of given order $n$ and size $m$.

In \cite{bomi87} Borowiecki and Mih\'{o}k
posed the problem of characterising
detour sequences, that is,
to find necessary and sufficient conditions for a 
given sequence of positive integers to be 
the detour sequence of some graph. They referred
to a detour sequence as a longest path 
degree sequence\index{longest path degree
sequence}.  The degree of a vertex 
may be regarded as the  number of paths
of order 2 starting at that vertex. Instead of number of paths,
we can consider the order of the 
longest path starting at the vertex.
Thus characterising longest path degree
sequences is analogous to the problem of characterising 
degree sequences\index{degree sequence of a graph} of graphs,
studied, for example, in \cite{erga60,ha62,ha55}.

The detour sequences of trees\index{detour sequence of trees}
have been characterised
by Lesniak~\cite{lesniak}, and Dobrynin and 
Me{l\kern-.15em'\kern-.09em}nikov~\cite{dome02} have
characterised the detour sequences of
several families of cubic graphs,\index{detour sequence of cubic graphs}
and have listed the detour sequences of
all connected cubic graphs with at most 
20 vertices.

The invariant $\tau(G)$ is hereditary, and it can be used
to define an interesting additive hereditary property 
\index{hereditary property of graphs}
of graphs (for details see \cite{bobr97}). 
In \cite{bufr02} the relationship between  $\tau(G)$ and
the detour chromatic number\index{detour chromatic number}
of $G$ is described. We discuss this relationship
in Section~\ref{det-chrom-num} of this chapter.
It is also shown in \cite{bufr02}
that the detour
chromatic number is related to the Path Partition
Conjecture. \index{Path partition conjecture}
This conjecture
states that, for any graph $G$,
if $\tau(G) = a+b$ then there exists a partition
$V_1$, $V_2$ of $V(G)$ such that
$\tau(G\langle V_1 \rangle) \le a$ and
$\tau(G\langle V_2 \rangle) \le b$.
This conjecture is discussed in \cite{fr02}
and \cite{dfb}.

\section{Properties of detour sequences}\label{prop-det-seq}
In this section we derive some
properties of detour sequences. 
We will investigate repetitions
in detour sequences, and whether or not all
detour orders between the least and greatest occur
in a detour sequence. We make these ideas precise
in the next two definitions.
\begin{definition}
Let $D$ be the detour sequence 
$d_1, d_2, d_3,\ldots,d_n$ of a graph $G$. 
A repetition\index{repetition in a detour sequence} in $D$ is a subsequence
of $D$ of the form 
$d_i, d_{i+1}, d_{i+2},\ldots,d_{i+m}$
such that
\begin{enumerate}
\item $m \geq 1$, $1 < i+m \leq n$ and
$d_i =d_{i+1} = d_{i+2}= \ldots =d_{i+m}$. 
\item If $1 \le j < i$ then $d_j < d_i$
and if $i+m < k \leq n$ then $d_k > d_i$.
\end{enumerate}
The length of the repetition\index{length of repetition}
is $m+1$.
\end{definition}
For example, the detour sequence of the graph
shown in Figure~\ref{ex-det-seq} has
two repetitions, namely
$5, 5$ of length $2$
and $6, 6, 6, 6$ of length $4$.
We denote a repetition of length $k$
which has the form
$n,n,n,\ldots,n,n$ ($n$ repeated $k$ times) 
by $(n)_k$. It will sometimes be
convenient to denote a single term
$n$ in a detour sequence also by $(n)_1$.
Thus, for example, 
the above detour sequence $5, 5, 6, 6, 6, 6$
can be written $(5)_2, (6)_4$.  Another simple
but useful example is the detour
sequence of a path of order $n$. This is \linebreak
$(a)_k,(a+1)_2,(a+2)_2,\ldots,(n-1)_2,(n)_2$
where $a =\lceil (n+1)/2 \rceil$ and $k=1$ for odd
$n$ and $k =2$ for even $n$.
\begin{definition}
Let $D:\ d_1, d_2, d_3,\ldots,d_{n-1},d_n$ be
the detour sequence of a graph $G$. 
If, for each positive integer $k$ with $d_1 \leq k \leq d_n$,
there exists a term $d_j$ in $D$ 
such that $d_j = k$, then
we say that the detour sequence of $G$ is \textit{full}.
\index{full detour sequence}
Equivalently, $D$ is full if and only if
$0 \leq d_{i} - d_{i-1} \leq 1$ for all $i$ 
such that $2 \leq i \leq n$. 
\end{definition}
For detour sequences which are not full,
the following terminology will be useful.
\begin{definition}
Let $D:\ d_1, d_2, d_3,\ldots,d_n$ be the 
detour sequence of a graph $G$.  Then
$\max_{2 \leq i \leq n}\{d_i -d_{i-1}\}$
is called the \textit{maximum gap}\index{gap of a detour sequence}
of $D$.
\end{definition}
The maximum gap of a detour sequence $D$ is denoted
by $\gap (D)$. 
Clearly a detour sequence is full if
and only if its maximum gap is equal to one or zero.
A detour sequence whose maximum gap is zero is
called a \textit{constant}\index{constant detour sequence}
detour sequence. Examples of graphs with constant
detour sequences are hamiltonian graphs. (A 
\textit{hamiltonian graph} is a graph which
has a spanning cycle.)
These graphs, and all previously studied connected graphs
$G$ with a constant detour sequence, satisfy
$\tau(G) = |V(G)|$.
In Chapter 2 we construct connected graphs $G$ 
with $\tau(G) < |V(G)|$ which have 
a constant detour sequence.

The corollary to Lemma~\ref{distinct}
gives an upper bound for the maximum gap
of the detour sequence of a connected graph.
\begin{lemma}\label{distinct}
Let $G$ be  connected graph with $v \in V(G)$. 
Then 
\begin{enumerate}
\item $\left\lceil \frac{\tau(G)+1}{2} \right\rceil 
\le \tau(v) \le \tau(G)$. 
\item There are at most 
$\left\lceil \frac{\tau(G)}{2} \right\rceil$ distinct terms
in the detour sequence of $G$. 
\end{enumerate}
\end{lemma}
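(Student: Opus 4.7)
The plan is to prove part 1 first, after which part 2 falls out as an elementary counting corollary. The upper bound $\tau(v) \le \tau(G)$ in part 1 is immediate from the definition of $\tau(G)$ as the maximum of $\tau(u)$ over $u \in V(G)$, so the real content is the lower bound.

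For the lower bound in part 1, I would fix a detour $Q: w_1, w_2, \ldots, w_n$ of $G$ (so $n = \tau(G)$) and split into two cases. If $v = w_i$ lies on $Q$, then the two sub-paths $w_i, w_{i-1}, \ldots, w_1$ and $w_i, w_{i+1}, \ldots, w_n$ are both paths with endvertex $v$, of orders $i$ and $n - i + 1$; these sum to $n + 1$, so the longer of the two already has at least $\lceil (n+1)/2 \rceil$ vertices, giving $\tau(v) \ge \lceil (\tau(G)+1)/2 \rceil$. If $v \notin V(Q)$, then by connectedness of $G$ I pick a shortest path $R$ from $v$ to $V(Q)$, meeting $Q$ only at its terminal vertex, say $w_i$. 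Concatenating $R$ with either sub-path of $Q$ starting at $w_i$ yields a genuine path with endvertex $v$ (no vertex is repeated, precisely because $R$ is shortest and so its internal vertices avoid $V(Q)$), and the longer such concatenation has at least $1 + \lceil (n+1)/2 \rceil$ vertices, comfortably beating the required bound.

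For part 2, part 1 forces every entry of the detour sequence into the integer interval $\bigl[\lceil (\tau(G)+1)/2 \rceil,\ \tau(G)\bigr]$; splitting on the parity of $\tau(G)$ shows this interval contains exactly $\lceil \tau(G)/2 \rceil$ integers, which bounds the number of distinct values. The only step I expect to require any care is the off-detour case in part 1, specifically the verification that the concatenation of $R$ with a sub-path of $Q$ is a legitimate path rather than a mere walk. This is exactly what the ``shortest path from $v$ to $V(Q)$'' device is designed to handle, and the remainder of the argument is bookkeeping.
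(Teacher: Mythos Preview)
Your proposal is correct and follows essentially the same approach as the paper: fix a detour, split on whether $v$ lies on it, and in the off-detour case use a path from $v$ to the detour meeting it in a single vertex, then deduce part~2 by counting integers in the resulting interval. Your version is slightly more explicit (the pigeonhole for the two sub-paths, the shortest-path device to guarantee the concatenation is a genuine path), but the underlying argument is identical to the paper's.
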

\begin{proof}
Let $k =\tau(G)$. 
Let $L$ be a path of order $k$ in $G$.
Let $v \in V(G)$. If $v \in L$ then
clearly the detour order of $v$ is at least 
$\left\lceil (k+1)/2 \right\rceil$. If $v \notin L$ then,
since $G$ is connected, there is a path 
$P$ joining $v$ to some vertex $u \in L$
such that $V(P) \cap V(L) = \{u \}$.
Since the detour order of $u$ is at least
$\left\lceil (k+1)/2 \right\rceil$ it follows that the
same result holds for $v$.  Thus the detour
order of any vertex of $G$ lies between
$\left\lceil (k+1)/2 \right\rceil$ and $k$, which
proves part (1). Part (2) follows immediately,
since 
\begin{align}
k - \left\lceil\frac{k+1}{2} \right\rceil +1
= \left\lceil\frac{k}{2} \right\rceil.  \tag*{\qd}
\end{align}
\end{proof}
\begin{corollary}\label{bound-gap}
Let $G$ be a connected graph with
detour sequence $D$. Then
$\gap (D) \le \left\lfloor \frac{\tau(G)}{2} \right\rfloor$.
\end{corollary}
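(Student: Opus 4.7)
The plan is to deduce this immediately from part (1) of Lemma~\ref{distinct}. Let $k = \tau(G)$ and write $D:\ d_1, d_2, \ldots, d_n$. Part~(1) tells us that every term satisfies $\lceil (k+1)/2 \rceil \le d_i \le k$, so the entire sequence sits inside an integer interval of length $k - \lceil (k+1)/2 \rceil$. Consequently, the largest possible jump between consecutive terms is bounded by this length:
\begin{equation*}
\gap(D) \;=\; \max_{2 \le i \le n}\{d_i - d_{i-1}\} \;\le\; k - \left\lceil \frac{k+1}{2} \right\rceil.
\end{equation*}

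The only remaining step is a small case analysis on the parity of $k$ to identify the right-hand side with $\lfloor k/2 \rfloor$. If $k = 2m$, then $k - \lceil (k+1)/2 \rceil = 2m - (m+1) = m-1 \le m = \lfloor k/2 \rfloor$; if $k = 2m+1$, then $k - \lceil (k+1)/2 \rceil = (2m+1) - (m+1) = m = \lfloor k/2 \rfloor$. In both cases we obtain $\gap(D) \le \lfloor \tau(G)/2 \rfloor$, as required.

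There is no real obstacle here; connectedness is already used inside Lemma~\ref{distinct}, so the corollary is a one-line consequence plus a ceiling/floor identity. If one wanted to be slick one could avoid the case split by noting $\lceil (k+1)/2 \rceil = \lfloor k/2 \rfloor + 1$ for every integer $k$, which gives $k - \lceil (k+1)/2 \rceil = k - \lfloor k/2 \rfloor - 1 = \lceil k/2 \rceil - 1 \le \lfloor k/2 \rfloor$ directly. Either presentation yields a short, self-contained proof built entirely on part~(1) of the lemma.
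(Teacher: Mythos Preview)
Your proof is correct and follows essentially the same approach as the paper: both invoke part~(1) of Lemma~\ref{distinct} to trap every detour order in a short interval and then read off the gap bound by a ceiling/floor identity. The only cosmetic difference is that the paper first weakens the lower bound $\lceil(\tau(G)+1)/2\rceil$ to $\lceil\tau(G)/2\rceil$ and then uses the exact identity $\tau(G)-\lceil\tau(G)/2\rceil=\lfloor\tau(G)/2\rfloor$, whereas you keep the lemma's sharper lower bound and show the resulting quantity is $\le \lfloor\tau(G)/2\rfloor$ by a parity check.
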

\begin{proof} 
By Lemma~\ref{distinct}, if $v \in V(G)$ then
\[
\left\lceil \frac{\tau(G)}{2} \right\rceil 
\le \tau(v) \le \tau(G). 
\]
Hence
\begin{align}
\gap (D) \le  \tau(G) 
- \left\lceil\frac{\tau(G)}{2} \right\rceil
= \left\lfloor\frac{\tau(G)}{2} \right\rfloor. \tag*{\qd}
\end{align}
\end{proof}
\begin{remark}\label{at-most-two}
If $G$ is a connected graph and
$\tau(G) = k$ is even 
there are at most two vertices
in $G$ with detour order $(k+2)/2$, and if $k$ is odd
there is at most one vertex in $G$ with detour order
$(k+1)/2$. Also the greatest term in 
any  detour sequence (with more than one term) 
belongs to a repetition of length at least two,
since a detour (of order greater than one)
has two endvertices.
\end{remark}

We can construct graphs which
realise the upper bound for the maximum gap 
given in Corollary~\ref{bound-gap}.
Also, for any non-negative integer $k$,
there exist graphs whose detour sequence
has maximum gap equal to $k$.
For example, let $G_{n,m}$ be a graph such that:
\begin{enumerate}
\item The vertex set $V(G_{n,m}) = A \cup B$, where
$|A| = n$, $|B| = m$ 
$(1 <  m \leq n)$ and $A \cap B = \{w\}$.
\item $G_{n,m}\langle A \rangle = K_n$
and $G_{n,m}\langle B \rangle = K_m$.
\item If $u \in A\setminus\{w\}$ and 
$v \in B\setminus \{w\}$ then $uv \notin E(G_{n,m})$.
\end{enumerate}
The graph $G_{n,m}$ is depicted in
Figure~\ref{pict}.
\begin{figure}[H]
\begin{center}
\psfrag{a}[c][c]{$K_n$}
\psfrag{b}[c][c]{$K_m$}
\psfrag{c}[c][c]{$w$}
\epsfig{file=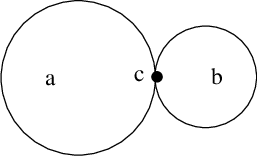}
\end{center}
\caption{\label{pict} The graph $G_{n,m}$}
\end{figure}
Clearly, if $v \in V(G_{n,m}) \setminus \{w\}$ we have
$\tau_{G_{n,m}}(v) = \tau(G_{n,m}) = n+m -1$ and
$\tau_{G_{n,m}}(w) = n$. Therefore, the
detour sequence $D_{n,m}$ 
of $G_{n,m}$ is $n,(n+m-1)_{(n+m-2)}$. 
Hence $\gap (D_{n,m}) = m-1$.
If $n = m$ then $\tau(G_{n,n}) =2n-1$
and we get
\[
\gap (D_{n,n}) = n-1 = 
\left\lfloor \frac {\tau(G_{n,n})}{2} \right \rfloor.
\]

We have the following lower bound for the 
length of a 
repetition\index{length of a repetition in a detour\\ sequence} 
in the detour sequence of a graph:
 
\begin{theorem}\label{rep-length}
If $G$ is connected and $|V(G)| = n > 1$ 
then the detour sequence of $G$
has a repetition of length at least 
$\left\lceil 2n/\tau(G) \right\rceil$.
\end{theorem}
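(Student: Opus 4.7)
My plan is to apply pigeonhole to the detour orders of the vertices, using the bound from Lemma~\ref{distinct} on the number of distinct detour orders. Set $k=\tau(G)$, so the $n$ vertices fall among at most $\lceil k/2\rceil$ possible detour order values; hence some value is attained by at least $\lceil n/\lceil k/2\rceil\rceil$ vertices. Because the detour sequence is non-decreasing, all vertices sharing a given detour order occupy consecutive positions, so such a block is a repetition (in the sense of the definition) of length equal to the multiplicity. Since $\tau(G)\le n$ we have $\lceil 2n/\tau(G)\rceil\ge 2$, so whatever the pigeonhole produces is automatically a block of length at least $2$, i.e.\ a bona fide repetition.

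If $k$ is even, then $\lceil k/2\rceil=k/2$ and the pigeonhole bound $\lceil n/(k/2)\rceil=\lceil 2n/k\rceil$ is exactly what is required. The case $k$ odd is the main obstacle: direct pigeonhole gives only $\lceil 2n/(k+1)\rceil$, which can be strictly less than $\lceil 2n/k\rceil$ (for instance $n=5$, $k=3$ gives $3$ versus $4$). To recover the sharper estimate I would invoke Remark~\ref{at-most-two}, which says that for odd $k$ at most one vertex of $G$ has detour order $(k+1)/2$. Setting that vertex aside (if it exists), at least $n-1$ vertices have detour orders distributed over the $(k-1)/2$ values in $\{(k+3)/2,\ldots,k\}$, and pigeonhole on this restricted distribution produces a value attained by at least $\lceil 2(n-1)/(k-1)\rceil$ vertices, yielding a repetition of at least that length.

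It then remains to verify $\lceil 2(n-1)/(k-1)\rceil\ge\lceil 2n/k\rceil$ for odd $k$. Since the ceiling is monotone, it suffices to show $2(n-1)/(k-1)\ge 2n/k$, which cross-multiplies to $k(n-1)\ge n(k-1)$, equivalently $n\ge k$; and this holds because $\tau(G)\le|V(G)|$. Combining the two parity cases gives the claimed lower bound $\lceil 2n/\tau(G)\rceil$ for the length of a repetition.
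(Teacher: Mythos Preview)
Your proof is correct and follows essentially the same approach as the paper: pigeonhole on the detour orders using Lemma~\ref{distinct}, with the odd case handled via Remark~\ref{at-most-two} by setting aside the (at most one) vertex of detour order $(k+1)/2$ and applying pigeonhole to the remaining $n-1$ vertices over $(k-1)/2$ values. Your write-up is in fact slightly tidier than the paper's, since you collapse the paper's two odd-$k$ subcases into one and you supply the verification of $\lceil 2(n-1)/(k-1)\rceil\ge\lceil 2n/k\rceil$ explicitly.
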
\pagebreak
%Uses: If $n > k$ objects are placed in $k$
% boxes, then some box has at least
% $\lceil n/k \rceil$ objects in it.
\begin{proof}
Let $\tau(G) = k$.
\begin{enumerate}
\item[(i)] If $k$ is even, by Lemma~\ref{distinct} there
are at most $k/2$ distinct elements in the
detour sequence of $G$.
%(Thinking of distinct elements as boxes
%and vertices as objects). So here we seem
%to be discussing the length of a repetition.
Hence some detour order is repeated at
least 
$\left\lceil \frac{n}{k/2} \right\rceil 
= \left\lceil \frac{2n}{k} \right\rceil$
times in the detour sequence of $G$.

\item[(ii)] If $k$ is odd, by Lemma~\ref{distinct}
there are at most
$\frac{k+1}{2}$ distinct elements in the detour sequence
of $G$. However, the detour order $\frac{k+1}{2}$
can occur at most once in the detour sequence of $G$.
If it does not occur, then we have $n$ vertices and
at most $\frac{k+1}{2} -1$ distinct elements in the detour
sequence. So in this case some detour order
must be repeated at least $\left\lceil \frac{2n}{k-1} \right\rceil$
times. Otherwise, by omitting the vertex with 
detour order $\frac{k+1}{2}$, we get $n-1$ vertices with
at most $\frac{k-1}{2}$ distinct detour orders. Hence some detour
order must be repeated at least 
$\left\lceil \frac{2(n-1)}{k-1}\right\rceil$
times. Since 
\[
\left\lceil \frac{2n}{k-1}\right\rceil \ge
\left\lceil \frac{2(n-1)}{k-1}\right\rceil \ge     
\left\lceil \frac{2n}{k}\right\rceil
\]
the result follows. \hspace*{\fill}\qd
\end{enumerate}
\end{proof}

%arbitarily large
%repetitions relative to order of graph - this will 
%also motivate constant detour graphs. Detour sequences
%of various classes of graphs.
%Unicyclic graphs, trees, complete multipartite graphs.
%Which sequences cannot be detour sequences? If they
%do not have a vertex repeated at least three times must be a path. 
%Gives a simple necessary condition. So for example
%3, 2, 2 is not a detour sequence.

The length of a repetition in $G$ is
obviously bounded above by $|V(G)|$.
Of course, 
all hamiltonian 
%%%%%%%%%%%%%%%%%%%%%%%graphsfor example, cycles and complete
graphs have repetitions of length $|V(G)|$.
In Chapter 2 we give non-trivial examples 
where repetitions of
length $|V(G)|$ occur. 

%Since obviously for any graph $G$ 
%we have $\tau(G) \leq |V(G)|$
It follows from Theorem~\ref{rep-length}
(and from Remark~\ref{at-most-two}) 
that the detour sequence of any
connected graph, except $K_1$, has a repetition of 
length at least two.
In Theorem~\ref{rep3} we  show that paths are 
the only connected graphs
whose detour sequences do not have 
a repetition with length greater
than two. First we prove a lemma.

\begin{lemma}\label{repetition}
Let $P: v_1,v_2,v_3,\ldots,v_n$, where $n \geq 3$,
be a path.
If $e \in E(\overline{P})$ then the detour
sequence of $P+e$ has a repetition of length
at least three.
\end{lemma}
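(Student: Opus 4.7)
The plan is to write $e = v_iv_j$ with $1 \le i < j \le n$, $j \ge i+2$, and split on whether $e$ is incident to an endpoint of $P$. In the easy case $i = 1$ (the case $j = n$ is symmetric), the sequence $v_{j-1}, v_{j-2}, \ldots, v_1, v_j, v_{j+1}, \ldots, v_n$ is a Hamilton path in $P+e$ with endvertices $v_{j-1}$ and $v_n$, so $\tau(v_{j-1}) = \tau(v_n) = n$. Together with $\tau(v_1) = n$ (from $P$ itself), this produces three distinct vertices of detour order $n$, hence a repetition of length at least three.

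In the remaining case $2 \le i$ and $j \le n-1$, by the symmetry $v_k \mapsto v_{n+1-k}$ I may assume $i+j \le n+1$, which combined with $j \ge i+2$ forces $i \le (n-1)/2$. I will exhibit three distinct vertices with common detour order $n-i+1$, namely $v_i$, $v_{j-1}$, and $v_{n-i+1}$. The vertex $v_{n-i+1}$ lies in the right tail $\{v_j, v_{j+1}, \ldots, v_n\}$ (since $n-i+1 \ge j$), and for such tail vertices I will show that no chord-using path exceeds the length of the corresponding straight subpath of $P$, so $\tau(v_{n-i+1}) = \tau_P(v_{n-i+1}) = n-i+1$. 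For $v_i$, any chord-using path begins $v_i, v_j, \ldots$ and therefore contains at most $\max(n-j+2,\,j-i+1) \le n-i+1$ vertices, whence $\tau(v_i) = n-i+1$ as well. For $v_{j-1}$ the chord is essential: the path $v_{j-1}, v_{j-2}, \ldots, v_i, v_j, v_{j+1}, \ldots, v_n$ has $(j-i)+(n-j+1) = n-i+1$ vertices, establishing the lower bound. Distinctness of the three vertices follows from $j \ge i+2$, $i+j \le n+1$, and $2i \le n-1$.

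The main obstacle will be proving the matching upper bound $\tau(v_{j-1}) \le n-i+1$. There are only a few essentially distinct types of paths with $v_{j-1}$ as endvertex (differing in whether they use the chord, which tail they end in, and which arc of the cycle they traverse), so I will enumerate them; the resulting lengths are $j-1$, $n-j+2$, $i+2$, $j-i+1$, and $n-i+1$. Each of the first four is at most $n-i+1$ under the standing hypotheses $j \ge i+2$, $j \le n$, and $i+j \le n+1$, so the maximum is exactly $n-i+1$, completing the argument.
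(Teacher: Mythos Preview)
Your proof is correct. The enumeration of maximal paths from $v_{j-1}$ is complete (first move to $v_{j-2}$ or to $v_j$, and in each branch the continuation is forced until the next cycle vertex $v_i$ or $v_j$, where there is again a binary choice), and your inequalities dispatching the four competing lengths are all valid under the standing assumptions $j\ge i+2$, $j\le n$, $i+j\le n+1$, and $2i\le n-1$. Distinctness of $v_i$, $v_{j-1}$, $v_{n-i+1}$ is also correctly argued.

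Your route, however, is quite different from the paper's. The paper does not compute any detour orders in $P+e$ exactly. Instead it exploits the structure of the detour sequence of $P$: it is full, with every value except possibly the minimum repeated exactly twice. Since adding an edge can only increase detour orders, it then suffices to show that \emph{some} vertex has a strictly larger detour order in $P+e$ than in $P$; a simple multiset/pigeonhole argument on the fixed range $\bigl[\lceil (n+1)/2\rceil,\,n\bigr]$ then forces a triple. The paper's two cases (the centre $v_k$ lies between $v_i$ and $v_j$, or not) each require exhibiting only a single improved path, so the case analysis is lighter than yours. On the other hand, your argument is entirely self-contained, avoids the pigeonhole step, and actually identifies the repeated value ($n$ in the boundary case, $n-i+1$ otherwise) together with three explicit witnesses, which is more informative.
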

\begin{proof}
Let $k = \left\lceil \frac{n}{2} \right \rceil$.
Then $v_k$ is a vertex with the least detour
order in $P$. In $P$, 
apart possibly from the
detour order of $v_k$, every detour order is repeated
exactly twice, and every detour order 
between $\tau(v_k)$ and $|V(P)|$ occurs 
in the detour sequence of $P$.
Since adding an edge $e$ does not result in a decreased
detour order for any vertex of $P$, it is sufficient
to show that some vertex of $P$ has a larger detour order
in $P+e$ than it has in $P$.  
Let $e = v_iv_j$ where $i+1 < j \leq n$
and $1 \leq i \leq n-2$. Let $G =P +e$.
We consider two cases, depending on whether or not
$v_k$ lies between $v_i$ and $v_j$. 
\begin{enumerate}
\item Assume without loss of generality
that  $k \leq i$. Then
$\tau_P(v_{i+1}) = i+1$. (Note that,
if $k = i$ and $n$ is even, then
$\tau_P(v_{i+1}) = \tau_P(v_i) = i+1$.)
In $G$ we have the
path $v_{i+1},v_{i+2},\dots,v_j,v_i,v_{i-1},\ldots,v_1$
of order $j > i+1$. 
Hence $\tau_G(v_{i+1}) \ge j > i+1 = \tau_P(v_{i+1})$.

%Similarly, if  $k \geq j$ we get 
%$\tau_G(v_{j-1})) > \tau_P(v_{j-1})$. 
%Then
%$\tau_P(v_{j-1}) = n-j+2$. In $G$ we have the path
%$v_{j-1}, v_{j-2},\ldots,v_i,v_j,v_{j+1},\ldots,v_n$
%of order $n-i+1$. 
%Hence $\tau_G(v_{j-1})) \ge n-i+1 > n-j+2 = \tau_P(v_{j-1})$.

\item  Suppose that $i < k < j$. 
Then 
$\tau_P(v_k) = n-k+1$. Assume without
loss of generality that $k-i \ge j-k$.
The path
$v_k, v_{k-1},\ldots,v_i,v_j,v_{j+1},\ldots,v_n$
in $G$ has order $(k-i) + (n-j) +2$, while
$\tau_P(v_k) = (j-k) + (n-j) +1$.
Hence
\begin{align}
\tau_G(v_k)\geq (k-i)+(n-j) +2 > (j-k)+(n-j)+1
=\tau_P(v_k). \tag*{\qd}
\end{align}
\begin{comment}
It is convenient
to consider the cases $n$ odd and $n$ even
separately:
\begin{enumerate} 
\item Suppose that $n$ is odd. Then 
$\tau_P(v_k) = k = n-k+1$.
Suppose that $k-i \ge j-k$. The path
$v_k, v_{k-1},\ldots,v_i,v_j,v_{j+1},\ldots,v_n$
in $G$ has order $(k-i) + (n-j) +2$, while
$\tau_P(v_k) = (j-k) + (n-j) +1$.
Hence
\[
\tau_G(v_k)\geq (k-i)+(n-j) +2 > (j-k)+(n-j)+1
=\tau_P(v_k).
\]
Suppose that $j-k > k-i$. The path
$v_k,v_{k+1},\ldots,v_j,v_i,v_{i-1},\ldots,v_1$
in $G$ has order $j-k+1+i$. Hence
\[
\tau_G(v_k) \ge j-k+1+i > k+1 > \tau_P(v_k).
\]
\item Suppose that $n$ is even. Then
$\tau_P(v_k) = \tau_P(v_{k+1})= k+1 =n-k+1$. 
Suppose that $k-i \ge j-k$. The path
$v_k,v_{k-1},\ldots,v_i,v_j,v_{j+1},\ldots,v_n$
in $G$ has order $(k-i)+(n-j)+2$.  Hence
\[
\tau_G(v_k) \geq (k-i)+(n-j)+2 > (j-k)+(n-j)+1
= \tau_P(v_k).
\]
Suppose that $k-i < j-k$.  The path
$v_k,v_{k+1},\ldots,v_j,v_i,v_{i-1},\ldots,v_1$
in $G$ has order $j-k+1+i$. Hence
\begin{align}
\tau_G(v_k) \ge j-k+1+i > k+1 =\tau_P(v_k). \tag*{\qd}
\end{align}
\end{comment}
\end{enumerate}
\end{proof}

\begin{remark}\label{rem-rep}
It follows from  Lemma~\ref{repetition}
that all graphs obtained from a path $P$ by successively adding
edges will have a repetition of length at least three.
This follows since the detour sequence
of a path is full, and adding an edge never decreases the detour
order of a vertex, and we start with the 
detour order of every vertex of $P$ 
(except possibly for the vertex with least
detour order)
being repeated twice.
\end{remark}  
\index{repetition length of a path detour\\ sequence}
\begin{theorem}\label{rep3}
If $G$ is a connected graph that is not a path, 
then the detour sequence
of $G$ has a repetition of length at least three.
\end{theorem}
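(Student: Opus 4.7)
The plan is to split the argument into two cases according to whether $G$ is traceable. Write $n = |V(G)|$ and $k = \tau(G)$. The two cases are $k < n$ and $k = n$.

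In the non-traceable case $k < n$, the result falls straight out of Theorem~\ref{rep-length}: it guarantees a repetition of length at least $\left\lceil 2n/k \right\rceil$, and from $n \geq k+1$ I obtain
\[
\frac{2n}{k} \geq \frac{2(k+1)}{k} = 2 + \frac{2}{k} > 2,
\]
so $\left\lceil 2n/k \right\rceil \geq 3$. Nothing further is needed.

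In the traceable case $k = n$, I would let $P$ be a Hamiltonian path of $G$. Because $G$ is not itself a path, the set $E(G) \setminus E(P)$ is nonempty, so $G$ can be built up from $P$ by successively adding the edges in $E(G) \setminus E(P)$ one at a time. Remark~\ref{rem-rep}, obtained by iterating Lemma~\ref{repetition}, asserts that every graph formed in this way has a repetition of length at least three in its detour sequence, which completes the proof.

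I expect Case 2 to be the substantive step, since essentially all the work is hidden in Remark~\ref{rem-rep}. Making it rigorous means verifying that the length-three repetition guaranteed by the first edge addition (Lemma~\ref{repetition}) is not destroyed by the later additions. The key ingredients one would use are: the detour sequence of $P$ is full, with every value except possibly the smallest occurring exactly twice; and adding an edge cannot decrease any vertex's detour order. Together these force any vertex that migrates upward into an already-doubled bucket to produce a triple, while the two endvertices of $P$ keep the maximum-value bucket populated with at least two occurrences throughout. If I wanted to bypass the remark entirely, I could instead follow the case split in the proof of Lemma~\ref{repetition} applied to $P$ together with any single chord $e \in E(G) \setminus E(P)$, identifying explicitly a detour value achieved by three vertices in $G$.
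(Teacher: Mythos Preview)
Your proof is correct and essentially identical to the paper's: the same two-case split on whether $\tau(G) < |V(G)|$ or $\tau(G) = |V(G)|$, invoking Theorem~\ref{rep-length} in the first case and Remark~\ref{rem-rep} (built on Lemma~\ref{repetition}) in the second. Your extra commentary about what underlies Remark~\ref{rem-rep} is accurate but goes beyond what the paper spells out.
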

\begin{proof}
If $\tau(G) < |V(G)|$ then 
$\left\lceil \dfrac{2|V(G)|}{\tau (G)} \right\rceil \geq 3$. Hence
by Theorem~\ref{rep-length}
it follows that $G$ has a repetition of length at least three.

Suppose that $\tau(G) = |V(G)|$, and let 
$H$ be a detour in $G$.
Since $G$ can be obtained from $H$ by
succesively adding edges to $H$, it
follows from Remark~\ref{rem-rep}
that $G$ has a
repetition of length at least three. \hspace*{\fill}\qd
\end{proof}

It follows from Theorem~\ref{rep3}
that paths are uniquely determined by
their detour sequences. It is easy to
give examples of non-isomorpic graphs,
other than paths, which have 
identical detour sequences.
The tree and unicyclic graph (defined
in Section~\ref{sec-tree}) shown
in Figure~\ref{same-det} each have
detour sequence $3,(4)_2, (5)_3$.
\begin{figure}[H]
\begin{center}
\psfrag{a}[c][c]{5}
\psfrag{b}[c][c]{4}
\psfrag{c}[c][c]{3}
\epsfig{file=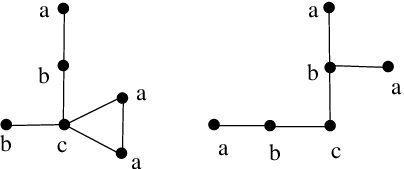}
\end{center}
\caption{\label{same-det} Graphs with identical detour sequences}
\end{figure}
The vertices in Figure~\ref{same-det}
are labelled with their detour orders.

Trees have the property that,
given any collection of
positive integers $k_1, k_2,\ldots,k_n$,
we can find a tree whose detour sequence
has repetitions of lengths 
$k_1, k_2,\ldots,k_n$.
This follows from
the chacterisation of their detour sequences
which we give in the next section.

\section{Detour sequences of types of graphs}\label{graph-type}
\label{sec-tree}
A \textit{tree}\index{tree}
is a connected graph with no cycles as 
subgraphs. We characterise the detour sequences
of trees\index{detour sequence of trees}
in Theorem~\ref{seq-tree}.
Our characterisation is the same as the characterisation
of eccentric sequences for trees 
obtained by Lesniak~\cite{lesniak}, but
our formulation and proof differs from that in \cite{lesniak}.
We use a proof based directly on detours.

\begin{theorem}\label{seq-tree}
A non-decreasing sequence is the detour sequence of a tree if and
only if it is $1$ or $2,2$ or has the form
\[
(a)_k,\  (a+1)_{k_1},\  (a+2)_{k_2}, \ldots, (m)_{k_l}
\]
where m is an integer with $m \geq 3$, 
$a= \left\lceil \dfrac{m+1}{2} \right\rceil$,
$l = m-a$, $k_j \ge 2$ for $1 \le j \le l$,
and $k=1$ if
$m$ is odd and $k=2$ if $m$ is even.
\end{theorem}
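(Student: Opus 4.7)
The plan is to split the proof into necessity and sufficiency, exploiting the fundamental feature of a tree that, because paths between any two vertices are unique, the longest path starting at a vertex $v$ must end at some vertex farthest from $v$ in $T$.

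For necessity, the cases $|V(T)|=1$ and $|V(T)|=2$ give the sequences $1$ and $2,2$ directly. Otherwise $|V(T)|\ge 3$, so $m:=\tau(T)\ge 3$. Fix a detour $P:\ u_0,u_1,\ldots,u_{m-1}$ in $T$ and compute $\tau_T(u_i)$ for each $0\le i\le m-1$. Because $P$ is a longest path, any branch of $T$ rooted at $u_i$ can contain a path of order at most $\min(i,m-1-i)+1$; otherwise prepending that branch to the longer half of $P$ from $u_i$ would yield a path strictly longer than $P$. Consequently, the longest path in $T$ starting at $u_i$ simply follows $P$ to its farther endpoint, giving $\tau_T(u_i)=\max(i+1,m-i)$. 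Letting $i$ range over $\{0,1,\ldots,m-1\}$, these detour orders constitute exactly the sequence $(a)_k,(a+1)_2,(a+2)_2,\ldots,(m)_2$ with $a=\lceil (m+1)/2\rceil$, and $k=1$ or $2$ according to whether $m$ is odd or even.

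Next, I would handle a vertex $v\in V(T)\setminus V(P)$. Since $T$ is a tree, there is a unique $v$--$u_i$ path $Q$ in $T$ internally disjoint from $P$, of some order $d+1\ge 2$. Concatenating $Q$ with the longer half of $P$ from $u_i$ gives a path in $T$ starting at $v$ of order $d+\max(i+1,m-i)\ge 1+a$, so $a+1\le\tau_T(v)\le m$. These off-detour vertices thus only contribute to the multiplicities $k_1,\ldots,k_l$ (never to the $a$-term), so $a$ occurs exactly $k$ times and each $k_j\ge 2$, matching the claimed form.

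For sufficiency, $K_1$ and $K_2$ realize $1$ and $2,2$. Given a sequence $(a)_k,(a+1)_{k_1},\ldots,(m)_{k_l}$ of the stated form with $m\ge 3$, I would construct a caterpillar $T$ by taking $P_m:\ v_1,v_2,\ldots,v_m$ as the spine (which already realizes $(a)_k,(a+1)_2,\ldots,(m)_2$) and, for each $j\in\{1,\ldots,l\}$, attaching $k_j-2$ additional leaves at an interior spine vertex $v_{s(j)}$ satisfying $\max(s(j),m-s(j)+1)=a+j-1$; such an $s(j)\in\{2,\ldots,m-1\}$ always exists since $a\le a+j-1\le m-1$. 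A short verification shows each leaf $w$ attached at $v_{s(j)}$ has $\tau_T(w)=\max(s(j),m-s(j)+1)+1=a+j$, and that every spine vertex or leaf retains its previous detour order, since any path in $T$ going from a vertex through some attached leaf is strictly dominated by a path staying on the spine.

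The main obstacle will be the branch-length bound in the necessity step: one must argue carefully that because $P$ is a longest path, every branch off $P$ at $u_i$ is constrained to have order at most $\min(i,m-1-i)+1$, which is what pins $\tau_T(u_i)$ down to $\max(i+1,m-i)$ exactly. Once this is established, the fullness of the sequence, the exact multiplicity of $a$, and the lower bound $k_j\ge 2$ all follow mechanically, and the caterpillar construction is routine.
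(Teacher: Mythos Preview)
Your proposal is correct and follows essentially the same approach as the paper's proof: both fix a detour $P$ in $T$, argue that $\tau_T(v)=\tau_P(v)$ for $v\in V(P)$ (the paper asserts this in one line, while you derive it via the branch-length bound), observe that off-detour vertices have detour order at least $a+1$ so they only inflate the multiplicities $k_1,\ldots,k_l$, and then realize any sequence of the stated form by a caterpillar with pendant leaves attached at appropriate interior spine vertices. Your write-up is more explicit in the computations, but the architecture is identical.
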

\begin{proof}
Let $T$ be a tree and suppose that
$\tau(T) = m$. If $m=1$ or $m =2$ we have detour
sequences $1$ and $2,2$ respectively. 
Suppose that $m \geq 3$. Let $L$ be a detour in $T$.
We can obtain $T$ by attaching appropiate trees to
the vertices of $L$, the attached trees being
pairwise disjoint.
%and having only a single vertex
%in common with $L$. 
Since $L$ is a detour, we can only attach trees to
the interior vertices of $L$. Also no vertex of 
the attached trees can have detour order greater
than $m$ or less than 
$\left\lceil \frac{m+1}{2} \right\rceil +1$, 
and if $v \in V(L)$ then 
$\tau_L(v) = \tau_T(v)$ since $L$ is
a detour in $T$. Therefore to get the detour
sequence of $T$, we can take the detour sequence
of $L$ and simply increase the length of some of the
repetitions of the detour sequence appropriately,
but leave the length of the repetition of the least
detour order in the detour sequence of $L$ unaltered.
This gives a detour sequence of the form stated
in the theorem.   

Now suppose we have a sequence of the form stated 
in the theorem. The trees $K_1$ and $K_2$
have detour sequence $1$ and $2,2$ respectively.
Suppose that $m \geq 3$. We construct a tree as follows:
Let $L:\ v_1,v_2,v_3,\ldots,v_{m}$ be a path of order
$m$. Attach $k_j-2$ disjoint paths of order $1$ to vertex
$v_{a+j-1}$ ($1 \leq j \leq l$) of $L$. The resulting graph 
is a tree with detour sequence given in the theorem. \hspace*{\fill}\qd 
\end{proof}

Next we investigate the detour sequences of simple,
connected graphs which possess exactly one cycle.
Such graphs are called 
\textit{unicyclic} graphs.\index{unicyclic graphs}
The structure of unicyclic graphs is described
in Lemma~\ref{unicyclic-Harary},
due to Anderson and Harary~\cite{harary}:
\begin{lemma}\label{unicyclic-Harary}
The following two statements are equivalent:
\begin{enumerate}
\item[(1)] $G$ is a unicyclic graph.
\item[(2)] $G$ consists of a cycle $C$ with disjoint trees
$T_i$ attached to some of the vertices $v_i \in V(C)$,
such that $|V(C) \cap V(T_i)| =1$.
\end{enumerate}
\end{lemma}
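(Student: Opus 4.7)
The proof splits naturally into the two implications.

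For (2) $\Rightarrow$ (1), the plan is to show that the graph described contains exactly one cycle, namely $C$. Any cycle $C'$ in $G$ either lies entirely in $C$, entirely in some $T_i$, or uses edges from both $C$ and one or more $T_i$. The second case is impossible since trees contain no cycles, and the first forces $C' = C$. For the mixed case, note that $V(C) \cap V(T_i) = \{v_i\}$, so any walk that leaves $C$ via edges of $T_i$ must re-enter $C$ through the same cut vertex $v_i$; but then $v_i$ is traversed twice by $C'$, contradicting $C'$ being a cycle. Hence $G$ has a unique cycle.

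For (1) $\Rightarrow$ (2), let $G$ be unicyclic and let $C$ be its unique cycle. I would consider the spanning subgraph $H = G - E(C)$ obtained by deleting the edges (but not the vertices) of $C$. The key observation is that $H$ is acyclic: any cycle in $H$ would be a cycle in $G$ distinct from $C$, contradicting uniqueness. Hence $H$ is a forest. Now let $T_1, T_2, \ldots, T_r$ be the connected components of $H$; each $T_i$ is a tree. The main verification is that each $T_i$ meets $V(C)$ in exactly one vertex. It meets $V(C)$ in at least one vertex because $G$ is connected and every vertex of $G$ lies in some $T_i$. If some $T_i$ contained two distinct vertices $u, v \in V(C)$, there would be a $u$--$v$ path $P$ in $T_i$; combining $P$ with either of the two internally disjoint arcs of $C$ between $u$ and $v$ would yield a cycle different from $C$, contradicting the uniqueness of $C$. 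Relabelling the unique vertex of $V(C) \cap V(T_i)$ as $v_i$ gives the required decomposition.

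The main obstacle is handling the possibility that some components of $H$ are trivial (single vertices of $V(C)$ with no pendant edges), and ensuring the notion of ``disjoint trees attached to some of the vertices of $C$'' in the statement is compatible with this — allowing $T_i$ to be just the one-vertex tree $\{v_i\}$ corresponds to $v_i$ having no attached subtree, which matches the phrase ``attached to some of the vertices'' in part (2). Once this is dealt with, both implications are straightforward and combine to yield the lemma.
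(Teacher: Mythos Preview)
Your proposal is correct and follows essentially the same approach as the paper: for $(1)\Rightarrow(2)$ both delete the edges of $C$, observe the resulting graph is a forest, and argue that each component meets $V(C)$ in exactly one vertex; for $(2)\Rightarrow(1)$ both simply note that the attached trees contribute no cycles. Your version is in fact more carefully argued than the paper's (which asserts $V(G_v)\cap V(C)=\{v\}$ without justification), and your discussion of trivial components is a reasonable point the paper leaves implicit.
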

\begin{proof}
\begin{namelist}{XXXXXXXX}
\item[$(1) \implies (2)$:] Let $C$ be the cycle of $G$. 
Delete the  edges of $C$ and for each $v \in V(C)$ 
let $G_v$ be the component in which $v$ lies. Since 
$G$ has only one cycle $C$, and $V(G_v) \cap V(C) = \{v\}$,
it follows that $G_v$ is acyclic. Since $G_v$ is
connected $G_v$ is a tree.
\item[$(2) \implies (1)$:] Each of the attached trees is acyclic
so clearly $G$ has only the one cycle $C$. \hspace*{\fill}\qd
\end{namelist}
\end{proof}
Other statements which are equivalent to $G$ being
unicyclic are given in Anderson and Harary~\cite{harary}.

First we confine our attention to graphs $G$ which are obtained
by attaching disjoint paths to some (or all) of 
the vertices of a cycle, where 
these paths have only an endvertex in common with the cycle. 
We require that at least one vertex of $C$ has a path
attached, in order to exclude the trivial case of cycles.
We will call such graphs \textit{path-unicyclic} graphs.
\index{path-unicyclic graphs}
Let $C$ be the cycle of a path-unicyclic graph $G$.
Suppose that $w \in V(C)$ and there is a path $P$ attached
to $w$ of order $p$. Then if $w$ has detour order
$p+1$ we say that $w$ is an \textit{extreme} vertex 
\index{extreme vertex in a path-unicyclic\\ graph} of $G$.
In other words, $w$ is an extreme vertex of $G$
if no path in $G$ with endvertex $w$
is longer than the path $w,P$.

We will prove a number of lemmas,
the ultimate goal of which is to show that
the detour sequences of
unicyclic graphs are full.
\begin{lemma}\label{extreme1}
Let $w$ be an extreme vertex of a path-unicyclic graph
$G$ with cycle $C$. Then if $v \in V(C) \setminus \{w\}$
the detour order of $v$ is greater than the detour order
of $w$.
\end{lemma}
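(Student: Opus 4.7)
The plan is to exhibit, for each $v \in V(C) \setminus \{w\}$, an explicit path in $G$ with endvertex $v$ whose order strictly exceeds $\tau(w) = p+1$. First I would unpack the hypothesis: since $w$ is extreme with attached path $P$ of order $p$, the longest path from $w$ is the concatenation $w,P$, which has order $p+1$. In particular, $w$ is adjacent to an endvertex of $P$, and because $P$ is an attached path of the path-unicyclic graph $G$, the vertex sets $V(P)$ and $V(C)$ are disjoint.

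Next, for a vertex $v\in V(C)\setminus\{w\}$, I would use the fact that $C$ supplies two arcs joining $v$ to $w$; pick either one and call the resulting $v$-$w$ path $Q$. Since $v\neq w$, $Q$ has at least one edge, so $|V(Q)|\ge 2$. I then form the concatenation $Q,P$ by appending $P$ at $w$, using the edge from $w$ to the endvertex of $P$. This concatenation is a genuine (simple) path because $V(Q)\subseteq V(C)$ and $V(P)$ are disjoint, so no vertex is repeated. The resulting path starts at $v$ and has order
\[
|V(Q)| + p \;\ge\; 2 + p \;>\; p + 1 \;=\; \tau(w),
\]
which gives $\tau(v) \ge p+2 > \tau(w)$, as required.

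The only point needing any care is verifying that $Q,P$ is indeed a simple path, and this is handled by the disjointness $V(P)\cap V(C)=\emptyset$ built into the definition of a path-unicyclic graph. There is no serious obstacle; the proof is essentially a direct construction combined with the extremality of $w$.
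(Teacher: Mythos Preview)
Your proof is correct and follows essentially the same approach as the paper: take an arc of $C$ from $v$ to $w$ and append the attached path $P$, obtaining a path from $v$ of order at least $p+2>p+1=\tau(w)$. The only addition is that you spell out the disjointness $V(P)\cap V(C)=\emptyset$ to justify that the concatenation is simple, which the paper leaves implicit.
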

\begin{proof}
Let $P$ be the path of order $p$ attached to $w$. 
Then the path $w,P$ is a longest path
from $w$ which contains no
vertices of the cycle $C$, apart from $w$. Therefore
if $v \in V(C) \setminus \{w\}$ we can follow the cycle from
$v$ to $w$ and then use $P$ to get a path with order 
greater than the detour order of $w$. \hspace*{\fill}\qd
\end{proof}

\begin{lemma}\label{extreme2}
A path-unicyclic graph $G$ has at most one extreme
vertex.
\end{lemma}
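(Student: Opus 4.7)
The plan is to argue by contradiction using Lemma~\ref{extreme1} in both directions. Suppose, for the sake of contradiction, that $G$ has two distinct extreme vertices $w_1, w_2 \in V(C)$, with attached paths $P_1, P_2$ of orders $p_1, p_2$ respectively, so that $\tau_G(w_1) = p_1+1$ and $\tau_G(w_2) = p_2+1$.

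Since $w_1$ is extreme and $w_2 \in V(C) \setminus \{w_1\}$, Lemma~\ref{extreme1} gives $\tau_G(w_2) > \tau_G(w_1)$. But $w_2$ is also extreme and $w_1 \in V(C) \setminus \{w_2\}$, so applying Lemma~\ref{extreme1} again yields $\tau_G(w_1) > \tau_G(w_2)$. These two strict inequalities are contradictory, so $G$ cannot have two extreme vertices.

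I do not expect any real obstacle here: the lemma is an immediate antisymmetry argument, and the work was already done in Lemma~\ref{extreme1}, which established that an extreme vertex is the strictly unique minimiser of $\tau_G$ among vertices of $C$. The only point that needs any care is confirming that Lemma~\ref{extreme1} really does give a \emph{strict} inequality (it does, since concatenating the path from $v$ along $C$ to $w$ with $P$ yields a path through $w$ that is properly longer than $w,P$). Given that, the proof is essentially a one-line application of the previous lemma twice.
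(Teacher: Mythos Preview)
Your proof is correct and is essentially identical to the paper's own argument: assume two extreme vertices and apply Lemma~\ref{extreme1} in both directions to obtain the contradictory pair of strict inequalities $\tau_G(w_2) > \tau_G(w_1)$ and $\tau_G(w_1) > \tau_G(w_2)$. The paper's proof is simply a terser version of what you wrote.
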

\begin{proof}
Suppose that $v$ and $w$ are extreme vertices of $G$.
By Lemma~\ref{extreme1} the detour 
order of $v$ is greater than the detour
order of $w$ and vice versa, which 
is a contradiction. \hspace*{\fill}\qd
\end{proof}

\begin{lemma}\label{extreme3}
Let $G$ be a path-unicyclic graph and $v \in V(G)$
be a vertex with the least detour order.
If $G$ has an extreme vertex $w$ with a path $P$
attached to $w$, then $v \in V(P) \cup \{w\}$.
\end{lemma}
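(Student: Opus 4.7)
The plan is to argue by contradiction. Suppose $v \notin V(P) \cup \{w\}$; I would then exhibit a path in $G$ starting at $v$ whose order strictly exceeds $\tau_G(w) = p+1$, yielding $\tau_G(v) > \tau_G(w)$ and contradicting the minimality of $\tau_G(v)$. Two cases arise, according to whether $v$ lies on the cycle $C$ or on one of the other attached paths.

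The case $v \in V(C) \setminus \{w\}$ is immediate from Lemma~\ref{extreme1}, which already gives $\tau_G(v) > \tau_G(w)$. The substantive case is when $v$ lies on some attached path $Q \ne P$, whose attachment vertex $u$ on $C$ satisfies $u \ne w$. (In a path-unicyclic graph each cycle vertex carries at most one attached path, since attaching two disjoint paths at the same vertex would produce a branching tree rather than a path; so $Q \ne P$ forces $u \ne w$.) Let $v'$ be the endvertex of $Q$ adjacent to $u$, and let $A$ be either of the two arcs of $C$ from $u$ to $w$. Concatenating the subpath of $Q$ from $v$ to $v'$, the edge $v'u$, the arc $A$, and the path $w,P$ produces a walk starting at $v$ and ending at the far end of $P$.

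The key step, and the main obstacle, is to verify that this concatenation is genuinely a path — that it repeats no vertex. This follows because the attached paths of $G$ are pairwise vertex-disjoint and each is internally disjoint from $V(C)$, so $Q$ meets $C$ only through the edge $v'u$, the arc $A$ lies entirely within $C$, and $P$ is disjoint from both $C$ and $Q$; the shared vertices $u$ and $w$ are therefore each visited exactly once. A direct count then shows the resulting path has at least $1 + 1 + 0 + 1 + p = p + 3$ vertices (at least one from the $v$-to-$v'$ portion of $Q$, then $u$, at least zero from the interior of $A$, then $w$, then the $p$ vertices of $P$). Hence $\tau_G(v) \ge p+3 > p+1 = \tau_G(w)$, contradicting the choice of $v$ and forcing $v \in V(P) \cup \{w\}$.
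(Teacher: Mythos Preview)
Your argument is correct and close in spirit to the paper's. Both handle the case $v \in V(C)\setminus\{w\}$ by invoking Lemma~\ref{extreme1}. For a vertex $v$ on an attached path $Q\ne P$ at $u\ne w$, the paper argues a touch more abstractly: since $u$ is not extreme (Lemma~\ref{extreme2}), a longest path from $u$ avoids $Q$ entirely, so every vertex of $Q$ has detour order strictly greater than $\tau(u)$, and then $\tau(u)>\tau(w)$ by Lemma~\ref{extreme1}. You instead build an explicit path from $v$ through $u$, around an arc of $C$ to $w$, and out along $P$, then count vertices directly. Your route is more constructive and sidesteps any appeal to the non-extremeness of $u$; the paper's is slightly more modular, reusing Lemma~\ref{extreme1} twice. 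Both reach the same inequality $\tau_G(v)>\tau_G(w)$ with no real gap.
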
 
\begin{proof}
First note that if $C$ is the cycle of $G$,
and if $u \in V(C) \setminus \{w\}$ with a path $Q$
attached to $u$, then every vertex of $Q$ has detour
order greater than the detour order of $u$. This is
because a longest path in $G$ 
from $u$ does not contain any
vertex of $Q$.  Hence the result follows from
Lemma~\ref{extreme1}. \hspace*{\fill}\qd
\end{proof} 

\begin{lemma}\label{not-extreme}
Let $G$ be a path-unicyclic graph with cycle $C$.
Suppose that $v$ is not an extreme vertex,
$v \in V(C)$, and $\tau_G(v) =d$.
If $v$ has an attached path $P$, with consecutive vertices 
$v_1, v_2, v_3, \ldots, v_p$, where $v_1$ is adjacent
to $v$, then the detour orders of $v_i$ are $d+i$,
$i=1,2,3, \ldots, p$.
\end{lemma}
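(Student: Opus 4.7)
The plan is to exploit the fact that $P$ is a dead-end branch of $G$: since $P$ meets the rest of $G$ only at $v$, every vertex $v_j$ of $P$ has all of its neighbours inside $V(P) \cup \{v\}$. First I would record that, because $v$ is not extreme, $d = \tau_G(v) \ge p+2$, and that any path in $G$ starting at $v$ whose first step is into $P$ (i.e.\ to $v_1$) is trapped inside $P$ and therefore contains at most $p+1$ vertices. Consequently, every path from $v$ of order $d$ must avoid $V(P)$ entirely.

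Next I would fix $i$ with $1 \le i \le p$ and classify the paths starting at $v_i$. A path whose first step is to $v_{i+1}$ is, by the same trapping argument, contained in $v_i, v_{i+1}, \ldots, v_p$ and therefore has at most $p - i + 1$ vertices. A path whose first step is towards $v$ must begin with the segment $v_i, v_{i-1}, \ldots, v_1, v$ (contributing $i+1$ vertices, with $v$ as its final vertex) and then continue as a path from $v$ inside the subgraph $G' = G - \{v_1, \ldots, v_i\}$. The continuation contributes at most $\tau_{G'}(v)$ vertices; crucially this equals $d$, because a longest $v$-path in $G$ avoids $V(P)$ and therefore survives undisturbed in $G'$.

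Piecing the two cases together gives $\tau_G(v_i) = i + d = d + i$, as the concatenation described in the second case is a genuine path, and it beats the first-case bound since $d \ge p+2$ forces $d + i \ge p + 3 > p - i + 1$. The only point needing a little care is the interaction between the two pieces of the concatenation — verifying that the $d$-vertex tail from $v$ does not clash with the already-used vertices $v_1, \ldots, v_i$ — but this is immediate from the dead-end observation that such a tail avoids all of $V(P)$. Iterating the statement for $i = 1, 2, \ldots, p$ then yields the claimed detour orders.
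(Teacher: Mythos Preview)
Your proof is correct and follows essentially the same approach as the paper's: both hinge on the observation that, since $v$ is not extreme, every longest path from $v$ avoids $P$, and hence every longest path from $v_i$ goes toward $v$ (not toward $v_p$), picking up exactly $i$ extra vertices. Your write-up is considerably more explicit than the paper's terse version---you spell out the inequality $d \ge p+2$, the two-case split on the first step from $v_i$, and the disjointness check---but the underlying argument is the same.
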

\begin{proof}
Since $v$ is not an extreme vertex, a longest
path from $v$
does not contain any vertex of $P$. It also follows
that a longest path from $v_i$ does not contain the vertices
$v_{i+1}, v_{i+2}, \ldots, v_p$ of $P$. Hence the detour
order of $v_{i+1}$ exceeds that of $v_i$ by one, and
the result follows. \hspace*{\fill} \qd
\end{proof} 

\begin{lemma}\label{extreme-subpath}
Let $w$ be an extreme vertex of a path-unicyclic
graph $G$, and $P$ be the path attached to $w$.
Let $Q$ be a detour in $G$. Then $P$ is a 
sub-path of $Q$ such that 
one of the endvertices of $P$
is also an endvertex of $Q$.
\end{lemma}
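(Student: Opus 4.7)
The plan is to argue by contradiction that $v_p$ must lie on $Q$; once this is established, the degree-$1$ property of $v_p$ and the pendant-path structure of $P$ finish the proof. So suppose $v_p \notin V(Q)$. Because $V(P)$ connects to the rest of $G$ only through the edge $v_1 w$, the intersection $V(Q) \cap V(P)$ must be a contiguous block $\{v_j, \ldots, v_k\}$ of the path $P$ (or empty), with $k \le p-1$.

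If $V(Q) \cap V(P) \neq \emptyset$ and $j = 1$, then $v_k$ has only one neighbour ($v_{k-1}$ or $w$) in $V(Q)$, hence is an endvertex of $Q$, and extending $Q$ by $v_{k+1}, \ldots, v_p$ yields a strictly longer path; if $j > 1$, then both $v_j$ and $v_k$ are forced to be endvertices, so $Q \subseteq V(P)$ with $|Q| \le p < p+1 \le \tau(G)$. Both sub-cases contradict the assumption that $Q$ is a detour.

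The delicate case is $V(Q) \cap V(P) = \emptyset$. Let $a, c$ be the two neighbours of $w$ on the cycle $C$, and let $T = G \setminus (V(P) \cup \{w\})$, which is a tree. Extremality of $w$ forces $\tau_T(a), \tau_T(c) \le p$. If $w \notin V(Q)$ then $Q \subseteq T$; using the classical tree fact that a longest path from any vertex ends at a diameter endpoint, combined with the tree triangle inequality, I obtain $\tau_T(a) \ge (|Q|+1)/2$. The path $v_p, v_{p-1}, \ldots, v_1, w$ followed by a longest path from $a$ in $T$ then has $p+1+\tau_T(a)$ vertices, exceeding $|Q|$ by at least $p+2-\tau_T(a) \ge 2$, contradicting $|Q| = \tau(G)$. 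If instead $w \in V(Q)$, write $Q = Q_1 + w + Q_2$ with $Q_1, Q_2 \subseteq T$ (one possibly empty). The alternative path $v_p, \ldots, v_1, w, Q_i$ has $p+1+|Q_i|$ vertices, which exceeds $|Q| = |Q_1|+1+|Q_2|$ whenever $|Q_{3-i}| < p$. Since $|Q_1|, |Q_2| \le p$ automatically, the only residual configuration is $|Q_1| = |Q_2| = p$ with both non-empty.

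The main obstacle is ruling out this residual configuration, i.e.\ the existence of disjoint longest paths of length $p$ from $a$ and from $c$ in $T$. I would parametrise any such pair by the cycle-minus-$w$ vertex $c_i$ (respectively $c_j$) at which each peels off into an attached pendant path. Equating each length to $p$ together with $\tau_T(a), \tau_T(c) \le p$ forces $i \ge m/2$ and $j \le m/2$, where $m = |C|-2$, while disjointness of the two paths forces $i < j$; these three inequalities are jointly unsatisfiable. This yields the final contradiction, so $v_p \in V(Q)$. Since $\deg v_p = 1$, $v_p$ is automatically an endvertex of $Q$; contiguity of $V(Q) \cap V(P)$ combined with the ``trapping'' argument of the second paragraph then forces $V(P) \subseteq V(Q)$, so $P$ appears as a sub-path of $Q$ with $v_p$ as an endvertex.
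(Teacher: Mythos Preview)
Your argument is correct, but it takes a substantially different route from the paper's. The paper begins with a structural observation you never make: in a path-unicyclic graph, every detour $Q$ has exactly one of two shapes --- either $V(Q)=V(T)\cup V(C)$ for a single attached path $T$ (traversing the whole cycle), or $Q$ runs from the tip of one attached path $L_1$, along an arc of $C$, to the tip of another attached path $L_2$. With this dichotomy in hand, the proof reduces to showing that in shape~(1) the single path $T$ must be $P$, and in shape~(2) one of $L_1,L_2$ must be $P$; both follow from short length comparisons using only the inequality $|V(P)|+1\ge |V(C)|$ that extremality gives.

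Your proof instead assumes $v_p\notin V(Q)$ and works entirely inside the tree $T=G\setminus(V(P)\cup\{w\})$, using the lower bound $\tau_T(a)\ge(|Q|+1)/2$ for the $w\notin V(Q)$ case and the peel-off parametrisation $i,j$ for the residual $|Q_1|=|Q_2|=p$ case. The key inequality $i\ge m/2$ comes out correctly (the path $c_m,\ldots,c_i$ followed by the pendant portion of $Q_1$ gives $\tau_T(c)\ge m+p-2i$, and $\tau_T(c)\le p$ forces $2i\ge m$), and together with $j\le m/2$ and $i<j$ this is indeed inconsistent. So the residual case is genuinely excluded.

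What each approach buys: the paper's shape classification does the heavy lifting up front and leaves only two short inequalities to check, so the proof is about half the length of yours. Your approach avoids ever classifying the global shape of $Q$ and works by local surgery, which is more elementary in spirit but forces you through the somewhat delicate $|Q_1|=|Q_2|=p$ endgame. If you had observed at the outset that a detour in such a graph must start and end at tips of attached paths (or at the far end of the cycle from a single attached path), several of your sub-cases would have collapsed immediately.
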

\begin{proof}
Let $C$ be the cycle 
in $G$. There are only two possible forms for
$Q$:
\begin{enumerate}
\item $V(Q) = V(T) \cup V(C)$, where $T$ is a path
attached to a vertex $u \in V(C)$.
\item $V(Q) = V(L_1) \cup \{v_1,v_2,v_3,\dots,v_l\} \cup V(L_2)$,
where $L_1$ and $L_2$ are paths attached to vertices $x$
and $y$, respectively, of $C$ 
and $v_1, v_2, v_3, \ldots, v_l$ are
consecutive vertices of $C$ lying between $x$ and $y$.
\end{enumerate}
Suppose that $Q$ has the form (1) and $T \neq P$.
Since $w$ is an extreme vertex $|V(P)| + 1 \geq |V(C)|$.
Hence 
$|V(T)| + |V(P)| +2 > |V(T)| + |V(P)| +1 \geq |V(T)| + |V(C)|$.
Thus we get a longer path than $Q$ by starting with
$T$, following the cycle to $w \in V(C)$, and then using
the path $P$.  Hence $T = P$ and $P$ is a sub-path of $Q$. 

Now suppose that $Q$ has the form (2) and that neither
one of the paths $L_1$, $L_2$ are the path $P$. 
Suppose that
$w \in V(Q)$ and $w = v_i$, $1 \leq i \leq l$.
Since $w$ is an extreme vertex we have
\begin{align}
|V(P)| \geq l-i + 1 +|V(L_2)|. \tag{1} 
\end{align}
Since
$|V(P)| > |V(L_2)| > 0$ and $|V(P)| \geq |V(C)| > l-i +1$
equality cannot hold in (1) and
$|V(P)| > l-i + 1 +|V(L_2)|$.
Hence
\[
|V(Q)| = |V(L_1)| + (|V(L_2)| +1+(l-i)) +i < |V(L_1)|+i+1+|V(P)|.
\]
Therefore the path $L_1, x,v_1,v_2,\ldots,v_i,P$ is 
longer than $Q$, contradicting the fact that
$Q$ is a detour in $G$.
If $w \notin V(Q)$ then the path
$L_1,x,v_1,v_2,\ldots,v_l,\ldots,w,P$ is longer than
$Q$, since $|V(P)| > |V(L_2)| +l$, 
giving again a contradiction.
Hence one of the paths $L_1$ or $L_2$ is $P$, so $P$
is a sub-path of $Q$. \hspace*{\fill}\qd
\end{proof}

The next lemma enables us to determine the least
and greatest terms in the detour sequence of a
path-unicyclic graph.
\begin{lemma}\label{max-min}
Suppose that $G$ is a path-unicyclic graph with
cycle $C$. Let $V(C) = \{v_1,v_2,\dots,v_n\}$
and $p_i$ be the order of a path attached to $v_i$
(if no such path exists then we put $p_i = 0$).
Let $m$ and $M$ be the least and greatest
terms in the detour sequence of $G$. 
Then the following hold:
\begin{enumerate}
\item  $M = \max\{\tau(v_i) + p_i \}$,
where the maximum is taken  over all vertices $v_i \in V(C)$
which are not extreme vertices.

\item If $G$ has an extreme vertex then  
$m = \left\lceil \frac{M +1}{2} \right\rceil$.
If $G$ does not have an extreme vertex then
$m = \min \{\tau(v_i) \}$, where the minimum is taken over
all vertices $v_i \in V(C)$.

\item Suppose that  $w$ is an extreme vertex of $G$ with 
attachment path $P$ and that $m \le t \le M$.
Then there exists $v \in V(w,P)$ such that
$\tau_G(v) =t$.
 
\end{enumerate}
\end{lemma}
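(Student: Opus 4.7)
The plan is to analyse each of the three assertions by inspecting the possible endvertices of a detour and by computing the detour orders of the vertices on the attached paths, using Lemmas~\ref{extreme1}--\ref{extreme-subpath} as building blocks.

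For part (1), the lower bound $M\ge\tau(v_i)+p_i$ for each non-extreme $v_i\in V(C)$ is an immediate consequence of Lemma~\ref{not-extreme}: the far end of the attached path at $v_i$ has detour order exactly $\tau(v_i)+p_i$. For the reverse inequality I take any detour $Q$ and an endvertex $y$ of $Q$. If $y\in V(C)$ has no attached path, then $y$ is non-extreme and $M=\tau(y)+0$. If $y$ is the far end of an attached path at a non-extreme $v_j$, Lemma~\ref{not-extreme} gives $M=\tau(v_j)+p_j$. The only other possibility is that $y$ lies on the attached path of the (by Lemma~\ref{extreme2}, unique) extreme vertex $w$; Lemma~\ref{extreme-subpath} then forces $y$ to be the far end of that path, and the other endvertex $y'$ of $Q$ must lie outside that extreme attached path and so fall into one of the first two cases.

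For part (2), if $G$ has no extreme vertex then Lemma~\ref{not-extreme} gives $\tau_G(u_i)=\tau_G(v_j)+i>\tau_G(v_j)$ on every attached path, so the minimum is realised on $V(C)$. Suppose instead $w$ is extreme, with attached path $P$ of order $p$ and vertices $u_1,\ldots,u_p$, $u_1$ adjacent to $w$. Lemma~\ref{distinct}(1) gives $m\ge\lceil(M+1)/2\rceil$, and Lemma~\ref{extreme3} confines any minimiser to $\{w\}\cup V(P)$. Writing $\sigma$ for the order of a longest path in $G$ that starts at $w$ and avoids $V(P)$, each path from $u_i$ runs either outward to $u_p$ or inward through $u_1,w$ and then in $G-V(P)$, giving
\[
\tau_G(u_i)=\max\{p-i+1,\ i+\sigma\}\quad\text{and}\quad \tau_G(w)=p+1.
\]
By Lemma~\ref{extreme-subpath} any detour traverses $P$ to its far end, hence $M=\tau_G(u_p)=p+\sigma$. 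The two arms of this V-shape meet near $i^\ast=(2p+1-M)/2$; a case analysis according to whether $i^\ast\in\{1,\ldots,p\}$ shows that the minimum over $\{w\}\cup V(P)$ equals $\lceil(M+1)/2\rceil$, with $w$ itself realising the minimum in the boundary case $\sigma=p+1$.

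For part (3), the same formula shows that on the ascending arm $i\ge\lceil i^\ast\rceil$, $\tau_G(u_i)=i+\sigma$ increases by exactly $1$ at every step from $\lceil(M+1)/2\rceil=m$ up to $p+\sigma=M$; in the boundary case the value $m=p+1$ appears at $w$. Thus every integer $t$ with $m\le t\le M$ is realised by a vertex of $V(w,P)$. The one delicate step in the whole proof is this V-shape bookkeeping --- verifying that the integer minimiser of $\max\{p-i+1,\ i+\sigma\}$ lies in $\{1,\ldots,p\}$ in the generic case, and otherwise handing the minimum over to $w$; everything else is routine matching of detour endvertices against Lemmas~\ref{extreme1}--\ref{extreme-subpath}.
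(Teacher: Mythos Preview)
Your proof is correct. Part~(1) follows essentially the same line as the paper: locate an endvertex of a detour and match it against the cases via Lemmas~\ref{extreme2}, \ref{not-extreme}, \ref{extreme-subpath}. For Parts~(2) and~(3), however, you take a genuinely different route. You compute the detour orders on the extreme path explicitly --- introducing the auxiliary quantity $\sigma$ (the order of a longest path from $w$ avoiding $P$), deriving $\tau_G(u_i)=\max\{p-i+1,\ i+\sigma\}$ and $M=p+\sigma$, and then minimising this V-shape directly, with a short case split on whether the real minimiser $i^\ast=(p+1-\sigma)/2$ lands in $\{1,\dots,p\}$. The paper instead avoids all of this arithmetic by a single observation: for every $t\in\{w\}\cup V(P)$ one has $\tau_G(t)=\tau_Q(t)$, where $Q$ is a fixed detour (a path of order $M$). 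Since $w$ is extreme, the ``midpoint'' of $Q$ lies in $\{w\}\cup V(P)$, and so the minimum of $\tau_G$ over this set equals the minimum of $\tau_Q$ over all of $Q$, which is the known value $\lceil(M+1)/2\rceil$ for a path; Part~(3) then drops out because the detour orders along the longer half of a path take every value from the minimum up to $M$. Your approach buys explicitness --- the reader sees exactly where each detour order sits on $P$ --- at the cost of the boundary bookkeeping you flag; the paper's approach buys brevity by piggybacking on the already-known detour sequence of a path, but the key identity $\tau_G=\tau_Q$ on $\{w\}\cup V(P)$ needs to be noticed. One small point to tighten in your Part~(1): you should note briefly why a detour endvertex cannot be an \emph{interior} vertex of a non-extreme attached path, nor a cycle vertex with an attached path --- both follow in one line from Lemma~\ref{not-extreme} (respectively Lemma~\ref{extreme1}) since such a vertex would have strictly smaller detour order than the far end of its attached path (respectively than its cycle neighbours).
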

\begin{proof}
\begin{enumerate}
\item Let $v$ be an endvertex of a detour $Q$ in $G$.
Since by Lemma~\ref{extreme2}
there is at most one extreme vertex in $G$,
and clearly both endvertices of $Q$ cannot lie in a
single attached path of $C$, we may assume that $v$ is not an
extreme vertex and does not lie in the attached path of an extreme
vertex. %%% Changed next 
%%%%%%%%%%%%%%%%%%%%%%%%%%%%%%%%%%%%%%%%%%%%%%%
Then either $v$ lies in 
a path attached to a non-extreme vertex,
or $v = v_i \in V(C)$. % and $p_i =0$. 
In both cases  $\tau(v) \geq M$, 
otherwise, by Lemma~\ref{not-extreme},
we get a path longer than $Q$. %%%%%%%%%%%
%%%%%%%%%%%%%%%%%%%%%%%%%%%%%%%%%%%%%%%%%%%%%%%%%%%%%%%%%%%%%%%%%%
But, by definition of
$M$ and Lemma~\ref{not-extreme}, we have $\tau(v) \leq M$. 
Hence $\tau(v) = M$.
\item If $G$ does not have an extreme vertex, then, by
Lemma~\ref{not-extreme}, a vertex with 
minimum detour order lies on $C$.
Hence $m = \min \{\tau(v_i) \ |\  v_i \in V(C) \}$.
If $G$ has an extreme vertex $w$, then by Lemma~\ref{extreme-subpath}
a detour $Q$ has the path $P$ which is attached
to $w$ as a sub-path, and $P$ and
$Q$ share an endvertex. 
Let $u \in V(G)$ be such that $\tau_G(u) = m$.
By Lemma~\ref{extreme3}
$u \in V(P) \cup \{w \}$, so $u \in V(Q)$.
For all vertices $t \in V(P) \cup \{w\}$ we get
$\tau_Q(t) = \tau_G(t)$.
Hence $\tau_G(u) = \tau_Q(u)$ and
$\tau_Q(u) \le \tau_Q(r)$ for all $r \in V(P) \cup \{w\}$.
Since $w$ is an extreme vertex the path
$w,P$ is a longest path from $w$ in both $Q$ and $G$.
Therefore $\tau_Q(u)$ is the least detour order
in the detour sequence of path $Q$, and it
follows that
\begin{align}
m=\tau_G(u) =\tau_Q(u) = \left\lceil \frac{\tau(Q) +1}{2}\right\rceil
= \left\lceil\frac{M+1}{2}\right\rceil. \notag
\end{align}

\item Let $Q$ be a detour in $G$. It is shown above that
if $u \in V(G)$ with $\tau_G(u) =m$
then $u \in V(w,P)$. By Lemma~\ref{extreme-subpath},
if $v \neq w$ is an endvertex of $w, P$
then $\tau_G(v) = M$. Since  
$w, P$ is a longest path from $w$ in  $Q$
the result follows. \hspace*{\fill} \qd

\begin{comment}
Let $u \in V(G)$ have minimum detour
order $m$. By Lemma~\ref{extreme3}
$u \in V(P) \cup \{w \}$, so $u \in V(Q)$ and
therefore  $\tau_Q(u) = m$, and $\tau_Q(u) \le \tau_Q(q)$
for all $q \in V(Q)$. 
Therefore $m = \left\lceil \frac{M+1}{2} \right\rceil$. \hspace*{\fill}\qd
\end{comment}
\end{enumerate}
\end{proof}
\begin{comment}
\begin{lemma}
Let $G$ be a path-unicyclic graph, and $v$, $w$ be consecutive
vertices on the cycle $C$ of $G$ such that neither $v$ nor
$w$ has a path attached to it. 
Then $|\tau(v) - \tau(w)| \leq 1$.
\end{lemma}
\begin{proof}
If there exists
a longest path from $v$ with endvertex
$w$ then $\tau(w) = \tau(v)$.
For all the other cases $\tau(v) - \tau(w) = \pm 1$,
because $v$ and $w$ are adjacent and $v$ and $w$ do not
have an attachment path. \hspace*{\fill}\qd
\end{proof}
\end{comment}

We will also need the following lemma:

\begin{lemma}\label{full-gap}
Let $C$ be the cycle in a path-unicyclic graph $G$.
Let $v$ and $w$ be adjacent vertices on $C$ such that
$\tau(v) > \tau(w)+1$. Then,
for each integer $t$ such that $\tau(w) \leq t \leq \tau (v)$,
there is a vertex $u \in V(G)$ with $\tau (u) = t$.
\end{lemma}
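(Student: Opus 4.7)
The plan is to split on whether or not $G$ has an extreme vertex. If it does, then part~(3) of Lemma~\ref{max-min} already delivers the conclusion: it produces, for every integer $t$ with $m \le t \le M$, a vertex of detour order $t$, and since $m \le \tau(w) < \tau(v) \le M$, every value in $[\tau(w),\tau(v)]$ is realised.

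The substantive case is when $G$ has no extreme vertex. I would label the cycle $c_0 = v$, $c_1 = w$, $c_2,\ldots,c_{n-1}$, write $p_i$ for the order of the path attached to $c_i$ (taking $p_i=0$ if none), and let $P_v$ be a longest path from $v$. The key step is to pin down the exact form of $P_v$. Since $v$ is not extreme, $P_v$ cannot consist of $v$ together with $v$'s attached path, so $P_v$ must begin along the cycle. It cannot begin with the edge $vw$, for otherwise $P_v - v$ would be a path from $w$ of order $\tau(v) - 1$, contradicting the hypothesis $\tau(w) < \tau(v) - 1$. So $P_v$ starts $v, c_{n-1}$. Similarly $w \in V(P_v)$, or else prepending $w$ would give a path from $w$ of order $\tau(v) + 1$, again contradicting the hypothesis. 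Combining these observations with the only admissible form $P_v = v, c_{n-1}, c_{n-2},\ldots,c_j, L_j$ in a path-unicyclic graph (where $L_j$ is the attached path at $c_j$) forces $j = 1$, and hence $\tau(v) = n + p_w$.

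The remainder is a short arithmetic comparison. The path $Q = w, c_2, c_3,\ldots,c_{n-1}, v, L_v$ from $w$ is valid and yields $\tau(w) \ge n + p_v \ge n$; together with $\tau(v) = n + p_w$ and $\tau(w) < \tau(v)$ this also forces $p_w \ge 1$. Lemma~\ref{not-extreme} applied at $w$ then tells us that the $p_w$ vertices on $w$'s attached path have detour orders $\tau(w)+1, \tau(w)+2, \ldots, \tau(w)+p_w$. Since $\tau(w) + p_w \ge n + p_w = \tau(v)$, the integers $\tau(w), \tau(w)+1, \ldots, \tau(v)$ are all realised on $\{w\} \cup V(L_w)$, which is the desired conclusion. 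I expect the structural step that pins down $P_v$ to be the main obstacle; once that is in place, the argument collapses to the comparison of the two natural lower bounds $\tau(v) = n + p_w$ and $\tau(w) \ge n + p_v$.
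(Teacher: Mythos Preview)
Your proof is correct and follows essentially the same route as the paper. Both arguments pin down the longest path from $v$ as going all the way around the cycle to $w$ and then into $w$'s attached path (yielding $\tau(v)=|V(C)|+p_w$), then invoke Lemma~\ref{not-extreme} at $w$ together with $\tau(w)\ge |V(C)|$ to cover the interval $[\tau(w),\tau(v)]$; the only cosmetic difference is that you split at the outset on whether $G$ has an extreme vertex, whereas the paper first performs the structural analysis and only then branches on whether $w$ itself is extreme.
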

\begin{proof}
Let $L$ be a longest path from $v$. Then $w \in V(L)$,
otherwise the path $w,L$ would have order %%%Changed here 
%%%%%%%%%%%%%%%%%%%%%%%%%%%%%%%%%%%%%%%%%%%%%%%%%%%%%%%%%%%%%%%%%%%%%%%%%
$\tau(v) +1$, contradicting our assumption
that $\tau(w) < \tau(v)$.
Also, since $\tau (w) < \tau (v) -1$, the vertex $w$ cannot be the 
first vertex  on $L$ after $v$. Hence $V(C) \subset V(L)$,
and $w$ is not an endvertex of $L$, otherwise $\tau (w) = \tau (v)$.
Therefore there is a path attached to $w$. If $w$ is an extreme
vertex the result follows by Lemma~\ref{max-min}, part (3).
%since 
%and Lemma~\ref{extreme3}. 
%%%%%%%%%%%
%% Explain more use of above lemmas
%%%%%%%%
%%%%%%%%%%%%%%%%%
Suppose that $w$
is not an extreme vertex and  let   
$P_w$ be the path attached to $w$.
Then $\tau (v) = |V(P_w)| + |V(C)|$.   
By Lemma~\ref{not-extreme} the vertices on $P_w$ have detour orders
$\tau (w) +k$, for $k=1,2,3,\ldots,p$ where 
$p = |V(P_w)|$. We have $\tau (w) <  \tau (v)$.
Thus to ensure that detour orders lying between 
$\tau (w)$ and $\tau (v)$ occur on $P_w$
we must find an integer $t$ with $1 \leq t \leq |V(P_w)|$ such
that $\tau (w) +t = |V(P_w)| + |V(C)|$. Hence
$t = |V(P_w)| + |V(C)| - \tau (w)$. Since
$\tau (w) \geq  |V(C)|$ and
$\tau (v) - \tau (w) > 1$
it follows that $1 < t \leq |V(P_w)|$ as required. \hspace*{\fill}\qd
\end{proof}

Now we can show that path-unicyclic
graphs have full detour sequences:
\begin{theorem}\label{path-uni}
The detour sequence of a path-unicyclic graph $G$ is full.
\end{theorem}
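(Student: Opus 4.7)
The plan is to split into two cases according to whether $G$ has an extreme vertex.

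If $G$ has an extreme vertex $w$ with attached path $P$, the result is essentially immediate: Lemma~\ref{max-min}(3) already asserts that for every integer $t$ with $m \le t \le M$ there exists a vertex $v \in V(w,P)$ with $\tau_G(v) = t$, which is exactly the fullness condition.

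The substantive case is when $G$ has no extreme vertex. By Lemma~\ref{max-min} we then have $m = \min\{\tau_G(v_i) : v_i \in V(C)\}$ and $M = \max\{\tau_G(v_i) + p_i : v_i \in V(C)\}$. Set $M_C = \max_i \tau_G(v_i)$. I would first show that every integer in $[m, M_C]$ appears in the detour sequence. To do this, traverse $C$ and consider each pair of consecutive cycle vertices $v$, $w$: if $|\tau_G(v) - \tau_G(w)| \le 1$ the cycle values themselves keep the block contiguous, and otherwise Lemma~\ref{full-gap} directly supplies vertices realizing every integer strictly between $\tau_G(w)$ and $\tau_G(v)$. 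Stitching these overlapping intervals together around $C$ yields the whole block $[m, M_C]$.

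Next, pick $v^* \in V(C)$ achieving $M = \tau_G(v^*) + p^*$, with attached path $P^*$. Because $v^*$ is not extreme, Lemma~\ref{not-extreme} gives that the successive vertices of $P^*$ have detour orders $\tau_G(v^*)+1, \tau_G(v^*)+2, \ldots, \tau_G(v^*)+p^* = M$, so the block $[\tau_G(v^*)+1,\, M]$ is also realized. Since $v^* \in V(C)$ we have $\tau_G(v^*) \le M_C$, so $\tau_G(v^*)+1 \le M_C + 1$ and the two blocks $[m, M_C]$ and $[\tau_G(v^*)+1, M]$ meet or overlap; their union is the full interval $[m,M]$, as required.

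The main obstacle will be the bookkeeping in the traversal step: organizing the contributions of Lemma~\ref{full-gap} around $C$ into a single contiguous block. This is not a deep argument — adjacent pairs of small detour-order gap maintain contiguity trivially, while Lemma~\ref{full-gap} explicitly fills the wider jumps — but some care is needed to state cleanly that, starting from the cycle vertex of minimum detour order and moving around $C$, the set of realized detour orders grows by at most one integer gap at each step and hence exhausts $[m, M_C]$.
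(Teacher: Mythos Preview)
Your proposal is correct and follows essentially the same route as the paper: split on the presence of an extreme vertex, invoke Lemma~\ref{max-min}(3) in the extreme case, and in the non-extreme case traverse $C$ starting from a cycle vertex of minimum detour order, using Lemma~\ref{full-gap} to fill any jump of size $\ge 2$ between consecutive cycle vertices, then finish by applying Lemma~\ref{not-extreme} to the attached path at a vertex realising $M$. The paper formalises the traversal as an induction on the index $i$ showing that $[m,\tau(v_i)]$ is realised for every $i$, which is exactly the ``bookkeeping'' you flag; your targeting of $[m,M_C]$ rather than $[m,\tau(v_l)]$ is a cosmetic difference, since $\tau(v^\ast)\le M_C$ ensures the two blocks still meet.
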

\begin{proof}
Suppose that $G$ contains an extreme vertex
$w$ with an attachment path $P$. Then it follows
from  part (3) of Lemma~\ref{max-min} that 
$V(P) \cup \{w\}$ contains vertices having all detour
orders between the least and greatest detour order
of $G$.

Now suppose that $G$ does not  contain
an extreme vertex. Let $m$ be the smallest number in the detour
sequence of $G$. By Lemma~\ref{not-extreme}
there exists $v \in V(C)$ with $\tau (v) =m$.
Give the cycle $C$ an orientation and 
let $v_1, v_2, v_3,\ldots,v_n$ (where $n = |V(C)|$) 
be consecutive vertices
on $C$, with $\tau (v_1) = m$.
We will show by induction that, for every 
$i=1,2,3,\ldots,n$, and for every
integer $t$ such that $m \leq t \leq \tau(v_i)$,
there exists $w \in V(G)$ with $\tau(w) = t$.
This is obviously true for $i=1$. Suppose that
it is true for $i =r-1$, for some $r$ with $2 \leq r \leq n$.
If $\tau (v_r) \leq \tau (v_{r-1}) + 1$, then
it follows from the induction hypotheses that the result
holds for $i =r$. If $\tau (v_r) \geq \tau (v_{r-1}) + 2$,
then by Lemma~\ref{full-gap} the result holds for $i = r$.
Hence, by induction, the result holds for every
$i=1,2,3,\ldots,n$.  By Lemma~\ref{max-min} 
it follows that, for some vertex 
$v_l \in V(C)$ with attachment path $P_l$, we
have $\tau (G) = \tau (v_l) + |V(P_l)|$.
We have shown above that every integer 
between $m$ and $\tau (v_l)$ is in
the detour sequence of $G$, and, by Lemma~\ref{not-extreme},
every integer between $\tau (v_l)$ and $\tau (G)$
is also in the detour sequence of $G$.
Hence the detour sequence of $G$ is full. \hspace*{\fill} \qd
\end{proof}
%% Perhaps have an earlier lemma: If $L$
%%% is a detour in a tree then then the detour sequence 
%%%% of the tree is obtained by repeating some
%%% of the detour orders of the path $L$. 
%%% Another Lemma If $L$ is a detour in a tree
%%% then least and greatest detour orders 
%%% occurs on $L$
%%% Or lemmas about how a tree is constructed
%%% from a longest path in the tree
%%% Or a lemma about the longest path
%%% from a vertex of a tree

We can  extend Theorem~\ref{path-uni}
to unicyclic graphs as follows:
\index{detour sequence of a unicyclic graph}
\begin{theorem}\label{unicyclic}
The detour sequence of a unicyclic graph $G$ is full.
\end{theorem}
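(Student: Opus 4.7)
The plan is to reduce to Theorem~\ref{path-uni}. By Lemma~\ref{unicyclic-Harary}, write $G = C \cup T_1 \cup \cdots \cup T_k$, where $C$ is the cycle of $G$ and each $T_i$ is a tree attached to $C$ at the single vertex $v_i \in V(C)$. For each $i$, choose a longest path $P_i$ in $T_i$ with $v_i$ as an endvertex, and let $G'$ denote the path-unicyclic graph obtained from $G$ by deleting, for every $i$, all vertices in $V(T_i) \setminus V(P_i)$.

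First I would show that $\tau_G(v) = \tau_{G'}(v)$ for every $v \in V(G')$, and in particular that $\tau(G) = \tau(G')$. The key observation is that any path in $G$ starting at $v$ enters each attached tree $T_j$ (if at all) through $v_j$ and lies within a single branch of $T_j$ at $v_j$; such a branch excursion can be replaced by an initial segment of $v_j, P_j$ without shortening the path, since $P_j$ is a longest path in $T_j$ with endvertex $v_j$. Consequently, Theorem~\ref{path-uni} applied to $G'$ gives that every integer $t$ with $m' \le t \le \tau(G')$ is realised as $\tau_G(w)$ for some $w \in V(G') \subseteq V(G)$, where $m' := \min\{\tau_{G'}(v) : v \in V(G')\}$.

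The remaining step is to verify that each vertex $u \in V(G) \setminus V(G')$ contributes no detour order outside the already-full interval $[m', \tau(G')]$. Fix such a $u$, say $u \in V(T_i) \setminus V(P_i)$. Every path from $u$ must first traverse the unique $u$-to-$v_i$ path in $T_i$; letting $G^{*}$ denote the subgraph of $G$ obtained by removing the interior of this path, one has $\tau_G(u) = d_{T_i}(u, v_i) + \tau_{G^{*}}(v_i)$. A short case analysis, splitting on whether the branch of $T_i$ at $v_i$ containing $u$ coincides with the branch containing $P_i$, then shows $\tau_G(u) \in [m', \tau(G')]$, so $\tau_G(u)$ is a detour order already realised in $G'$.

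The main obstacle is the sub-case in which $u$ and $P_i$ lie in the same branch of $T_i$ at $v_i$: here the removal of the interior of the $u$-to-$v_i$ path detaches $P_i$ from $v_i$ in $G^{*}$, so the direct substitution argument used elsewhere fails. The rescue is that the maximality of $P_i$ forces $d_{T_i}(u, v_i) \le |V(P_i)| - 1$, and combining this with $\tau_{G^{*}}(v_i) \ge |V(C)|$ (using the rest of the cycle and the other trees) and the lower bound on $m'$ provided by Lemma~\ref{max-min} still delivers $\tau_G(u) \ge m'$, completing the proof.
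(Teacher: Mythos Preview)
There is a genuine gap. Your formula $\tau_G(u)=d_{T_i}(u,v_i)+\tau_{G^{*}}(v_i)$ presumes that a longest path from $u$ proceeds all the way to $v_i$, but this need not happen: such a path may head away from $v_i$, or follow only part of the $u$--$v_i$ segment before branching into another arm of $T_i$. Concretely, take $G$ to be $C_3$ with a tree $T_1$ attached at $v_1$ consisting of the path $v_1,a_1,\dots,a_{100}$ together with a side-path $a_1,u,b_1,\dots,b_{98}$; choosing $P_1=a_1,\dots,a_{100}$, the longest path from $u$ is $u,a_1,a_2,\dots,a_{100}$ of order $101$, while your formula yields $d_{T_1}(u,v_1)+\tau_{G^{*}}(v_1)=2+3=5$. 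Since here $m'=\lceil(\tau(G')+1)/2\rceil=52$, the rescue bound you propose cannot reach $m'$ either. The same example also breaks your first step for vertices on $P_i$: one gets $\tau_G(a_{50})=149$ (via $a_{50},\dots,a_1,u,b_1,\dots,b_{98}$) but $\tau_{G'}(a_{50})=53$, because your rerouting argument (``enter $T_j$ through $v_j$ and replace the excursion by an initial segment of $v_j,P_j$'') does not apply to a path that is already inside $T_i$ and branches at an interior vertex of $P_i$ rather than at $v_i$.

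The paper's treatment of the tree vertices is both simpler and avoids any such formula: it records that for adjacent vertices $u_1,u_2\in T_i$ the edge $u_1u_2$ is the \emph{unique} $u_1$--$u_2$ path in $G$ (since $T_i$ is a tree meeting the rest of $G$ only at $v_i$), and hence $|\tau_G(u_1)-\tau_G(u_2)|\le 1$. This single Lipschitz-type estimate guarantees that walking through $T_i$ the $G$-detour orders change by at most one per step, so no tree vertex can create a gap in the detour sequence; it replaces your entire case analysis on the location of $u$ relative to $P_i$.
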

\begin{proof}
By Lemma~\ref{unicyclic-Harary} the graph $G$ consists 
of a cycle $C$, with disjoint trees
$T_i$ attached to some of the vertices $v_i \in V(C)$,
such that $|V(C) \cap V(T_i)| =1$. The theorem is obviously
true if $G= C$, so we may assume that $G$ contains at least
one tree attached to $C$. Let $Q_i$ be a detour in $T_i$
with endvertex $v_i$. Let $G^{\prime}$ be the path-unicyclic
graph with cycle $C$ and paths $Q_i$ attached to $v_i \in V(C)$.
If $w \in V(C)$ then 
$\tau_G (w) = \tau_{G^\prime}(w)$,
since the detour orders of vertices on $C$ depends only
on $|V(C)|$ and the orders of  longest paths with an
endvertex $v_i$ in the attached trees $T_i$.
Also, if $u \in Q_i$, then
$\tau_G(u) = \tau_{G^{\prime}}(u)$,
and if $u_1 \in T_i$ and $u_2 \in T_i$ are adjacent
then $|\tau_G(u_1) - \tau_G(u_2)| \leq 1$ since
$u_1,u_2$ is the only path in $G$ which joins
$u_1$ and $u_2$.
Since the detour sequence of $G^{\prime}$ is full
it follows that the detour sequence of $G$ is full. \hspace*{\fill} \qd
\end{proof}

The detour sequence of any path-unicyclic graph is generated by the
following algorithm:\index{detour sequence algorithm}\\
Let $v_1,v_2,\ldots,v_n$ be consecutive
vertices on the cycle. Let $k_i$ denote boxes
(whose contents will be positive integers)
attached to $v_i$. We also denote the contents
of box $k_i$ by $k_i$, and the phrase
``replace $k_i$ with $x$"
means ``replace the contents of box
$k_i$ with $x$".

\begin{enumerate}
\item Initially $k_i = |V(C)| = n$ for $i=1,2,\ldots,n$.
\item Attach a path $P_i$ of order  $p_i$ 
to a vertex $v_i$ of the cycle. 
\item If $p_i \geq k_i$, replace $k_i$ with $p_i +1$.
\item If $i >1$ and  $k_{i-j} < |V(C)| + p_i - j+1$, 
$j=1,2, \dots, i-1$,
replace $k_{i-j}$ with $|V(C)| + p_i - j +1$.
\item If $i < n$ and $k_{i+j} < |V(C)| + p_i - j+1$, 
$j=1,2, \dots, n-i$,
replace $k_{i+j}$ with $|V(C)| + p_i - j +1$.
\item Iterate steps 2 to 5 for each
attached path.
\end{enumerate}

Then $k_i$ is the detour order of vertex $v_i$ on the cycle
when paths $P_i$ are attached  to vertices $v_i$. This is obvious
since at each step $k_l$ either remains the same if there
is not a longer path from $v_l$ using the 
newly attached  path $P_i$, or is
replaced by the order of a longest path from $v_l$ 
which uses the newly attached path $P_i$. The detour orders
of the vertices which do not belong to the cycle 
are obtained by comparing $k_i$ with $p_i$ as follows:
\begin{enumerate}
\item If $k_i > p_i +1$ then the  vertices of $P_i$ have detour
orders $k_i +1, k_i+2, \ldots, k_i + p_i$. 
This follows from Lemma~\ref{not-extreme}. 
\item If $k_i = p_i +1$ (the only other possibility by the algorithm),
then the detour orders on $P_i$ first
decrease to a minimum away from the cycle and then increase to a maximum.
We can use Lemma~\ref{max-min} to
obtain this minimum and maximum as follows: (In the following,
if there is no attached path to vertex $v_i$ of the cycle, take
$p_i =0$.) Let $M = \max \{k_i + p_i\}$, where the maximum is 
taken over all vertices $v_i$ of the cycle 
such that $p_i + 1 < k_i$. 
%If no such
%vertices exist, take the maximum over all vertices of
%the cycle. 
Let $m = \left\lceil \dfrac{M+1}{2} \right\rceil$.
If $M$ is odd then the detour orders of vertices on $v_i,P_i$ are 
$k_i,k_i -1, k_i - 2, \ldots,m-1, m, m+1, m+2, \ldots, 2m-1$.
If $M$ is even, the minimum $m$ occurs twice, and the
detour orders on $v_i,P_i$ are
$k_i, k_i-1, k_i-2, \ldots, m,m,m+1,m+2, \ldots, 2m-2$.
Note that it may be that $m = k_i$.
\end{enumerate}   
%We will call a detour sequence obtained by this algorithm a 
%\textit{path-unicyclic detour sequence}.
%\index{path-unicyclic detour sequence}

\noindent \textbf{Example.}\\
Suppose that the cycle has order 8, and we attach paths of order
4, 3, and 7 to vertices 
$v_3$, $v_6$ and $v_8$ respectively. After attaching the path
of order 4 to $v_3$ the algorithm gives:
\[
k_1 = 11, k_2 =12, k_3 = 8, k_4=12, k_5=11, k_6=10, k_7 =9, k_8 =10.
\]
Now attach the path of order 3 to vertex $v_6$. By the algorithm
we get:
\[
k_1 =11, k_2 =12, k_3 =9, k_4 =12, k_5 =11, k_6 =10, k_7 =11, k_8 =10
\]
Now attach the path of order 7 to $v_8$. 
The algorithm gives:
\[
k_1 =15, k_2 =14, k_3 =13, k_4 = 12, k_5 =13, k_6 =14, k_7 = 15, k_8 =10
\]
Since $k_3 > 5$, $k_6 > 4$ and $k_8 > 8$ the 
vertices of the path of order 4 
have detour orders 14, 15, 16, 17;  
the vertices of the path of order 3 
have detour orders 15,16,17; 
and the vertices of the path of
order 7 have detour orders 11,12,13,14,15,16,17.
The detour sequence is therefore
\[
10, 11, (12)_2, (13)_3, (14)_4, (15)_5, (16)_3, (17)_3.
\]
As expected, we see that every detour order 
between the least and the 
greatest occurs in this 
detour sequence, i.e. the detour sequence
is full. \hspace*{\fill}\qd

Another family of graphs with full detour sequences
are complete multipartite graphs, \index{complete multipartite graph} 
defined as follows:
\begin{definition}\label{multipartite}
A graph $G$ is a complete multipartite graph
if the vertex set $V(G)$ can be partitioned into $k \geq 2$
subsets $V_1,V_2,\ldots,V_k$ such that every element
of $E(G)$ joins a vertex of $V_i$ to a vertex of $V_j$,
$i \neq j$, and if $u \in V_i$ and $v \in V_j$
then $uv \in E(G)$.
\end{definition}
We denote a complete  multipartite graph
which has  $|V_i| = n_i$, 
$i=1,2,\ldots,k$ by $K(n_1,n_2,\ldots,n_k)$
or $K_{n_1,n_2,\ldots,n_k}$. The complete  multipartite graph
$K_{1,3}$ is  called the \textit{claw}\index{claw}.
The vertex
in $K_{1,3}$ which has degree three is called 
the \textit{centre}\index{centre of a claw}
of the claw.

We can determine the detour sequences of complete
multipartite graphs as follows:
\index{detour sequence of complete,\\ multipartite graphs} 
\begin{theorem}\label{multi-seq}
Let $G = K(n_1,n_2,n_3,\ldots,n_p)$, where $p \geq 2$
and $0 \leq n_1 \leq n_2 \leq n_3 \leq \ldots \leq n_p$.
Let $N = n_1+n_2+n_3+\ldots +n_p$.
Then the following holds:  
\begin{enumerate}
\item[(a)] If $2n_p \leq N$ then $G$ 
has detour sequence $(N)_N$.
\item[(b)] If $2n_p > N$ then the detour sequence of $G$
is $(2(N -n_p))_{N-n_p},\ (2(N-n_p)+1)_{n_p}$.
\end{enumerate}
\end{theorem}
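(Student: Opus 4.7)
Let $V_1,V_2,\ldots,V_p$ denote the partite sets of $G$ with $|V_i|=n_i$, and write $V_p$ for the largest class. The plan is to split along the dichotomy $2n_p \le N$ versus $2n_p > N$ and, in each case, determine the detour order of an arbitrary vertex by (i) producing an explicit path of the claimed length and (ii) showing no longer path exists.

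For part (a), assume $2n_p \le N$. I will show $G$ is hamiltonian; since every hamiltonian graph has constant detour sequence $(N)_N$, this settles the case. The construction is the standard one for complete multipartite graphs: order the vertices by cyclically rotating through the classes, picking one vertex from the largest remaining class at each step, so that no two consecutive vertices ever lie in the same class. The inequality $n_p \le N - n_p$ is exactly what guarantees that the largest class never runs out early and that one can close the resulting sequence into a Hamilton cycle. A Hamilton cycle through every vertex then yields a Hamilton path starting at every vertex, giving $\tau_G(v)=N$ for all $v$.

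For part (b), assume $2n_p > N$, and set $s = N - n_p$, so that $n_p > s$. Every path in $G$ alternates between $V_p$ and $V\setminus V_p$ in the following sense: if $P$ visits $a$ vertices of $V_p$ and $b$ vertices of $V\setminus V_p$, then $|a-b|\le 1$, since any two consecutive vertices on $P$ lie in different classes and the only way to have two vertices of $V\setminus V_p$ adjacent on $P$ is for them to lie in distinct smaller classes. From $a\le n_p$ and $b\le s$ together with $|a-b|\le 1$, I will read off the bounds:

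\begin{itemize}
\item A path starting and ending in $V_p$ has $a = b+1$ and hence order at most $2s+1$.
\item A path starting in $V_p$ and ending in $V\setminus V_p$ has $a=b$ and order at most $2s$.
\item A path starting and ending in $V\setminus V_p$ has $b = a+1\le s$, so $a\le s-1$, giving order at most $2s-1$.
\end{itemize}

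This shows that for $v\in V_p$, $\tau_G(v)\le 2s+1$, and for $v\in V_i$ with $i\ne p$, $\tau_G(v)\le 2s$. To see these bounds are attained, for $v\in V_p$ I construct a path $v,u_1,w_1,u_2,w_2,\ldots,u_s,w_s$ where $u_1,\ldots,u_s$ enumerate $V\setminus V_p$ (choosing the order so that consecutive $u_j$'s come from different classes, which is possible because $n_i\le n_p$ for every $i$) and $w_1,\ldots,w_s$ are distinct vertices of $V_p\setminus\{v\}$; the hypothesis $n_p \ge s+1$ provides enough vertices in $V_p$. For $v\in V_i$ with $i\ne p$, I start instead $v,w_1,u_2,w_2,u_3,\ldots$, alternating and ending in $V_p$, to realise a path of order $2s$.

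The main obstacle is the bookkeeping for the explicit paths: one must verify that the non-$V_p$ vertices can be ordered so that consecutive ones lie in different classes (equivalent to the fact that $K(n_1,\ldots,n_{p-1})$ has a Hamilton path, which follows from $n_i\le n_p\le s+(s+1-n_p)$ and an elementary induction). Once this ordering is available, the length counts and the detour order of each vertex follow immediately, yielding exactly $N-n_p$ vertices of detour order $2(N-n_p)$ and $n_p$ vertices of detour order $2(N-n_p)+1$, as claimed. \hspace*{\fill}\qd
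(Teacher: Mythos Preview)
Your overall strategy for part (b) matches the paper's alternation argument, and your final bounds are correct, but there is a genuine error in the upper-bound step. The claim that $|a-b|\le 1$ is false: two vertices of $V\setminus V_p$ lying in different small classes are adjacent, so a path may contain arbitrarily long runs inside $V\setminus V_p$, and $b$ can exceed $a+1$ by any amount. What \emph{is} true is only the one-sided inequality $a\le b+1$, which follows because no two vertices of $V_p$ are adjacent. That one-sided bound suffices: together with $b\le s$ it gives $a+b\le 2s+1$ for every path, and if the path starts at a vertex outside $V_p$ then there is already one non-$V_p$ vertex before the first $V_p$ vertex, forcing $a\le b$ and hence $a+b\le 2s$. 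Your three bullet points should be rewritten along these lines; as written, the equalities $a=b+1$, $a=b$, $b=a+1$ are not justified.

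Your final paragraph also introduces an unnecessary complication. In the paths you actually construct, the non-$V_p$ vertices $u_j$ are always separated by $V_p$ vertices $w_j$, so there is no need whatsoever to arrange that consecutive $u_j$ lie in different small classes, and no need to appeal to a Hamilton path in $K(n_1,\ldots,n_{p-1})$. Any enumeration of $V\setminus V_p$ works.

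For part (a), the paper takes a shorter route than you do: since $\delta(G)=N-n_p\ge N/2$ and $N\ge 3$, Dirac's theorem yields a Hamilton cycle immediately. Your greedy construction is a legitimate alternative, but as written it is only a sketch; in particular, closing the cycle (ensuring the last vertex chosen is not in the same class as the first) requires an argument you have not supplied. Citing Dirac avoids that bookkeeping entirely.
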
   
\begin{proof}
The theorem is obviously true for $N \leq 2$. We assume
that $N \geq 3$. 
\begin{enumerate}
\item[(a)] We have $(N - n_p) \geq  \dfrac{N}{2}$.
Since $\delta(G) = N - n_p$ we get $\delta(G) \geq \dfrac{N}{2}$.
Hence by the well known theorem of Dirac~\cite{dirac} it 
follows that $G$ is hamiltonian.  Note that $N \geq 3$.

\item[(b)] Let $V_p$ be a partite set with $|V_p| = n_p$.
A longest path from $v \in V_p$ uses vertices
from $V_p$ and $V(G) \setminus V_p$ alternately.
Since $|V(G) \setminus V_p| = N -n_p$, and such a path starts
and ends in $V_p$, its order is
$2(N-n_p)+1$.

Let  $w \in V(G) \setminus V_p$
be  a vertex 
in  a partite set $V_j$, where $j \ne p$.
Then a longest path from $w$ also use vertices
from $V(G) \setminus V_p$ and $V_p$ alternately,
but now such a path starts in $V_j$ and ends
in $V_p$, so its order is
$2(N-n_p)$.\  \hspace*{\fill}\qd
\end{enumerate}
\end{proof}
\begin{corollary}
A sequence is a detour sequence of a complete
multipartite graph if and only if it has one of the two
following forms:
\begin{enumerate}
\item $(N)_N$, where $N$ is a positive integer.
\item $(2n)_n, (2n+1)_m$, where $m$, $n$ are positive integers
and $m > n$.
\end{enumerate}
\end{corollary}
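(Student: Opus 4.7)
The plan is to derive both implications directly from Theorem~\ref{multi-seq}.

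For the ``only if'' direction, suppose $D$ is the detour sequence of a complete multipartite graph $G = K(n_1, n_2, \ldots, n_p)$ with $N = n_1 + n_2 + \cdots + n_p$ vertices, indexed so that $n_1 \le n_2 \le \cdots \le n_p$. Theorem~\ref{multi-seq} distinguishes exactly two mutually exclusive cases. If $2n_p \le N$ then $D = (N)_N$, which is form (1). If instead $2n_p > N$, then
\[
D = (2(N - n_p))_{N-n_p},\ (2(N-n_p)+1)_{n_p}.
\]
Setting $n = N - n_p$ and $m = n_p$, the hypothesis $2n_p > N$ rewrites as $2m > n + m$, equivalently $m > n$; thus $D$ has form (2), and both $n$ and $m$ are positive since at least two parts of $G$ are non-empty.

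For the ``if'' direction, it suffices to exhibit for each sequence of the stated forms a complete multipartite graph realising it. Given $(N)_N$ with $N \ge 2$, take $G = K_N$, viewed as the complete multipartite graph $K(1, 1, \ldots, 1)$ with $N$ parts of size one. Here $n_p = 1$ and $2 \le N$, so Theorem~\ref{multi-seq}(a) yields detour sequence $(N)_N$. Given $(2n)_n, (2n+1)_m$ with $m > n \ge 1$, take the complete bipartite graph $G = K(n, m)$. Then $N = n + m$ and $n_p = m$, and $m > n$ gives $2m > n + m = N$, so Theorem~\ref{multi-seq}(b) applies and yields detour sequence
\[
(2(N - m))_{N-m},\ (2(N-m) + 1)_m \;=\; (2n)_n,\ (2n+1)_m,
\]
as required.

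The main obstacle is simply the bookkeeping of parameters between Theorem~\ref{multi-seq} and the corollary, most notably checking that the substitution $n = N - n_p$, $m = n_p$ converts $2n_p > N$ into $m > n$ and preserves positivity. No further combinatorial argument is needed, since the corollary is essentially a repackaging of Theorem~\ref{multi-seq} into a complete characterisation of the realisable detour sequences.
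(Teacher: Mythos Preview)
Your proof is correct and follows essentially the same approach as the paper's own proof: both directions are read off directly from Theorem~\ref{multi-seq}. The only difference is the witness chosen for form~(2): the paper uses $K(\underbrace{1,\ldots,1}_{n},m)$, while you use the complete bipartite graph $K(n,m)$, which is if anything a slightly cleaner choice. Both proofs tacitly leave the degenerate case $N=1$ of form~(1) aside, but that is an issue with the corollary's statement (a single vertex is not a complete multipartite graph under Definition~\ref{multipartite}) rather than with your argument.
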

\begin{proof}
The ``only if" part of the proof follows directly from 
Theorem~\ref{multi-seq}. We prove the ``if" part:
\begin{enumerate}
\item Suppose we have a sequence of this form. Then the complete 
multipartite graph $K(1,1,1,\ldots,1)$ with $N$ singleton
partite sets has detour sequence $(N)_N$. 
\item The complete multipartite graph 
$K(1,1,1,\ldots,1,m)$ with $n$ singleton partite sets and one
partite set with $m$ vertices, $m >n$, has the required
form for its detour sequence. \hspace*{\fill}\qd
\end{enumerate}
\end{proof}

\section{Detour chromatic numbers}\label{det-chrom-num}
We conclude this chapter by establishing
an upper bound for the
n\emph{th detour-chromatic number} of a 
graph $G$ by using the greatest term in the 
detour sequence of $G$. We first introduce
the relevant terminology and theorems on 
which our result is based.

An $n$\emph{-detour colouring} \index{detour colouring of a graph}
of $G$ is a colouring of the vertices of $G$ such
that no path of order greater than $n$ 
is monocoloured. The $n$\emph{th detour-chromatic number} of $G$, 
\index{detour chromatic number of a graph}
denoted by $\chi_{n}(G)$, is the
minimum number of colours required for an $n$-detour 
colouring of $G$. These
chromatic numbers were introduced by Chartrand, 
Geller and Hedetniemi in 1968
(see \cite{cgh}).

The path of order $n$ is denoted by $P_{n}$.  We say that a 
set $W$ of
vertices in $G$ is $P_{n+1}$\emph{-free } 
\index{$P_{n+1}$\emph{-free} set of vertices}
if $G\langle W \rangle$ 
has detour order at most $n$. 
Thus an $n$-detour colouring
of $G$ corresponds to a partition of the vertex 
set of $G$ into $P_{n+1}$-free sets.

The first detour-chromatic number, 
$\chi_{1},$ is the ordinary chromatic
number $\chi$. The following bound 
for $\chi$ is well-known (see for example 
\cite{cl}, Corollary 8.8, on page 226).

\begin{theorem}
$\chi(G)\leq\tau(G)$ for every graph $G$.
\end{theorem}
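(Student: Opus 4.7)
The plan is to prove the bound via the classical Gallai--Roy style argument: produce an acyclic orientation of $G$ and colour each vertex by the order of the longest directed path ending at it. The number of colours used will automatically be bounded by the order of the longest directed path, which is at most $\tau(G)$ because every directed path in an orientation of $G$ is a path in $G$ in the sense of this thesis.

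First I would fix an arbitrary linear ordering $v_1, v_2, \ldots, v_n$ of $V(G)$ and orient every edge $v_iv_j \in E(G)$ from $v_i$ to $v_j$ whenever $i < j$. This produces an acyclic digraph $D$ on $V(G)$. For each vertex $v$, define $c(v)$ to be the number of vertices in a longest directed path in $D$ that ends at $v$. Since an isolated-looking vertex still counts as a directed path of order $1$, we have $c(v) \geq 1$ for all $v$.

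Next I would verify that $c$ is a proper colouring of $G$. Let $uv \in E(G)$ and suppose without loss of generality that the edge is oriented $u \to v$ in $D$. Let $Q$ be a longest directed path in $D$ ending at $u$, so $|V(Q)| = c(u)$. Because $D$ is acyclic, $v \notin V(Q)$, so appending $v$ to $Q$ produces a directed path ending at $v$ of order $c(u) + 1$. Hence $c(v) \geq c(u) + 1 > c(u)$, and in particular $c(u) \neq c(v)$.

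Finally, I would bound the number of colours. If $M$ denotes the order of a longest directed path in $D$, then by construction $c(v) \in \{1, 2, \ldots, M\}$ for every $v$, so $\chi(G) \leq M$. But any directed path in $D$ is, as an unoriented sequence of vertices, a path in $G$, so $M \leq \tau(G)$, giving $\chi(G) \leq \tau(G)$. I do not expect any genuine obstacle here; the only small point to be careful about is ensuring that the orientation is acyclic (which guarantees $v \notin V(Q)$ in the key step), and this is immediate from orienting according to a linear order of the vertices.
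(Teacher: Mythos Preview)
Your proof is correct. It is the standard Gallai--Roy argument: orient acyclically, colour each vertex by the order of the longest directed path ending there, and observe that this is a proper colouring using at most $\tau(G)$ colours. All the steps are sound, including the acyclicity check that guarantees $v\notin V(Q)$.

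However, the paper takes a different route. It derives the bound from the single observation that $\tau(G-M)\le\tau(G)-1$ for every maximal independent set $M$ of $G$ (any path of order $\tau(G)$ in $G-M$ would be a detour of $G$ avoiding $M$, so its endvertex would have a neighbour in $M$ outside the path, giving a longer path). Iterating, one strips off maximal independent sets and uses each as a colour class; after at most $\tau(G)$ rounds the graph is empty. Your argument is arguably cleaner and non-inductive for the case $n=1$, but the paper's greedy-removal framing is chosen deliberately: it is exactly the template that is then upgraded to maximal $P_{n+1}$-free sets to bound the higher detour chromatic numbers $\chi_n(G)$ in the theorems that follow. Your Gallai--Roy approach does not extend in any obvious way to those generalisations.
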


The above result follows from the 
observation that $\tau(G-M)\leq\tau(G)-1$
for every maximal independent set $M$ of $G$.

The following bound for $\chi_{n}$ appears in \cite{cgh}.

\begin{theorem}
\label{thm:bound1}
If $G$ is any graph and $2\leq n\leq \tau (G)-1$, then 
\begin{equation*}
\chi _{n}(G)\leq 
\left\lfloor \frac{1}{2}(\tau (G)-n-1)\right\rfloor +2.
\end{equation*}
\end{theorem}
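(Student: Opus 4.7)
The plan is induction on $\tau(G)$, extending the standard proof that $\chi(G)\le\tau(G)$ via a maximal independent set $M$ and the observation $\tau(G-M)\le\tau(G)-1$. I would replace ``independent'' by ``$P_{n+1}$-free'' and aim for the stronger drop $\tau(G-W)\le\tau(G)-2$, which is exactly what the $\tfrac{1}{2}$ in the claimed bound reflects. The central lemma is: \emph{if $\tau(G)\ge n+2$, there exists a $P_{n+1}$-free set $W\subseteq V(G)$ with $\tau(G-W)\le\tau(G)-2$.} Granted this lemma, the inductive step gives
\[
\chi_n(G)\le 1+\chi_n(G-W)\le 3+\left\lfloor\tfrac{\tau(G)-n-3}{2}\right\rfloor=\left\lfloor\tfrac{\tau(G)-n-1}{2}\right\rfloor+2,
\]
and the base case $\tau(G)=n+1$ is handled by the same argument under the weaker requirement $\tau(G-W)\le n$, which delivers $\chi_n(G)\le 2$ as demanded by the formula.

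To prove the lemma I take $W$ to be any maximal $P_{n+1}$-free subset of $V(G)$ and argue by contradiction: suppose $G-W$ contains a path $Q\colon u_1,\ldots,u_m$ with $m\ge\tau(G)-1$. Maximality of $W$ forces, for each $u_i\in\{u_1,u_m\}$, a path $R_i$ of order $n+1$ in $G\langle W\cup\{u_i\}\rangle$ containing $u_i$. If $u_i$ is an endvertex of $R_i$, the remaining $n$ vertices of $R_i$ lie entirely in $W$ and append to $Q$ to yield a path in $G$ of order $m+n\ge\tau(G)+n-1>\tau(G)$, contradicting the definition of $\tau(G)$. If $u_i$ is internal to $R_i$ and $n\ge 3$, the longer side of $R_i$ at $u_i$ already supplies $\lceil n/2\rceil\ge 2$ vertices from $W$, and the same one-sided attachment again produces a path longer than $\tau(G)$.

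The one delicate residual case is $n=2$ with both $u_1$ and $u_m$ internal to their respective $R_i$; each $R_i$ is then a $P_3$ whose two halves at $u_i$ are single vertices of $W$. Since the two halves at either endvertex are disjoint, a short pigeonhole argument lets me pick one half at $u_1$ and one half at $u_m$ whose chosen vertices of $W$ are distinct, and a two-sided attachment then yields a path in $G$ of order $\ge m+2>\tau(G)$. Securing this disjointness is the main technical obstacle of the proof; everywhere else the argument is a routine adaptation of the classical chromatic-number proof.
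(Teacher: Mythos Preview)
Your proof is correct and follows essentially the same route the paper attributes to \cite{cgh}: the paper states that the proof ``relies on the observation that $\tau(G-M)\le\tau(G)-2$ for every maximum $P_{n+1}$-free subset $M$ of $G$,'' and you establish precisely this drop (for maximal $M$) and then induct. Your treatment of the residual $n=2$ case is fine, since two $2$-element sets always admit a system of distinct representatives.
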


The proof of Theorem \ref{thm:bound1} relies on the 
observation that $\tau(G-M)\leq\tau(G)-2$ for every maximum 
$P_{n+1}$-free subset $M$ of $G$,
if $2 \leq n \leq \tau(G)$.  In 
\cite{dfb} the following stronger result
was proved:

\begin{theorem}
\label{thm:bound2}
Let $G$ be a graph and $n$ an integer such that $2\leq n\leq\tau(G)$. If $M$
is a maximal $P_{n+1}$-free subset of $V(G)$, then 
\begin{equation*}
\tau(G-M)\leq\tau(G)-\frac{2n+2}{3}\,.
\end{equation*}
\end{theorem}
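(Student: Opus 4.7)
The plan is to extend a detour $L$ of $G-M$ at both endvertices using the maximality of $M$, splitting into the case where the two extensions are vertex-disjoint and the case where they are not.

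For the setup, I would let $L: u_1, u_2, \ldots, u_s$ be a detour in $G-M$, so that $s = \tau(G-M)$. For each endvertex $u \in \{u_1, u_s\}$, the maximality of $M$ guarantees that $G\langle M \cup \{u\}\rangle$ contains a path of order at least $n+1$. Any such path must pass through $u$, for otherwise it would lie entirely in $G\langle M\rangle$, contradicting that $M$ is $P_{n+1}$-free. Hence the component of $u$ in $G\langle M \cup \{u\}\rangle$ has detour order at least $n+1$, and applying Lemma~\ref{distinct}(1) within that component yields a path $P_u$ starting at $u$ of order at least $\lceil (n+2)/2 \rceil$. Writing $P_u = u, R_u$ where $R_u$ is the sub-path lying in $G\langle M\rangle$ with one endpoint adjacent to $u$, I obtain $|R_u| \geq \lceil n/2 \rceil$.

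If $V(R_{u_1}) \cap V(R_{u_s}) = \emptyset$, I would reverse $R_{u_1}$ and concatenate with $L$ and $R_{u_s}$ to obtain a path in $G$ of order at least $s + 2\lceil n/2 \rceil \geq s + n$. Since $n \geq (2n+2)/3$ whenever $n \geq 2$, this already gives $\tau(G) \geq \tau(G-M) + (2n+2)/3$, settling the easy case.

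The difficult case is when $V(R_{u_1}) \cap V(R_{u_s}) \neq \emptyset$. Here I would exploit the constraint $\tau(G\langle M\rangle) \leq n$, which holds because $M$ is $P_{n+1}$-free; this bounds the order of any path in $G\langle M\rangle$ obtained by stitching sub-paths of $R_{u_1}$ and $R_{u_s}$ through a shared vertex. After selecting an intersection vertex $w$ (for instance, the first shared vertex encountered along $R_{u_1}$ starting from $u_1$), I would construct several candidate paths in $G$: partial extensions of $L$ using the sub-path of $R_{u_1}$ between $u_1$ and $w$, which is disjoint from $R_{u_s}$ by the choice of $w$, together with all of $R_{u_s}$; the symmetric candidate with the roles of $u_1$ and $u_s$ swapped; and a path obtained by opening the cycle formed in $G$ by traversing $R_{u_1}$ from $u_1$ to $w$, then $R_{u_s}$ from $w$ back to $u_s$, and finally $L$ in reverse. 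Using the bound $\tau(G\langle M\rangle) \leq n$ to control the overlap sizes and the inequalities $|R_{u_1}|, |R_{u_s}| \geq \lceil n/2 \rceil$, I would aim to show that the total order of three judiciously chosen candidates is at least $3s + 2(n+1)$, so that by averaging at least one has order at least $s + (2n+2)/3$.

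The hard part will be the technical case analysis in the difficult case: one must choose the intersection vertex $w$ so that each candidate path is a valid simple path, handle the situation where $R_{u_1}$ and $R_{u_s}$ share several vertices, and verify the promised total bound on the three candidate orders. The factor $\tfrac{2}{3}$ in the bound $(2n+2)/3$ reflects exactly the need to distribute the combined extension budget of $2(n+1)$ (one $n+1$ contributed by each of the paths $P_{u_1}, P_{u_s}$) among three candidate paths rather than two, which captures the structural effect of the overlap.
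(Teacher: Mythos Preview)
The paper does not actually prove Theorem~\ref{thm:bound2}; it quotes the result from \cite{dfb} and then uses it. So there is no ``paper's own proof'' to compare against, and your proposal has to stand on its own.

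Your disjoint case is fine: if $R_{u_1}$ and $R_{u_s}$ are vertex-disjoint, the concatenation gives a path of order at least $s+n\ge s+(2n+2)/3$.

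The gap is in the overlapping case. You only promise to ``aim to show'' that three candidates have total order at least $3s+2(n+1)$, but with the information you have retained this is not attainable. Writing $R_{u_1}=a_1,\dots,a_p$, $R_{u_s}=b_1,\dots,b_q$ and letting $w=a_i=b_j$ be the first vertex of $R_{u_1}$ lying on $R_{u_s}$ (and symmetrically $w'=b_{j'}$ the first vertex of $R_{u_s}$ on $R_{u_1}$), your three candidates have orders $s+q+i-1$, $s+p+j'-1$, and $s+i+j-1$. Their sum is $3s+p+q+2i+j+j'-3$. The only lower bounds your setup provides are $p,q\ge\lceil n/2\rceil$ and $i,j,j'\ge1$, which yields a total of at most about $3s+n+1$, far short of $3s+2n+2$. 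The constraint $\tau(G\langle M\rangle)\le n$ that you invoke gives \emph{upper} bounds such as $i+j-1\le n$, not the lower bounds you need.

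The loss occurs when you pass from the guaranteed $P_{n+1}$ through $u$ to a single half $R_u$ via Lemma~\ref{distinct}: you discard the other half of that path. A configuration in which each $P_{n+1}$ splits as two segments of order $\lceil n/2\rceil$ on either side of $u$, with the chosen halves $R_{u_1},R_{u_s}$ meeting at their very first vertices, makes all three of your candidates have order roughly $s+\lceil n/2\rceil$, and the averaging fails. The argument in \cite{dfb} needs both halves $X_u,Y_u$ of the $P_{n+1}$ through each endvertex (so $|X_u|+|Y_u|\ge n$), and then plays the four ``end-segments'' $X_{u_1},Y_{u_1},X_{u_s},Y_{u_s}$ against each other; the factor $2/3$ emerges from that richer combinatorics, not from the three-path averaging you sketch. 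As written, the hard case is a plan rather than a proof, and the plan does not close.
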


This result enables us 
to prove the following theorem:
\begin{theorem}
If $G$ is any graph, then 
\begin{equation*}
\chi_{n}(G)\leq\left\{ 
\begin{array}{ll}
\left\lceil \frac{\tau(G)-n}{\lceil(2n+2)/3\rceil}\right\rceil +1 & 
\mbox{
if }2\leq n\leq\tau(G) \\ 
1 & \mbox{ if }n > \tau(G).
\end{array}
\right.
\end{equation*}
\end{theorem}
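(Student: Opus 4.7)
The plan is to use Theorem~\ref{thm:bound2} iteratively, peeling off one colour class at a time and tracking how much $\tau$ drops each round.

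First I would dispose of the easy case $n > \tau(G)$: since no path of $G$ has order exceeding $\tau(G) < n+1$, the whole vertex set $V(G)$ is itself $P_{n+1}$-free, so one colour suffices and $\chi_n(G) \le 1$.

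Now suppose $2 \le n \le \tau(G)$. Set $G_0 = G$ and, recursively, let $M_i$ be a maximal $P_{n+1}$-free subset of $V(G_{i-1})$ and put $G_i = G_{i-1} - M_i$. By Theorem~\ref{thm:bound2} applied in $G_{i-1}$ (valid as long as $2 \le n \le \tau(G_{i-1})$), we get
\[
\tau(G_i) \le \tau(G_{i-1}) - \frac{2n+2}{3}.
\]
Since $\tau$ takes integer values, this sharpens to $\tau(G_i) \le \tau(G_{i-1}) - \lceil (2n+2)/3 \rceil$, and by induction
\[
\tau(G_k) \le \tau(G) - k\left\lceil \tfrac{2n+2}{3} \right\rceil
\]
as long as every preceding step satisfied $\tau(G_{i-1}) \ge n+1$. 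Let $k$ be the least nonnegative integer with $\tau(G) - k\lceil (2n+2)/3 \rceil \le n$, that is,
\[
k = \left\lceil \frac{\tau(G)-n}{\lceil (2n+2)/3 \rceil} \right\rceil .
\]
Then $\tau(G_k) \le n$, so $V(G_k)$ is $P_{n+1}$-free and can be given a single additional colour. The colour classes $M_1,\dots,M_k$ together with $V(G_k)$ form an $n$-detour colouring of $G$ using $k+1$ colours, which yields the claimed bound.

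The only subtlety is confirming that Theorem~\ref{thm:bound2} may legitimately be invoked at each step, i.e.\ that $\tau(G_{i-1}) \ge n$ whenever we still need to remove a class; this is guaranteed by the choice of $k$ as the \emph{first} index where $\tau(G_k) \le n$, so all earlier applications are valid. The integer-ceiling sharpening $\lceil (2n+2)/3 \rceil$ is what turns the real-valued bound of Theorem~\ref{thm:bound2} into the stated ceiling denominator, and is the only place care is needed.
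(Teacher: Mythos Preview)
Your proof is correct and is essentially the paper's argument unrolled: the paper proceeds by induction on $\tau(G)$, removing one maximal $P_{n+1}$-free set, applying Theorem~\ref{thm:bound2}, and then doing a short case analysis, while you iterate the removal explicitly and count steps. The only slip is that you define $k$ arithmetically as $\lceil(\tau(G)-n)/\lceil(2n+2)/3\rceil\rceil$ but in the last paragraph describe it as ``the first index where $\tau(G_k)\le n$''; these need not coincide, but since the latter is always at most the former the bound survives.
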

\begin{proof}
We use induction on $\tau(G)$. The result obviously holds for all graphs $K$
with $\tau(K)=2$. Suppose the result is true for all graphs $H$ 
with $\tau(H)<k$ for some $k>2$. Let $G$ be an arbitrary graph with 
$\tau(G)=k$. If $n\geq k$ the 
result holds for $G$, so we may suppose that $n<k$. Let $M$ be
a maximal $P_{n+1}\mathrm{-free}$ subset of $V(G)$. 
By Theorem~\ref{thm:bound2} 
\begin{equation*}
\tau(G-M)\leq k-\left\lceil \frac{2n+2}{3}\right\rceil <k
\end{equation*}
and therefore, by the induction assumption, 
\begin{equation*}
\chi_{n}(G-M)\leq\left\{ 
\begin{array}{cc}
\left\lceil \dfrac{\tau(G)-\left\lceil \frac{2n+2}{3}\right\rceil -n}
{\left\lceil (2n+2)/3\right\rceil }\right\rceil +1 
& \mathrm{if}\quad2\leq
n\leq\tau(G-M) \\ 
1 & \mathrm{if}\quad n>\tau(G-M).
\end{array}
\right.
\end{equation*}
By including the subset $M$ in any $P_{n+1}\mathrm{-free}$ 
partition of $G-M$
we get a $P_{n+1}\mathrm{-free}$ partition of $G$. Hence 
\begin{equation*}
\chi_{n}(G)\leq\chi_{n}(G-M)+1.
\end{equation*}
We now verify that the inequality for $\chi_{n}(G)$ holds for all the
possible values for $n$. First, if $2\leq n\leq\tau(G-M)$ then 
\begin{align*}
\chi_{n}(G) & \leq\left\lceil \dfrac{\tau(G)-\left\lceil \frac{2n+2}{3}
\right\rceil -n}{\left\lceil (2n+2)/3\right\rceil }\right\rceil +1+1 \\
& =\left\lceil \dfrac{\tau(G)-n}{\left\lceil (2n+2)/3\right\rceil }%
\right\rceil +1.
\end{align*}
Next, if $\tau(G-M)<n\leq\tau(G)-\lceil(2n+2)/3\rceil$ then 
\begin{align*}
\chi_{n}(G) & \leq1+1 \\
& \leq\left\lceil \dfrac{\tau(G)-n}{\left\lceil (2n+2)/3\right\rceil }
\right\rceil +1
\end{align*}
because in this case 
\begin{equation*}
\frac{\tau(G)-n}{\left\lceil (2n+2)/3\right\rceil }\geq1.
\end{equation*}
Finally, if $\tau(G)-\lceil(2n+2)/3\rceil < n<k$ then 
\begin{align*}
\chi_{n}(G) & \leq1+1 \\
& =\left\lceil \dfrac{\tau(G)-n}{\left\lceil (2n+2)/3\right\rceil }%
\right\rceil +1
\end{align*}
because in this case 
\begin{align}
0<\frac{\tau(G)-n}{\left\lceil (2n+2)/3\right\rceil } < 1. \tag*{\qd}
\end{align}
\end{proof}

\chapter{Connected, Nontraceable, Detour Graphs}

\section{Introduction}
\begin{comment}
Homogeneously traceable graphs are therefore detour graphs
with detour deficiency zero.
 These constructions are
used to give examples of nontraceable detour graphs
with chromatic number $k$, $k \geq 2$, and girths up to 7.
Moreover it is shown that, for all positive integers
$l \geq 1$ and $k \geq 3$, there are nontraceable 
detour graphs with chromatic number $k$ and 
detour deficiency $l$. 
\end{comment}
In this chapter we are concerned with connected, nontraceable, 
graphs which have constant detour sequences, that is,
they have the largest possible number of 
repetitions in their detour sequences. A graph is called
a \textit{detour graph}\index{detour graph}
if its detour sequence is constant. 
We begin by introducing more graph terminology.

A path in a graph $G$ is called a 
\textit{hamiltonian path} \index{hamiltonian path} if it 
contains all the vertices of $G$. A graph which
contains a hamiltonian path is said
to be \textit{traceable}.\index{traceable graph} 
Similarly, a cycle in $G$ is called 
a {\it hamiltonian cycle} \index{hamiltonian cycle}
if it contains 
all the vertices of $G$. 
In such a case $G$ is called a \textit{hamiltonian} graph. 
\index{hamiltonian graph}
A graph $G$ 
is called \textit{hamiltonian connected} 
\index{hamiltonian connected graph} if each 
pair of distinct vertices $u,v$ are endvertices of 
a hamiltonian path in $G$ 
(see \cite{sk84}). A graph $G$ is said to be 
\textit{homogeneously traceable} 
\index{homogeneously traceable graph} if every vertex is an initial 
vertex of a hamiltonian path in $G$. 
Thus a homogeneously traceable graph is a detour
graph with detour deficiency zero.

A \textit{component}\index{component of a graph}
of a graph $G$ is a maximal connected subgraph of $G$,
and a \textit{cut-vertex}\index{cut-vertex of a graph}
of $G$ is a vertex $v \in V(G)$ such that
$G-v$ has more components than $G$. 
A connected graph $G$
is \textit{2-connected}\index{2-connected graph}
if $G$ has no cut-vertices
and $|V(G)| \ge 3$.

Lastly, a graph operation that will be useful in our
constructions of detour graphs is the 
\textit{cartesian product} 
\index{cartesian product of graphs} of two graphs
$G_1$ and $G_2$, denoted by $G_1 \times G_2$.
The vertex set of $G_1 \times G_2$ is
$V(G_1) \times V(G_2)$, and two vertices
$(u_1,u_2)$ and $(v_1, v_2)$ are adjacent
if and only if either $u_1 = v_1$
and $u_2v_2 \in E(G_2)$, 
or $u_2 = v_2$ and $u_1v_1 \in E(G_1)$.

The study of nonhamiltonian, homogeneously traceable
graphs (abbreviated to NHHT graphs) \index{NHHT graphs}
was initiated by Skupi\'{e}n in 1975
(see \cite{sk84}, \cite{sk80}), and continued
by Chartrand, Gould and Kapoor~\cite{chago79}.
From an existence theorem proved in \cite{chago79} 
we know that there are no
NHHT graphs of order
$3,4,\dots,8$, but that NHHT graphs
exist for all orders greater than 8 (see also \cite{sk84}).
The class of connected, nontraceable, detour graphs 
(abbreviated to CND graphs)\index{CND graphs} 
shares many of the properties of NHHT 
graphs, and CND graphs can be seen as a
natural generalisation of NHHT graphs.

In Section~\ref{properties} we state some properties of
CND graphs which are shared by NHHT
graphs, and we use these
properties to show that the detour order
of a CND graph must be greater than eight.

In Section \ref{construct} 
we establish the existence of
CND graphs by giving constructions for
infinite families of CND graphs of all orders
greater than 17, and all detour deficiences greater
than zero. These constructions also give
examples of CND graphs with girths up to 7.

In
Section~\ref{bipartite} 
we give another construction
for CND graphs.
We use this  construction 
to show that for arbitrary positive 
integers $k \geq 2$ and $l \geq 1$ there
exist CND graphs with chromatic number $k$
and detour deficiency at least $l$.
The results in this section were obtained
in collaboration  with G. Semani{\v s}in
and R. Vla{\v c}uha.
In Section~\ref{open} we conclude this chapter by 
stating some open problems.

\section{Properties of CND graphs}
\label{properties}

The following simple, but useful,
lemma gives some properties of detours in 
nontraceable
graphs that will be required in the sequel.

\begin{lemma}
\label{obvious}
Let $P : v_1,v_2, v_3, \ldots, v_{p-1}, v_p$
be a detour in a 
nontraceable graph $G$.
Let $H$ be a component of $G - V(P)$.
Suppose that  $v_l \in V(P)$ is adjacent to a vertex
$w \in V(H)$,  and a vertex $v_k \in V(P)$
is adjacent to a vertex $u \in V(H)$
(w=u is allowed),
where $l > k$.
Then
\begin{enumerate}
\item $v_k$ and $v_l$ are not
consecutive vertices
of $P$ i.e. $l \geq k+2$.

\item $v_k \neq v_1$ and $v_l \neq v_p$.
\item  $v_1$
is not adjacent to $v_{l-1}$ or $v_{l+1}$
or $v_{k+1}$, and 
$v_p$ is not adjacent to $v_{k-1}$ 
or $v_{k+1}$ or $v_{l-1}$.

\item $v_1$ is not adjacent to $v_p$.

\item If $v_1$ is adjacent to $v_j$, then $v_p$ is
not adjacent to $v_{j-1}$.
\end{enumerate}
\end{lemma}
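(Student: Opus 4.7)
The plan is to prove each of the five assertions by contradiction: assume the forbidden configuration holds and then exhibit a path in $G$ of order strictly greater than $p$, contradicting the fact that $P$ is a detour. The common mechanism is a standard rerouting trick: combine the edges of $P$ with the bridge edges $v_k u$ and $v_l w$, possibly a $u$--$w$ path inside the connected subgraph $H$ (trivial if $u = w$), and the extra edge that is being forbidden, then recognise the result as either a path longer than $P$ or as a Hamilton cycle on $V(P)$ that can be broken and extended by a vertex of $H$.

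I would dispatch the parts in the order they are stated. For (1), if $l = k+1$, replacing the single edge $v_k v_l$ of $P$ by a $u$--$w$ path in $H$ immediately produces a longer path. For (2), if $v_k = v_1$ then $u, v_1, v_2, \ldots, v_p$ already has order $p + 1$, and the case $v_l = v_p$ is symmetric. These two statements are crucial for what follows because they guarantee $k \geq 2$, $l \leq p-1$, and $l \geq k + 2$, which keeps all the indices in subsequent constructions in valid range.

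For (4), the edge $v_1 v_p$ together with $P$ forms a Hamilton cycle on $V(P)$; cutting an edge incident to $v_k$ yields a Hamilton path on $V(P)$ with $v_k$ as an endvertex, and prepending $u$ produces a path of order $p + 1$. Part (5) is the same argument applied to the Hamilton cycle
\[v_1, v_j, v_{j+1}, \ldots, v_p, v_{j-1}, v_{j-2}, \ldots, v_1\]
formed by $P$ together with the two extra edges $v_1 v_j$ and $v_p v_{j-1}$.

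Part (3) has six forbidden adjacencies, but all are handled by a single reversal template. For instance, for $v_1 \sim v_{l-1}$ I would write down the path
\[v_{k+1}, v_{k+2}, \ldots, v_{l-1}, v_1, v_2, \ldots, v_k, u, \ldots, w, v_l, v_{l+1}, \ldots, v_p,\]
which uses every vertex of $P$ together with at least one vertex of $H$, giving order at least $p + 1$. For $v_1 \sim v_{l+1}$ or $v_1 \sim v_{k+1}$ the same edge is used to reverse an initial segment of $P$, after which the bridge $v_l w$ or $v_k u$ supplies an extra vertex of $H$ at the new endpoint; the three $v_p$-statements are the mirror images, reversing the tail of $P$ instead. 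The only real obstacle is bookkeeping — checking that no vertex is repeated in the constructed paths and that indices such as $l - 1$, $k + 1$, and $j - 1$ are distinct and in range — and this is precisely what parts (1) and (2) are used for.
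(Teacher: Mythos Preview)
Your proposal is correct and follows essentially the same approach as the paper: each part is proved by contradiction, exhibiting either a longer path directly or a cycle on $V(P)$ that can be broken at $v_k$ and extended by $u$. The explicit paths you write down differ only cosmetically from the paper's (yours for $v_1\sim v_{l-1}$ is a rotation of the paper's, and your treatment of $v_1\sim v_{k+1}$ and $v_1\sim v_{l+1}$ is in fact slightly simpler, using a single bridge edge rather than both), but the mechanism is identical.
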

\begin{proof}
In this proof $L$ denotes any path in $H$ joining
the vertices $u$ and $w$.
\begin{enumerate}
\item Suppose that $v_k$ and $v_l$ 
are consecutive vertices on $P$.
Then
$v_1,v_2,\ldots,v_k,L,$\newline
$v_l,\ldots,v_p$ 
is a path longer than $P$, which 
contradicts the fact that $P$ is a detour.

\item If $v_k = v_1$ (i.e. if $k=1$) then $v_1$ has a
neighbour which is not a member of $V(P)$ and we get a path
longer than $P$. A similar argument holds
if $v_p = v_l$. 

\item If $v_1$ is adjacent to $v_{l-1}$ then the
path 
$v_{k-1},\ldots,v_1,v_{l-1}, v_{l-2},\ldots, v_k,L,v_l,\ldots,v_p$
is longer than $P$.
See  Figure~\ref{longer-1}. 
If $v_1$ is adjacent to $v_{l+1}$ then the path 
$v_{k+1},v_{k+2},\ldots,v_l,L,v_k,v_{k-1},
\ldots,v_1,v_{l+1},v_{l+2},\ldots,v_p$
is longer than $P$. If $v_1$ is adjacent to $v_{k+1}$ then the path
$v_p,\ldots,v_l,L,v_k,v_{k-1},
\ldots,v_1,v_{k+1},v_{k+2},\ldots,v_{l-1}$
is longer than $P$. Thus $v_1$ is not adjacent to $v_{l-1}$,
$v_{l+1}$, or $v_{k+1}$, and, by symmetry, $v_p$ is not adjacent to
$v_{k-1}$,$v_{k+1}$ or $v_{l-1}$.
\begin{figure}[H]
\begin{center}
\psfrag{a}[c][c]{$v_1$}
\psfrag{b}[c][c]{$v_k$}
\psfrag{c}[c][c]{$v$}
\psfrag{d}[c][c]{$v_{l-1}$}
\psfrag{e}[c][c]{$v_l$}
\psfrag{f}[c][c]{$v_p$}
\psfrag{g}[c][c]{$H$}
\psfrag{h}[c][c]{$u$}
\psfrag{i}[c][c]{$w$}
\psfrag{j}[c][c]{$v_{k-1}$}
\psfrag{k}[c][c]{$v_{k+1}$}
\psfrag{l}[c][c]{$v_{l+1}$}
\epsfig{file=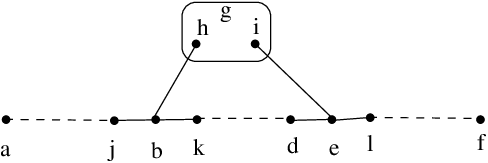}
\end{center}
\caption{\label{longer-1} Diagram for Lemma~\ref{obvious}, Part 3}
\end{figure}

\item If $v_1$ is adjacent to $v_p$
then the vertices of $P$ form a cycle
of order $p$. It follows that $\tau(u) > p$,
and we get a path longer than $P$.

\item If $v_p$ is adjacent to $v_{j-1}$ then $G$
contains the cycle 
$v_1,v_2,\ldots,v_{j-1},v_p, v_{p-1},\ldots,v_j,v_1$
of order $p$. See Figure~\ref{longer}. 
As in (4) above, this implies that
$P$ is not a detour in $G$.  \hspace*{\fill}\qd
\end{enumerate} \end{proof}
\begin{figure}[H]
\begin{center}
\psfrag{a}[c][c]{$v_1$}
\psfrag{b}[c][c]{$v_{j-1}$}
\psfrag{c}[c][c]{$v_j$}
\psfrag{d}[c][c]{$v_p$}
\epsfig{file=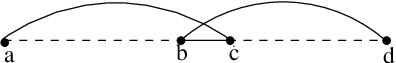}\\
\caption{\label{longer} Diagram for Lemma~\ref{obvious}, Part 5}
\end{center}
\end{figure}
As we have already stated, many of the properties 
of NHHT graphs are shared by
CND graphs. The proof of
the following theorem is very similar
to the proofs given in 
Skupie\'{n}~\cite{sk84}, Skupie\'{n}~\cite{sk81} and 
Chartrand, Gould and Kapoor~\cite{chago79} for NHHT graphs. 
One has
only to replace,
where necessary,
the order of the graph with the
detour order. But since these results are scattered amongst
the above references, we include the proofs to make the
thesis self-contained.
\begin{theorem}\label{prop-th}
Let $G$ be a CND 
graph of order $n$.
Then
\begin{enumerate}
\item $G$ is $2$-connected (hence $\delta (G) \ge 2$).
\item Every vertex of $G$ has at most one neighbour of degree 2.
\item $G$ contains a detour $P$ such that the endvertices
of $P$ each have degree at least three.
\item $\Delta (G) \le \tau(G)-4$.
\item If $T$ is the set of vertices of degree 2 in $G$, then
$|V(G) \setminus T | \ge |T|$.
\item $|E(G)| \ge \left\lceil \dfrac{5n}{4} \right\rceil$.
\end{enumerate}
\end{theorem}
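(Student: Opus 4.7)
The plan is to prove the six parts in order, since parts (5) and (6) are straightforward counting consequences of the earlier structural results, while parts (1)--(4) are the real content.

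For part (1), I would argue by contradiction. Suppose $v$ is a cut-vertex and $C_1,\ldots,C_k$ ($k\ge 2$) are the components of $G-v$. Let $a_i$ denote the order of a longest path in $G\langle C_i\cup\{v\}\rangle$ starting at $v$; note $a_i\ge 2$ for all $i$ since every $C_i$ is nonempty. Then $\tau(v)=\max_i a_i$, whereas concatenating a longest $v$-initial path in two different components yields a path in $G$ of order $a_i+a_j-1$, so $\tau(G)\ge a_1+a_2-1\ge \tau(v)+1$, contradicting that $G$ is a detour graph. Hence $G$ has no cut-vertex, and since $G$ is nontraceable we have $|V(G)|\ge 3$, so $G$ is $2$-connected and therefore $\delta(G)\ge 2$.

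For parts (2) and (3) I would exploit a rotation argument, repeatedly using Lemma~\ref{obvious}. For part (3), take a detour $P:v_1,\ldots,v_p$ maximising $\deg(v_1)+\deg(v_p)$. If $\deg(v_1)=2$, then the second neighbour of $v_1$ lies on $P$ (else extend $P$) and is some $v_j$ with $3\le j\le p-1$ (by Lemma~\ref{obvious} Part~4). The rotated path $v_{j-1},\ldots,v_1,v_j,\ldots,v_p$ is also a detour, and by maximality $\deg(v_{j-1})\le 2$, i.e.\ $\deg(v_{j-1})=2$. But $v_j$ now has three neighbours $v_{j-1},v_{j+1},v_1$, two of which have degree $2$, contradicting part (2). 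Part (2) itself follows by assuming $v$ has two degree-$2$ neighbours $x,y$ with $N(x)=\{v,x'\}$, $N(y)=\{v,y'\}$, analysing the detour starting at $x$ (which must begin $x,v,y,y',\ldots$ by the forced position of $v$ and $y$), deducing $x'\in V(P)$ by the extension argument, and then using the forbidden rotations in Lemma~\ref{obvious} to contradict $\tau(P)=\tau(G)$.

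For part (4), let $v$ be a vertex of maximum degree $d=\Delta(G)$ and let $P:v=v_1,\ldots,v_p$ be any detour starting at $v$. All neighbours of $v$ must lie on $P$ (else extend), say at positions $i_1=2<i_2<\cdots<i_d$. By Lemma~\ref{obvious} Part~5 applied to each $i_j$, the vertex $v_p$ is not adjacent to any of the $d$ vertices $v_{i_j-1}$, and Lemma~\ref{obvious} Part~4 gives $v_p\not\sim v$. Since $v_p$'s neighbours also lie on $P$, we obtain $\deg(v_p)\le p-1-d$. Now among all detours from $v$, choose one maximising $\deg(v_p)$. If $\deg(v_p)=2$, write $N(v_p)=\{v_{p-1},v_m\}$ and form the rotation $v,v_2,\ldots,v_m,v_p,v_{p-1},\ldots,v_{m+1}$, a detour from $v$ ending at $v_{m+1}$. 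By choice, $\deg(v_{m+1})\le 2$, so $v_m$ has two degree-$2$ neighbours $v_p$ and $v_{m+1}$, contradicting part (2). Hence $\deg(v_p)\ge 3$ and $d\le p-4=\tau(G)-4$.

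Parts (5) and (6) are essentially a handshake computation. By part (2), every vertex is adjacent to at most one element of $T$; since each $t\in T$ has degree $2$ and at most one neighbour in $T$, each $t$ sends at least one edge into $V(G)\setminus T$, while each vertex of $V(G)\setminus T$ receives at most one such edge, giving $|T|\le |V(G)\setminus T|$, i.e.\ part (5). Combining $\delta(G)\ge 2$ with $\deg(v)\ge 3$ for $v\notin T$ yields
\[
2|E(G)|\ge 2|T|+3(n-|T|)=3n-|T|\ge 3n-\tfrac{n}{2}=\tfrac{5n}{2},
\]
so $|E(G)|\ge\lceil 5n/4\rceil$. The main obstacle throughout is part (2): the casework is fiddly because one has to identify, for each of several possible starting segments of the detour from $x$, a longer path or a contradicting rotation — the cleanest route is to pin down $P$ as $x,v,y,y',\ldots,z$ by elimination, then combine the forced presence of $x'\in V(P)$ with Lemma~\ref{obvious} Part~5 to break the detour.
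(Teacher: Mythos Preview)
Your arguments for parts (1), (3), (4), (5) and (6) are correct; in (3) and (4) you use a ``maximise the endpoint degree'' selection where the paper instead applies (2) directly to the rotated endpoint, but the content is the same.

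The gap is in part (2). You start a detour at the degree-$2$ vertex $x$ and assert that it ``must begin $x,v,y,y',\ldots$ by the forced position of $v$ and $y$'', but nothing forces this: a detour from $x$ may equally well start $x,x',\ldots$, or $x,v,w,\ldots$ for some other neighbour $w$ of $v$, and you do not carry out the promised elimination of these cases. Even granting a detour $P\colon x,v,y,y',\ldots,z$ with $x'$ somewhere on $P$, Lemma~\ref{obvious}(5) only forbids $z$ from being adjacent to two specified vertices ($x$ and the predecessor of $x'$ on $P$), which is nowhere near a contradiction. The sketch does not close.

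The paper's device is to start the detour $L$ at $v$ rather than at $x$ or $y$. Then $x,y\in V(L)$ automatically (if $x\notin V(L)$, the path $x,L$ has order $\tau(G)+1$). A degree-$2$ internal vertex of $L$ must have its only two neighbours as its path-predecessor and path-successor; since one of those neighbours is $v=L_1$, such a vertex is forced to be $L_2$. Both $x$ and $y$ cannot occupy the slot $L_2$, so one of them is the far endpoint $L_p$. That endpoint is adjacent to $v=L_1$, hence $V(L)$ carries a cycle of order $\tau(G)<n$; any vertex off this cycle then has detour order exceeding $\tau(G)$, the desired contradiction. Once (2) is secured this way, your proofs of (3)--(6) go through verbatim.
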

\begin{proof}
\begin{enumerate}
\item Clearly $|V(G)| >3$.
Suppose that $v \in V(G)$ is a cut-vertex.
Let $G_1$ and $G_2$ be components of $G - v$.
Let $L$ be a longest path starting at $v$. Since $v$
is a cut-vertex we may suppose that $V(L) \subset V(G_1) \cup \{v\}$.
Suppose that $w \in V(G_2)$. Then $w \notin V(L)$, and since
$G_2 \cup \{v\}$ is connected, it follows that
$\tau (w) > \tau (v)$, which contradicts the fact that
$G$ is a detour graph.
\item Suppose that $v \in V(G)$ has two neighbours, 
$u$ and $w$, each  with degree two. Let $L$ be a detour from $v$.
Because $\tau(u) = \tau(v) =\tau(w)$ it follows that
$u \in V(L)$ and $w \in V(L)$. Since $\deg (u) = \deg (w) = 2$
either $u$ or $w$ is an endvertex of $L$.
Suppose, without loss of generality,
that $u$ is an endvertex of $L$.
Then the vertices $V(L) \cup \{u\}$ form
a cycle $C$ with $|V(C)| = \tau (G)$. Since $\tau (G) < |V(G)|$
there exists a vertex $v_1 \in V(G) \setminus V(C)$. Since
$G$ is connected $\tau (v_1) > \tau (v)$, which contradicts
the fact that $G$ is a detour graph.  
\item Let $L_1: v_1,v_2,v_3,\ldots,v_{n-1},v_k$ 
be a detour in $G$.
Since $L_1$ is a longest path and, by part (1),
$\deg (v_1) \geq 2$,
the vertex $v_1$ is adjacent to a vertex of $L_1$, say 
$v_m$ where $3 \le m < k$. Then $v_m$ is adjacent to
$v_1$ and $v_{m-1}$. By part (2) at least one
of $v_1$ and $v_{m-1}$ has degree greater than two.
If $\deg(v_1) \geq 3$ then $L_1$ is a detour which has
an endvertex with
degree at least $3$.  If
$\deg (v_{m-1}) \geq 3$ then the path
$L_2 : v_{m-1},v_{m-2},v_{m-3},\ldots,v_1,v_m,v_{m+1},v_{m+2},\ldots,v_k$
is a detour with an endvertex of degree at least $3$.
Hence we may assume that $G$ has a 
detour, say $L_3$, with an endvertex
$w$ of degree at least $3$. We can now apply 
the same argument used above to show that, if the other
endvertex of $L_3$ does not have degree at least 3,
then  we can
construct another detour $Q$, which also has $w$ as an endvertex,
and such that the other endvertex of $Q$ 
has degree at least $3$.

\item Let $v_1 \in V(G)$ with $\deg (v_1) = \Delta (G)$.
Using the same argument given in part (3) we can choose
a detour  $Q : v_1,v_2,v_3,\ldots,v_{n-1},v_k$ 
such that
$v_k$ has degree at least $3$. All the neighbours of
$v_k$ (and $v_1$) lie on $Q$. Thus, apart from $v_{k-1}$,
the vertex $v_k$ is adjacent to at least two vertices
of $Q$. By Lemma~\ref{obvious} parts (5) and (4) 
it follows that 
that $v_1$ is adjacent to at most $\tau (G) -4$
vertices of $Q$. Hence $\Delta (G) \leq \tau(G)-4$.

\item By part (2) of this theorem, no vertex  in $T$
is adjacent to two vertices of degree two. Hence each
vertex in $T$ is adjacent to at least one 
vertex of $V(G) \setminus T$. Since no vertex of
$V(G) \setminus T$ is adjacent to two vertices
of $T$, it follows that
$|V(G) \setminus T| \geq |T|$. We depict the
situation in Figure~\ref{deg-two}.
\begin{figure}[H]
\begin{center}
\psfrag{a}[c][c]{$T$}
\psfrag{b}[c][c]{$V(G)\setminus T$}
\epsfig{file=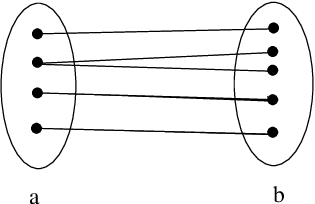}\\
\caption{\label{deg-two} Diagram for Theorem~\ref{prop-th}, Part 5}
\end{center}
\end{figure}
\item The graph $G$ will have the least size possible
when $|V(G) \setminus T| = |T|$, and $\deg (v) =3$
for every $v \in V(G) \setminus T$.
Then $n/2 = |V(G) \setminus T| = |T|$
and therefore
\begin{align}
|E(G)| \geq \frac{2(n/2)+3(n/2)}{2} = \frac{5n}{4}. \tag*{\qd}
\end{align}
\end{enumerate} \end{proof}
We will use Theorem~\ref{prop-th} to establish a lower bound
on $\tau(G)$,  where $G$ is a CND graph.
The following  lemma will be useful. 
\begin{lemma}
\label{deg2}
Let $G$ be a CND graph
with $\tau (G) = 8$. Let 
$P: v_1, v_2, \ldots,v_8$ be a detour
of $G$ whose endvertices each have degree
at least three. Let $H$ be a component of $G-V(P)$. Then
$H$ consists of a single vertex $v$ and $\deg (v) =2$.
\end{lemma}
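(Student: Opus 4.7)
The plan is to argue by contradiction: I will suppose either $|V(H)| \ge 2$ or that some vertex of $V(H)$ has degree at least three, and derive a contradiction. Let
\[
S = \{\,i\in\{1,\ldots,8\} : v_i \text{ has a neighbour in } V(H)\,\}.
\]
Since $G$ is $2$-connected by Theorem~\ref{prop-th}, part~(1), we have $|S|\ge 2$ (otherwise the unique attachment point would be a cut-vertex of $G$). Lemma~\ref{obvious}, part~(2), gives $1,8\notin S$, and Lemma~\ref{obvious}, part~(1), forces consecutive elements of $S$ to differ by at least two; hence $S\subseteq\{2,\ldots,7\}$ and $|S|\le 3$.

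I would then tighten things further using the degree hypothesis at the endvertices of $P$. Because $\deg(v_1)\ge 3$ and $v_1\not\sim v_8$ by Lemma~\ref{obvious}, part~(4), the vertex $v_1$ has two further neighbours $v_{a_1},v_{a_2}$ in $\{v_3,\ldots,v_7\}$; for each edge $v_1v_{a_i}$, the rotated detour
\[
v_{a_i-1},v_{a_i-2},\ldots,v_1,v_{a_i},v_{a_i+1},\ldots,v_8
\]
is itself a detour of $G$, so its initial vertex $v_{a_i-1}$ cannot have a neighbour in $V(H)$, giving $a_i-1\notin S$. Symmetrically, $v_8$ has two further neighbours $v_{b_1},v_{b_2}$ in $\{v_2,\ldots,v_6\}$ with $b_j+1\notin S$, and Lemma~\ref{obvious}, part~(5), together with its mirror, yields $b_j\ne a_i-1$ for all $i,j$.

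With these constraints in hand, I split on $|S|$. If $|S|=3$, then $S$ must be one of $\{2,4,6\}$, $\{2,4,7\}$, $\{2,5,7\}$, $\{3,5,7\}$; in each case the restrictions above pin $\{a_1,a_2\}$ to a unique pair and restrict $\{b_1,b_2\}$ to a short list, and inspecting each resulting configuration shows that either some $v_k$ with $k\in S$ would collect the five neighbours $v_{k-1},v_{k+1},v_1,v_8$ and a vertex of $V(H)$ (contradicting $\Delta(G)\le\tau(G)-4=4$ from Theorem~\ref{prop-th}, part~(4)), or else the forced edges admit an explicit hamiltonian path on $V(P)\cup V(H)$, contradicting non-traceability of $G$. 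If $|S|=2$, write $S=\{k,l\}$ with $l\ge k+2$. When $|V(H)|=1$, the single vertex $v$ has all its neighbours in $\{v_k,v_l\}$, so $\deg(v)\le 2$, and $\delta(G)\ge 2$ from Theorem~\ref{prop-th}, part~(1), then forces $\deg(v)=2$, which is exactly the desired conclusion. When $|V(H)|\ge 2$, the $2$-connectedness of $G$ supplies $w_k,w_l\in V(H)$ with $w_k\sim v_k$ and $w_l\sim v_l$, joined in $H$ by a path $L$ of order at least two, and the concatenation $v_1,\ldots,v_k,L,v_l,\ldots,v_8$ has $k+|V(L)|+9-l$ vertices; this already exceeds $8$ when $l-k=2$, and when $l-k\ge 3$ the accumulated constraints either leave a neighbour of $v_1$ or $v_8$ in $\{v_{k+1},\ldots,v_{l-1}\}$ (which, via a rotation, allows one to lengthen the concatenation to at least $9$ vertices), or else they force $\deg(v_k)$ or $\deg(v_l)$ to exceed $4$, in either case contradicting $\tau(G)=8$ or Theorem~\ref{prop-th}, part~(4).

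The main obstacle will be the $|S|=3$ step, where the four small configurations must each be checked for a degree overflow or an explicit hamiltonian path; the $|V(H)|\ge 2$ subcase with $l-k\ge 3$ also requires careful bookkeeping of the extra edges at $v_1$ and $v_8$ to produce the extending path.
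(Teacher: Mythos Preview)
Your overall strategy is sound and will work, but two points need correction and one comparison is worth making.

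First, a genuine slip: in the $|S|=3$ analysis you say a spanning path of $V(P)\cup V(H)$ ``contradicts non-traceability of $G$''. It does not---there may be further components of $G-V(P)$, so $V(P)\cup V(H)$ need not be all of $V(G)$. What such a path \emph{does} contradict is the hypothesis $\tau(G)=8$, since $|V(P)\cup V(H)|\ge 9$. Replace the appeal to non-traceability by ``this is a path on at least nine vertices, contradicting $\tau(G)=8$''. Relatedly, when $|S|=3$ and $|V(H)|\ge 2$ you cannot hope to exhibit a Hamiltonian path on $V(P)\cup V(H)$ without knowing the structure of $H$; but you do not need to. In every one of your four configurations either $2\in S$ or $7\in S$, so some $u\in V(H)$ is adjacent to $v_2$ (or $v_7$), and since $H$ is connected with $|V(H)|\ge 2$ the vertex $u$ has a neighbour $u'\in V(H)$; then $v_8,v_7,\ldots,v_2,u,u'$ already has nine vertices. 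This disposes of $|V(H)|\ge 2$ in the $|S|=3$ branch in one line, after which your degree-overflow\,/\,explicit-path casework is really a $|V(H)|=1$ argument, exactly parallel to the paper's treatment of $\deg(v)=3$.

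Second, a comparison. The paper organises the proof by first isolating $|V(H)|=1$ and only then bounding $\deg(v)$. For $|V(H)|>1$ it picks \emph{distinct} $w,v\in V(H)$ attached at $v_k,v_l$ (as you do), but then uses the ``walk one step into $H$'' trick---if $k=2$ then $v_8,\ldots,v_2,w,w'$ already has nine vertices---to force $k\ge 3$, $l\le 6$, $l\ge k+3$, hence $(k,l)=(3,6)$ in one stroke; a single degree-overflow at $v_3$ or $v_6$ then finishes. Your $|S|=2$, $|V(H)|\ge 2$, $l-k\ge 3$ branch reaches the same endpoint but via six $(k,l)$ pairs and rotation-extensions, which is correct but heavier. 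What your decomposition by $|S|$ buys is that $\deg(v)=4$ dies instantly (it would need $|S|\ge 4$), whereas the paper spends a short spacing argument on it; conversely, the paper's ``walk into $H$'' observation collapses almost all of your $|V(H)|\ge 2$ casework.
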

\begin{proof}
Suppose that $|V(H)| > 1$.
Since $G$ is $2$-connected there exist 
distinct vertices $v \in V(H)$
and $w \in V(H)$ such that $v$ is adjacent to 
$v_l \in V(P)$ and $w$ is adjacent to
$v_k \in V(P)$, where $l > k$.
We may suppose that $k \geq 3$, 
$l \leq 6$ and $l \geq k+3 \geq 6$, otherwise
we get a  path longer than $P$. 
Hence $l = 6$ and $k = 3$.
By Lemma~\ref{obvious}, and since
$\deg (v_1) \geq 3$, it follows that $v_1$
is adjacent to at least two (excluding $v_2$)
vertices
of $P$, which, again by Lemma~\ref{obvious},
must be $v_3$ and $v_6$.
Similarly $v_8$ must also be adjacent to
$v_3$ and $v_6$. But this is not possible,
since by Theorem~\ref{prop-th} we get
$\Delta (G) \leq \tau(G) -4 = 4$. Hence $|V(H)| = 1$.

Next we show that $\deg (v) = 2$.
Since $G$ is
$2$-connected and $\tau(G) =8$ we get
$2 \leq \deg (v) \leq 4$. 
Suppose that $\deg(v) =4$.
Let $v_i$ be the neighbour of $v$ on $P$
which is closest to $v_1$, and $v_j$ be the neighbour 
closest to $v_8$. Then by Lemma~\ref{obvious} part
(1) and (2) we have $i > 1$ and $i+6 \leq j < 8$.
Thus we have the contradiction $1 < i < 2$.
If $\deg (v) = 3$ there are only two cases to
consider:
\begin{enumerate}
\item  $v$ is adjacent to $v_2$, $v_4$ and $v_6$:\\
By Lemma~\ref{obvious} part (3)
vertex $v_1$ must be adjacent to $v_4$
and $v_6$. But for the same reason $v_8$ must be
adjacent to either $v_4$ or $v_6$, which 
contradicts $\Delta (G) \leq 4$.

\item  $v$ is adjacent to $v_2$, $v_5$ and $v_7$:\\
Again, by Lemma~\ref{obvious}(3), 
vertex $v_1$ must be adjacent to $v_5$ and $v_7$,
and $v_8$ must also be adjacent to 
$v_5$,  contradicting  $\Delta (G) \leq 4$. \hspace*{\fill}\qd
\end{enumerate}
\end{proof}

We are now able to give a lower
bound for $\tau (G)$.

\begin{theorem}
\label{tau}
Let $G$ be a CND graph.
Then $\tau (G) \geq 9$.
\end{theorem}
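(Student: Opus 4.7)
The plan is to proceed by contradiction: assume $\tau(G)\le 8$ and rule out each possible value using the structural properties already established for CND graphs.

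Small values dispose of themselves. By Theorem~\ref{prop-th}(4), $\Delta(G)\le\tau(G)-4$, while Theorem~\ref{prop-th}(1) gives $\delta(G)\ge 2$. If $\tau(G)\le 6$, then $\Delta(G)\le 2$, so $G$ is $2$-regular and connected, i.e.\ a cycle; but cycles are traceable, contradicting the definition of a CND graph. Hence $\tau(G)\ge 7$.

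For $\tau(G)=7$, we have $\Delta(G)\le 3$, so every vertex has degree $2$ or $3$. Fix a detour $P\colon v_1,\dots,v_7$ with $\deg(v_1)=\deg(v_7)=3$ using Theorem~\ref{prop-th}(3). Since $G$ is nontraceable, some component $H$ of $G-V(P)$ is non-empty. I would mimic the proof of Lemma~\ref{deg2}, using Lemma~\ref{obvious} together with the tighter bound $\Delta\le 3$, to force $H$ to be a single degree-$2$ vertex whose two neighbours on $P$ have indices $(k,l)$ with $2\le k$, $l\le 6$ and $l\ge k+2$. One then enumerates the six admissible pairs and checks in each case that Lemma~\ref{obvious}(3)--(5), combined with $\deg(v_1),\deg(v_7)\ge 3$, either violates $\Delta\le 3$ or produces, via the path rearrangements used in the proof of Lemma~\ref{obvious}, a path of order greater than $7$.

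For $\tau(G)=8$, fix a detour $P\colon v_1,\dots,v_8$ with $\deg(v_1),\deg(v_8)\ge 3$; by Lemma~\ref{deg2}, every component of $G-V(P)$ is a single degree-$2$ vertex $u$ whose two neighbours $v_k,v_l$ on $P$ satisfy $2\le k$, $l\le 7$ and $l\ge k+2$. Each of $v_1,v_8$ has at least two neighbours on $P$ beyond $v_2$ and $v_7$ respectively, yet Lemma~\ref{obvious}(3)--(5) forbids the edge $v_1v_8$, excludes $\{v_{k+1},v_{l-1},v_{l+1}\}$ from $N(v_1)$ and $\{v_{k-1},v_{k+1},v_{l-1}\}$ from $N(v_8)$, and couples their remaining choices via the implication $v_1v_j\in E\Rightarrow v_8v_{j-1}\notin E$. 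Combining these with $\Delta(G)\le 4$ and Theorem~\ref{prop-th}(2) (at most one degree-$2$ neighbour per vertex, so at most one neighbour of $v_1$ or $v_8$ lies in $G-V(P)$), I would enumerate the ten admissible pairs $(k,l)$ and show that each one collapses to exactly the configuration already eliminated in the proof of Lemma~\ref{deg2}, namely both $v_1$ and $v_8$ being forced adjacent to $v_k$ and $v_l$ in violation of $\Delta\le 4$.

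The main obstacle is the exhaustive case analysis for $\tau(G)\in\{7,8\}$. Individually the constraints of Lemma~\ref{obvious} are straightforward, but they must be combined over all admissible $(k,l)$ attachment patterns and played off against the very small degree bound $\Delta\le\tau(G)-4$; the key leverage is precisely that $\Delta$ is so small that the few still-legal neighbours of $v_1$ and $v_p$ are forced to collide on a single pair of vertices of $P$, which is already incompatible with the degree bound.
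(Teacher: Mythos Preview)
Your plan matches the paper's overall strategy: dispose of $\tau(G)\le 6$ via the degree bounds, then eliminate $\tau(G)=7$ and $\tau(G)=8$ by case analysis on a detour with both endvertices of degree at least three. Two points of divergence are worth noting.

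For $\tau(G)=7$, the paper does not analyse components of $G-V(P)$ at all. Instead it argues directly: with $\Delta(G)=3$, the vertex $v_1$ has (besides $v_2$) exactly two further neighbours $v_j,v_k$ on $P$; then Lemma~\ref{obvious}(4),(5) and the degree cap forbid $v_7$ from being adjacent to $v_1,v_{j-1},v_j,v_{k-1},v_k$, which leaves too few vertices on $P$ for $\deg(v_7)\ge 3$, so $v_7$ would need a neighbour off $P$---impossible since $P$ is a detour. This is shorter than mimicking Lemma~\ref{deg2}. (Incidentally, your parenthetical that ``at most one neighbour of $v_1$ or $v_8$ lies in $G-V(P)$'' is off: endvertices of a detour have \emph{no} neighbours outside the detour.)

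For $\tau(G)=8$, your expectation that every pair $(k,l)$ collapses to the single pattern ``$v_1$ and $v_8$ both forced adjacent to $v_k,v_l$, violating $\Delta\le 4$'' is too optimistic. In the paper's analysis only some cases end that way (e.g.\ $v$ adjacent to $v_3,v_6$). Several others terminate instead by exhibiting an explicit path of order~$9$ after placing the forced edges (e.g.\ the sub-cases under $v$ adjacent to $v_2,v_4$ or $v_2,v_7$), and still others require invoking Theorem~\ref{prop-th}(2) on an \emph{interior} vertex such as $v_3$ or $v_5$---forcing it to have degree at least~$3$ and then showing no legal third neighbour exists. So the enumeration is correct in spirit, but the contradictions are heterogeneous; you should not expect a single uniform mechanism to close every branch.
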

\begin{proof}
By Theorem~\ref{prop-th} $\delta (G) \geq 2$.
Hence $\Delta (G) \geq 3$, otherwise $G$ is
a hamiltonian graph.
By Theorem~\ref{prop-th} $\tau(G) \geq \Delta (G) + 4 \geq 7$.

If $\tau(G) =7$, then by 
Theorem~\ref{prop-th} we get $\Delta (G) = 3$.
Let $Q:\ w_1,w_2,w_3,\ldots, w_7$ be a detour in $G$.
By Theorem~\ref{prop-th}(3) we may suppose that
$\deg (w_1) = 3$ and $\deg(w_7) = 3$.
Let $w_1$ be adjacent to $w_j$ and $w_k$
where $j < k$.
Then, since $\Delta (G) = 3$, and by 
Lemma~\ref{obvious}(4) and (5), $w_7$ is not adjacent to 
$w_1$, $w_{j-1}$, $w_j$ and $w_k$, leaving
only two possible neighbours for $w_7$ on $Q$. Thus
$w_7$ has a neighbour not on $Q$, which
contradicts the fact that $Q$ is a detour.
Hence $\tau (G) \neq 7$.

Now suppose that  $\tau (G) = 8$.
Then, by Theorem~\ref{prop-th}, $\Delta(G) \leq 4$.
Let $P : v_1, v_2, v_3, \ldots, v_8$ be a detour
of $G$ with $\deg (v_1) \geq 3$ and
$\deg (v_8) \geq 3$. Let $v \in V(G) \setminus V(P)$.
By Lemma~\ref{deg2} $v$ is adjacent to exactly
two vertices of $P$.
We show that the following cases, which
by Lemma~\ref{obvious} exhausts all 
possible ways for $v$ to be connected
to $P$,  cannot occur:
\begin{enumerate}
\item $v$ adjacent to $v_2$ and $v_4$:\\
By Lemma~\ref{obvious} vertex  $v_1$ cannot be 
adjacent to $v_3$, $v_5$ or $v_8$,  so 
we need only consider the following subcases:
\begin{enumerate}
\item $v_1$ adjacent to $v_4$ and $v_6$:\\
By Lemma~\ref{obvious},
and since $\Delta(G) \leq 4$, vertex  $v_8$ must 
be adjacent to $v_6$ and $v_2$.
Since $v_2$ is adjacent to $v$,
and $\deg (v) =2$, 
it follows from 
Theorem~\ref{prop-th}(2) that $\deg (v_3) \geq 3$.
But $v_3$ cannot
be adjacent to a vertex on $P$, other
than $v_2$ and $v_4$, without
getting a path longer than $P$, or
a vertex with degree bigger than 4.
Therefore $v_3$ is adjacent
to some vertex $w \in  V(G) \setminus V(P)$,
and it follows from Lemma~\ref{deg2} 
that $w$ is also adjacent to some vertex 
of $P$ other than $v_3$. But this
again implies that we get a path longer
than $P$, or a vertex with degree bigger than 4.

\item $v_1$ adjacent to $v_4$ and $v_7$:\\
Then by Lemma~\ref{obvious} vertex 
$v_8$ must be adjacent to $v_5$
and $v_2$. We then get the path
$v_6,v_5,v_8,v_7,v_1,v_4,v,v_2,v_3$
which is longer than $P$.
\item $v_1$ adjacent to $v_6$ and $v_7$:\\
By Lemma~\ref{obvious}  vertex $v_8$ must be adjacent to $v_2$
and $v_4$. Since $v_4$ is adjacent to $v$ and $v_5$,
and $\deg(v) =2$,  by
Theorem ~\ref{prop-th} we get $\deg (v_5) \geq 3$.
But if $v_5$ is adjacent to $v_7$ we get the path
$v, v_4, v_8, v_7, v_5, v_6, v_1, v_2, v_3$
which is longer than $P$. Since $\deg(v_2) = 4$ vertex $v_5$ is not
adjacent to $v_2$. If $v_5$ is adjacent to $v_3$ we get
the path
$v_1, v_2, v, v_4, v_3, v_5, v_6, v_7, v_8$,
which is  longer than $P$. 
By Lemma~\ref{obvious} vertex $v_5$ is not
adjacent to $v_8$. Therefore $v_5$ is adjacent to some vertex 
$u \in V(G) \setminus V(P)$. By Lemma~\ref{obvious} this
is not possible, because $v_1$ is adjacent to $v_6$.
\end{enumerate}

\item  $v$ adjacent to $v_2$ and $v_5$:\\
By Lemma~\ref{obvious} vertex  $v_1$ is adjacent to $v_5$ and $v_7$. 
Apart from $v_7$, the vertex $v_8$ can only
be adjacent to $v_2$, contradicting $\deg (v_8) \geq 3$.

\item $v$ adjacent to $v_2$ and $v_6$:\\
By Lemma~\ref{obvious}, vertex $v_1$ can only be 
adjacent to $v_4$
and $v_6$, and $v_8$ can only be adjacent to
$v_4$ and $v_2$ (since $\Delta (G) \leq 4$). 
Because $v_6$ is adjacent to $v$ and $v_5$,
and $\deg(v) = 2$, by Theorem~\ref{prop-th}(2) 
it follows that $\deg(v_5) \geq 3$. But $v_5$
cannot be adjacent to $v_7$, for then
we get the path
$v_1, v_4, v_3, v_2,v, v_6, v_5,v_7, v_8$ 
which is
longer than $P$.  Also $v_5$ is not adjacent
to $v_3$, because then we have the path
$v_8, v_7, v_6, v, v_2, v_1,v_4,v_5, v_3$ 
which is longer than $P$.
But if $u \in V(G) \setminus V(P)$ and $u$ is adjacent 
to $v_5$ we again get a path
$u, v_5, v_4, v_3, v_2, v, v_6,v_7, v_8$ 
which is longer than $P$.

\item $v$ adjacent to $v_2$ and $v_7$:\\
By Lemma~\ref{obvious}, vertex $v_1$ is not
adjacent to $v_3$, $v_6$ and $v_8$.
Hence we have three subcases:
\begin{enumerate}
\item $v_1$ adjacent to $v_4$ and $v_5$:\\
Then $v_8$ must be adjacent to $v_5$
and $v_2$, and in that case
$v_3,v_4,v_1,v_2,v,v_7,v_8,$ \newline
$v_5,v_6$
is a path longer than $P$.
\item $v_1$ adjacent to $v_4$ and $v_7$:
\begin{enumerate}
\item If $v_8$ is adjacent to $v_4$ and $v_5$,
or if $v_8$ is adjacent to $v_2$ and $v_5$,
we get a path
$v_3,v_4,v_1,v_2,v,v_7,v_6,v_5,v_8$
which is longer than $P$.
\item If $v_8$ is adjacent to $v_4$ and $v_2$:\\
By Theorem~\ref{prop-th} we get $\deg (v_6) \geq 3$
and $\deg (v_3) \geq 3$. The only possibility
is that $v_6$ is adjacent to $v_3$. But
then we get the path
$v_8,v_7,v,v_2,v_3,v_6,v_5,v_4,v_1$
which is longer than $P$.
\end{enumerate}
\item $v_1$ adjacent to $v_5$ and $v_7$:\\
By Lemma~\ref{obvious} vertex $v_8$
can only be adjacent to $v_2$ and $v_5$
(apart from $v_7$). By Theorem~\ref{prop-th}
we get $\deg(v_6) \ge 3$ and $\deg (v_3) \ge 3$.
The only possibility is that $v_6$ is adjacent to $v_3$.
But then we get the path
$v_6, v_3, v_4, v_5,v_1, v_2, v,$\newline
$v_7, v_8$ 
which is longer than $P$.
\end{enumerate}

\item $v$ adjacent to $v_3$ and $v_5$:\\
By Lemma~\ref{obvious} vertex $v_1$ is not adjacent
to $v_4$, $v_6$ or $v_8$. Thus we have 
three subcases to consider:
\begin{enumerate}
\item $v_1$ adjacent to $v_3$ and $v_5$:\\
It follows from Lemma~\ref{obvious} and $\Delta (G) \le 4$
that, apart from $v_7$, the vertex $v_8$ can only
be adjacent to $v_6$, which
contradicts $\deg (v_8) \geq 3$.
\item $v_1$ adjacent to $v_5$ and $v_7$:\\
It again follows from Lemma~\ref{obvious} and $\Delta(G) \le 4$
that, apart from $v_7$, the vertex $v_8$ can only
be adjacent to $v_3$, 
contradicting $\deg (v_8) \geq 3$.
\item $v_1$ adjacent to $v_3$ and $v_7$:\\
By Lemma~\ref{obvious} and $\Delta(G) \le 4$
vertex $v_8$ can only be adjacent to
$v_5$ and $v_7$, contradicting $\deg(v_8) \ge 3$.

\end{enumerate}

\item $v$ adjacent to $v_3$ and $v_6$:\\
By Lemma~\ref{obvious} the vertex $v_1$ must be adjacent
to $v_3$ and $v_6$. 
Then, apart from $v_7$, the vertex $v_8$
cannot be adjacent to any 
other vertices of $P$, again contradicting
$\deg (v_8) \geq 3$.
\end{enumerate}
By symmetry it follows that:\\
1.  The case $v$ adjacent to $v_3$ and $v_7$
is equivalent to the case $v$ adjacent to  $v_2$ and $v_6$.\\
2. The case $v$ adjacent to $v_4$ and $v_6$
is equivalent to the case $v$ adjacent to $v_3$ and $v_5$.\\
3.  The case $v$ adjacent to $v_4$ and $v_7$
is equivalent to the case $v$ adjacent to $v_2$ and $v_5$.\\
4.  The case $v$ adjacent to $v_5$ and $v_7$
is equivalent to the case $v$ adjacent to $v_2$ and $v_4$.

Thus all cases have been considered. \hspace*{\fill}\qd
\end{proof}

Theorem~\ref{tau} implies immediately that there are
no CND graphs with order less than 10,
and in the next section we show that CND
graphs of all orders greater than 17 exist. Whether
or not a CND graph of order $n$ exists,
where $10 \leq n \leq 17$, remains
an open problem.

\section{Constructions of CND graphs}
\label{construct}

In this section we describe four types of  constructions
for CND graphs.  The 
constructions in this section all follow the same 
pattern: We specify a certain type of multigraph $L$
(ie multiple edges and loops allowed in $L$),
inflate the vertices of $L$ with graphs
of a certain type, and also, in some cases, insert
graphs of another type into the edges of $L$.
Note that, although $L$ is allowed to be a multigraph,
our constructions are such that the CND graphs
constructed will all be simple. 
We will denote the length of a longest trail in 
a multigraph $L$ by $t(L)$,
and $E(L)$ denotes the set of edges of $L$.
We say a trail $L$
\textit{spans} $G$, or is a 
\textit{spanning trail} of $G$,
if every vertex of $G$ belongs to $L$.

\begin{definition}
An \textbf{admissible} multigraph $L$
\index{admissible multigraph}
is a multigraph with the following
properties:
\begin{enumerate}
\item[A-1] $t(L) < |E(L)|$.
\item[A-2] For each vertex $v \in V(L)$, and edge
$e \in E(L)$ which is incident with $v$,
there is a trail of length $t(L)$ 
beginning $v, e, \ldots$ which spans $L$.
\end{enumerate}
\end{definition}
\begin{lemma}
\label{trail}
Let $L$ be an admissible multigraph. 
Then every vertex of $L$
has odd degree greater than one.
\end{lemma}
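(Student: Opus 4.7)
The plan is to fix an arbitrary vertex $u \in V(L)$ and an incident edge $e$, and exploit the spanning trail $T$ of length $t(L)$ starting $u, e, \ldots$ that condition A-2 provides. The first step is to show $T$ cannot be a closed trail (ending back at $u$). If it were closed, then because $T$ spans $L$, every vertex $x \in V(L)$ appears on $T$, so $T$ can be rotated to start and end at $x$. By A-1 there is some edge $f \in E(L) \setminus E(T)$, and $f$ is incident with some vertex $x$ visited by $T$; rotating $T$ to start at $x$ and appending $f$ produces a trail of length $t(L)+1$, contradicting the definition of $t(L)$. Hence $T$ is an open trail from $u$ to some distinct vertex $w$.

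The second step is to conclude that both endpoints of $T$ have odd degree in $L$. In any open trail the number of trail-edges incident with either endpoint is odd, so $d_T(u)$ and $d_T(w)$ are odd. If any edge of $L$ at $w$ lay outside $T$, appending it would extend $T$ beyond length $t(L)$; hence every edge at $w$ is used by $T$, giving $d(w) = d_T(w)$ odd. Reversing $T$ yields a spanning trail of length $t(L)$ from $w$ to $u$, and the same argument at $u$ forces $d(u) = d_T(u)$ odd. Since $u$ was arbitrary, every vertex of $L$ has odd degree.

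Finally I must rule out $d(u) = 1$. Suppose $d(u) = 1$, let $e$ be the unique edge at $u$ and let $z$ be its other endpoint. Applying A-2 to the pair $(z, e)$ gives a spanning trail of length $t(L)$ starting $z, e, u, \ldots$; but at $u$ no edges remain unused, so this trail has length $1$ and terminates at $u$. For it to span $L$ we need $V(L) = \{u, z\}$ (the case $V(L) = \{u\}$ is incompatible with $d(u) = 1$), and $t(L) = 1$. With $d(u) = 1$, the other edges of $L$ can only be loops at $z$; but any such loop $\ell$ yields the trail $u, e, z, \ell, z$ of length $2$, contradicting $t(L) = 1$. So $e$ is the only edge of $L$, whence $|E(L)| = 1 = t(L)$ contradicts A-1. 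Therefore $d(u) \geq 2$, and combined with oddness, $d(u) \geq 3$. The main obstacle is the case analysis in this final step, where loops and the very small graphs $|V(L)| \leq 2$ must be carefully dispatched in order to invoke A-1.
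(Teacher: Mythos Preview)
Your proof is correct and follows essentially the same approach as the paper's: both use a longest trail starting at the given vertex, the parity of trail-degrees at endpoints, and the observation that a closed longest trail could be extended along any unused edge (since the trail is spanning). The paper argues the contrapositive (even degree forces the trail to be closed, then extends it), while you argue directly (the trail cannot be closed, hence is open, hence the endpoint degree is odd); these are logically equivalent reformulations. The one substantive difference is that the paper dismisses $\deg(v) > 1$ as ``obvious'', whereas you supply a careful case analysis to rule out degree one---your treatment here is more thorough, but the underlying mathematics is the same.
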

\begin{proof}
First note that it is obvious that 
$\deg (v) > 1$ for all $v \in V(L)$.
Now suppose that a vertex $v$ of $L$ has even degree.
Let $T$ be a longest trail in $L$ starting at $v$.
Then every edge incident with $v$ 
must be an  edge of $T$ (otherwise we get a longer
trail), and hence the vertex $v$ must 
also be the last vertex of $T$.
Hence $T$ is a closed trail, and since at least one edge $e$
of $L$ does not belong to $T$, we get a trail
in $L$ of length $t(L) +1$. \hspace*{\fill}\qd
\end{proof}

It follows easily from Lemma~\ref{trail} that the 
two smallest, admissible, multigraphs 
are $K_4$ and the multigraph $C_2 \times K_2$
shown in Figure~\ref{admissable}.  
\begin{figure}[H]
\begin{center}
\epsfig{file=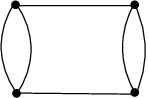}\\
\caption{\label{admissable} The graph $C_2 \times K_2$}
\end{center}
\end{figure}
In fact we have the following lemma:
\begin{lemma}\label{cubic-admissible}
The graphs  $C_n \times K_2$,
where $n \geq 3$, are all cubic, 
admissible graphs.
\end{lemma}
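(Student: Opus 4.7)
The plan is to verify the three requirements — cubic, condition A-1, and condition A-2 — for $L_n := C_n \times K_2$ separately. Cubicness is immediate from the definition of the cartesian product: a vertex $(v,i)$ of $L_n$ has the two $C_n$-neighbours of $v$ in its own layer plus the single vertex $(v,1-i)$ across, for a total degree of three. Thus $|V(L_n)| = 2n$ and $|E(L_n)| = 3n$.

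For A-1, I plan to apply a standard degree-parity argument to an arbitrary trail $T$ of length $\ell$ in $L_n$. The subgraph $G_T$ formed by the edges of $T$ has each vertex-degree bounded by $3$ (since $L_n$ is cubic), and at most two vertices of odd degree (the endpoints, if $T$ is open). Non-endpoint vertices contribute at most $2$ to the degree sum, and endpoints contribute at most $3$. Summing gives $2\ell \le 3 + 3 + 2(2n-2) = 4n + 2$, so $\ell \le 2n + 1 < 3n = |E(L_n)|$ for all $n \ge 3$, establishing A-1.

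For A-2, the plan is to exhibit two explicit spanning trails of length $2n+1$, which incidentally shows the bound above is tight and $t(L_n) = 2n+1$. Label vertices $a_0, \ldots, a_{n-1}$ and $b_0, \ldots, b_{n-1}$ in the two copies of $C_n$, with subscripts read mod $n$. The trail
\[
T_1:\ a_0, a_1, \ldots, a_{n-1}, a_0, b_0, b_1, \ldots, b_{n-1}, a_{n-1}
\]
uses all $n$ edges of the $a$-cycle, the spoke $a_0 b_0$, the $n-1$ consecutive forward $b$-cycle edges, and the spoke $b_{n-1} a_{n-1}$, so it has length $2n+1$ and spans $L_n$. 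The trail
\[
T_2:\ a_0, b_0, b_1, \ldots, b_{n-1}, a_{n-1}, a_{n-2}, \ldots, a_1, a_0, a_{n-1}
\]
uses the spoke $a_0 b_0$, the $n-1$ forward $b$-edges, the spoke $b_{n-1} a_{n-1}$, the $n-1$ backward $a$-edges, and the closing edge $a_0 a_{n-1}$, again with length $2n+1$ and spanning. The automorphism group of $L_n$ is transitive on vertices (via rotations $(a_i,b_i) \mapsto (a_{i+k}, b_{i+k})$ composed with the layer-swap $a_i \leftrightarrow b_i$), and the reflection $a_i \mapsto a_{-i},\ b_i \mapsto b_{-i}$ fixes $a_0$ while exchanging its two cycle-edges $a_0a_1$ and $a_0a_{n-1}$. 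Hence, up to automorphism, there are only two cases to check at any starting vertex $v$: start along a cycle edge (covered by $T_1$) or start along a spoke (covered by $T_2$). This establishes A-2.

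The main obstacle is largely bookkeeping: the degree-counting bound for A-1 is routine once one observes that the trail subgraph $G_T$ inherits the cubic cap on vertex-degrees, and the concrete trails $T_1, T_2$ are easily checked. The only genuine content is recognising the automorphism-symmetry reduction, which prevents the A-2 case analysis from ballooning to all $2n$ vertices and their $3$ incident edges.
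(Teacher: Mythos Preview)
Your proof is correct and follows essentially the same approach as the paper: establish cubicness and the edge/vertex counts, bound $t(L_n)\le 2n+1$ by a degree-parity argument (the paper simply asserts this bound), reduce the A-2 verification by symmetry to a single vertex and two incident-edge types, and exhibit explicit spanning trails of length $2n+1$ for each. The only cosmetic differences are that you spell out the parity bound and the automorphism-group reduction more carefully, and your particular trails $T_1,T_2$ differ slightly from the paper's (the paper's cycle-edge trail traverses both $n$-cycles in full using a single spoke, whereas your $T_1$ uses one full cycle, $n-1$ edges of the other, and two spokes); both choices work equally well.
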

\begin{proof}
Let $C_1 : v_1,v_2,v_3,\ldots,v_n,v_1$ and
$C_2 : w_1,w_2,w_3,\ldots,w_n,w_1$ be two
cycles in $C_n \times K_2$, shown in 
Figure~\ref{m-admissable}.
\begin{figure}[H]
\begin{center}
\psfrag{a}[c][c]{$v_1$}
\psfrag{b}[c][c]{$v_2$}
\psfrag{c}[c][c]{$v_3$}
\psfrag{d}[c][c]{$v_4$}
\psfrag{e}[c][c]{$v_{n-2}$}
\psfrag{f}[c][c]{$v_{n-1}$}
\psfrag{g}[c][c]{$v_{n}$}
\psfrag{h}[c][c]{$w_1$}
\psfrag{i}[c][c]{$w_2$}
\psfrag{j}[c][c]{$w_3$}
\psfrag{k}[c][c]{$w_4$}
\psfrag{l}[c][c]{$w_{n-2}$}
\psfrag{m}[c][c]{$w_{n-1}$}
\psfrag{n}[c][c]{$w_n$}
\psfrag{o}[c][c]{$e_2$}
\psfrag{p}[c][c]{$e_1$}
\epsfig{file=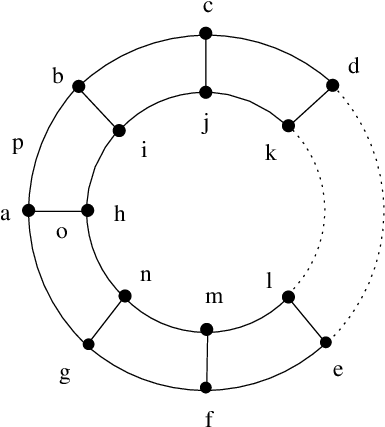}
\caption{\label{m-admissable} The graph $C_n \times K_2$}
\end{center}
\end{figure}
\begin{enumerate}
\item We have $|E(C_n \times K_2)| = 3n$
and $|V(C_n \times K_2)| = 2n$.
Since $C_n \times K_2$ is a cubic graph, we get
$t(L) \leq 2n +1$. Hence, for $n \geq 3$,
$t(L) < |E(C_n \times K_2)|$.
\item By symmetry it is sufficient to consider
the two trails beginning $v_1,e_1, \ldots$ and 
$v_1,e_2,\ldots$ (See Figure~\ref{m-admissable}). The trail
$L_1 : v_1, e_1,v_2,v_3,\ldots,v_{n},v_1, w_1, w_2,\ldots,w_n,w_1$
spans $C_n \times K_2$ and $|E(L)| = 2n+1 = t(L)$.

The trail 
$L_2 : v_1, e_2,w_1,w_2,w_3,\ldots,w_n,v_n,v_1,v_2,\ldots,v_{n-1},v_n$
also spans $C_n \times K_2$ and 
$|E(L_2)| = 2n -2 +3 = 2n+1$. \hspace*{\fill}\qd 
\end{enumerate}\end{proof}
\subsection{First construction}
We need the following type of graph:
\begin{definition}
\label{method1}
A simple graph $G$ is said to be
an \textit{I-type} graph if 
\index{I-type graph}
it has a distinguished set $D =\{a, b, c\}$
of three vertices such that
\begin{enumerate}
\item[I-1] $G$ has  no spanning path
with both endvertices in $D$.
\item[I-2] For any vertex $v \in G$ there is a path
$P$ with endvertex $v$ and an endvertex in $D$,
and the remaining pair of vertices 
in $D$ are joined by
a path $Q$, disjoint from $P$, such that 
$P$ and $Q$ together span $G$.
(The cases where $P = a$ or $P = b$
or $P = c$ are included.)
\end{enumerate}
\end{definition}

An obvious example of an $I$-type graph 
is the net (ie. $K_3$ with a pendant
leaf attached to each vertex, shown in Figure~\ref{net}),
\index{net}
where we take the distinguished vertices
to be the vertices of degree one.
\begin{figure}[H]
\begin{center}
\epsfig{file=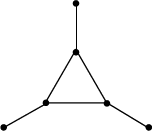}\\
\caption{\label{net} The net}
\end{center}
\end{figure}
We give another example in Lemma~\ref{pete}. 
\begin{figure}[H]
\begin{center}
\psfrag{x}[c][c]{a}
\psfrag{y}[c][c]{b}
\psfrag{z}[c][c]{c}
\psfrag{d}[c][c]{$v_1$}
\psfrag{e}[c][c]{$v_2$}
\psfrag{f}[c][c]{$v_3$}
\psfrag{g}[c][c]{$v_4$}
\psfrag{h}[c][c]{$v_5$}
\psfrag{i}[c][c]{$v_6$}
\psfrag{j}[c][c]{Petersen Graph}
\psfrag{k}[c][c]{$PG^\odot$}
\psfrag{l}[c][c]{$v$}
\epsfig{file=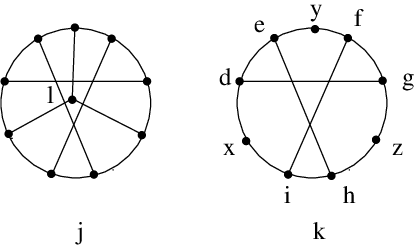}\\
\caption{\label{vd-petersen} Petersen and $PG^\odot$ Graphs}
\end{center}
\end{figure}
Let $PG^\odot$ denote the graph\index{vertex-deleted Petersen graph} 
obtained by deleting a vertex from
the Petersen graph \index{Petersen graph}
(see Figure~\ref{vd-petersen}).
Let $\{a, b, c\} \subset V(PG^\odot)$
be the vertices in $PG^\odot$ of degree
two which were formerly neighbours 
of the deleted vertex. 
\begin{lemma}
\label{pete}
The graph $PG^\odot$ is an $I$-type graph
with  distinguished 
vertex set $D = \{a, b, c\}$.
\end{lemma}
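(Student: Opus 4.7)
The plan is to work with a concrete labelling of the Petersen graph $PG$, taking the outer $5$-cycle to be $0,1,2,3,4$, the inner pentagram to be $5,7,9,6,8,5$, and the spokes $i \sim i+5$ for $i=0,1,2,3,4$. Deleting vertex $0$ leaves a graph in which the three degree-$2$ vertices are precisely $1,4,5$, so I may identify $\{a,b,c\}=\{1,4,5\}$. The two axioms I-1 and I-2 of Definition~\ref{method1} will be verified separately.

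For I-1 my approach is a one-line contradiction. If $H$ were a spanning path of $PG^\odot$ with both endvertices in $D$, then reattaching $0$, which is adjacent in $PG$ to every element of $D$, would close $H$ into a Hamiltonian cycle of $PG$, contradicting the well-known fact that the Petersen graph is non-Hamiltonian.

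For I-2 my approach is to reduce to two representative cases by symmetry and then exhibit the required paths explicitly. Because $\mathrm{Aut}(PG)$ is arc-transitive, its stabiliser of $0$ acts transitively on $D$; because $PG$ is distance-transitive, this stabiliser also acts transitively on the six vertices of $V(PG^\odot)\setminus D$. Hence it suffices to handle one representative vertex from $D$, for which I will take $v=1$, and one vertex outside $D$, for which I will take $v=6$. When $v=1$ I will use $P$ consisting of the single vertex $1$, together with the path $Q:\ 4,3,2,7,9,6,8,5$, which is a spanning path of $PG^\odot-1$ joining $b=4$ to $c=5$. When $v=6$ I will use $P:\ 6,8,3,2,1$ from $v$ to $a=1$, together with the disjoint path $Q:\ 4,9,7,5$ from $b=4$ to $c=5$; their union spans $PG^\odot$. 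Checking these assignments reduces to verifying a handful of edges and a disjointness condition.

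The main obstacle I anticipate is making the symmetry reduction fully rigorous: one must either cite the arc- and distance-transitivity of $PG$ or exhibit concrete automorphisms fixing $0$ that realise the required permutations of $\{a,b,c\}$ and of the six far vertices. A safe fallback, should the symmetry step be deemed too terse for the thesis, is simply to list an explicit $(P,Q)$ decomposition for each of the nine vertices of $PG^\odot$ in turn, which is purely mechanical but tedious.
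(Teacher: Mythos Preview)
Your proposal is correct and follows essentially the same route as the paper's own proof: both arguments dispatch I-1 by observing that a spanning $D$--$D$ path in $PG^\odot$ would close up to a hamiltonian cycle in the Petersen graph, and both handle I-2 by reducing via symmetry to one representative in $D$ and one representative outside $D$, then exhibiting explicit $(P,Q)$ decompositions. Your justification of the symmetry step (arc- and distance-transitivity of $PG$) is a bit more explicit than the paper's bare ``by symmetry'', which is if anything an improvement.
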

\begin{proof}
\begin{enumerate}
\item[I-1] Suppose that $a, S, b$ is a 
spanning path in $PG^\odot$, and the vertex
deleted from the Petersen graph to get $PG^\odot$
was $v$. Then $v,a,S,b$ is a hamiltonian cycle 
in the Petersen graph, which contradicts the
fact that the Petersen graph is not hamiltonian.  
\item[I-2] By symmetry we need consider only the
distinguished vertex $a$ and the neighbour 
$v_1$ of $a$. In the first case we let
$P = a$
and let  $Q$ be the path
$b$, $v_3$, $v_6$, $v_5$, $v_2$, $v_1$, $v_4$, $c$.
In the second case we let $P_1$ be
the path $v_1, a$ and we let $Q_1$ be the path
$b$, $v_2$, $v_5$, $v_6$, $v_3$, $v_4$, $c$. \hspace*{\fill}\qd
\end{enumerate}
\end{proof}
Further interesting
properties of $PG^\odot$ can 
be found in \cite{sk84}.  

Let $L$ be a cubic, admissible multigraph 
of order $k$.
Let $F$ be a graph obtained by 
inflating each vertex $v_i \in V(L)$
with an $I$-type graph $G_i$, $(i=1,2,\ldots, k)$.
That is, we delete 
the vertex $v_i$,
adding a copy of graph $G_i$ 
in its place, and joining
the three former neighbours of the 
deleted vertex to the three 
distinguished vertices of the copy of $G_i$
by using the three edges
which were incident with $v_i$. 
This process is carried out 
sequentially on all the vertices of $L$.
We will say that $F$ is
obtained by inflating $L$
with $I$-type graphs.
For example, the inflation of the \index{inflation of a graph}
admissible multigraph shown in Figure~\ref{admissable}
with I-type graphs $G_1$, $G_2$,
$G_3$ and $G_4$ is illustrated in
Figure~\ref{inflation}.
\begin{figure}[h]
\begin{center}
\psfrag{a}[c][c]{$a_1$}
\psfrag{b}[c][c]{$b_1$}
\psfrag{c}[c][c]{$c_1$}
\psfrag{d}[c][c]{$G_1$}
\psfrag{e}[c][c]{$a_2$}
\psfrag{f}[c][c]{$b_2$}
\psfrag{g}[c][c]{$G_2$}
\psfrag{h}[c][c]{$c_2$}
\psfrag{l}[c][c]{$G_3$}
\psfrag{i}[c][c]{$b_3$}
\psfrag{j}[c][c]{$a_3$}
\psfrag{k}[c][c]{$c_3$}
\psfrag{p}[c][c]{$G_4$}
\psfrag{m}[c][c]{$a_4$}
\psfrag{n}[c][c]{$c_4$}
\psfrag{o}[c][c]{$b_4$}
\epsfig{file=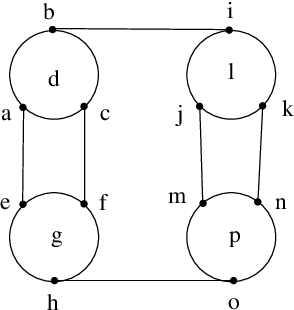}\\
\caption{\label{inflation} Inflation of a multigraph with 
$I$-type graphs.}
\end{center}
\end{figure}

The construction of CND graphs using 
$I$-type graphs is explained 
in  Theorem~\ref{method1-th}.
\begin{theorem}
\label{method1-th}
Let $L$ be a cubic, admissible multigraph
of order $k$. Let $F$ be obtained by 
inflating $L$ with
$I$-type graphs $G_i$, $i=1,2,3,\ldots,k$.
Then $F$ 
is a CND graph with detour order
\[
2-k+\sum_{i=1}^k|V(G_i)|.
\]
\end{theorem}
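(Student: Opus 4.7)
The plan is to analyse any path in $F$ via its projection to a trail in $L$. Contracting each inflated subgraph $G_i$ back to its original vertex $v_i$ sends a path $P$ of $F$ to a trail $T$ of $L$ (no edge of $L$ is repeated because $P$ has no repeated edges), and conversely any trail of $L$ lifts to a path in $F$ by threading sub-paths through the visited $G_i$ via their distinguished vertices. In particular $|V(P)|=\sum_i|V(P)\cap V(G_i)|$, and each summand can be bounded using the trail-degree $d_T(v_l)\in\{0,1,2,3\}$ of $v_l$ in $T$.

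For the upper bound $\tau(F)\le 2-k+\sum_i|V(G_i)|$, observe that when $d_T(v_l)=2$ (an internal visit) the contribution of $P$ inside $G_l$ is a single sub-path joining two distinguished vertices, which by property I-1 covers at most $|V(G_l)|-1$ vertices; when $d_T(v_l)=3$ the contribution is (at most) two disjoint sub-paths of $G_l$ totalling at most $|V(G_l)|$ vertices; when $d_T(v_l)\le 1$ the trivial bounds apply. A parity count from $2|E(T)|=\sum_l d_T(v_l)$, together with $d_T\le 3$ and $d_T$ even at trail-internal vertices, shows that $|E(T)|\le k+1$ with equality precisely when both trail-endpoints have $d_T=3$; combined with the per-$v_l$ vertex bounds, and a case analysis handling the sub-case where both endpoints of $P$ lie in the same $G_i$ (forcing a closed trail and an extra deficit), one obtains $|V(P)|\le 2-k+\sum_i|V(G_i)|$ in every configuration.

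For the matching lower bound, and to show the detour sequence is constant, fix any $w\in V(F)$ with $w\in V(G_s)$. Apply I-2 at $w$ to obtain a path $P^*$ from $w$ to some distinguished vertex $d^*$ of $G_s$ together with a disjoint path $Q^*$ between the other two distinguished vertices, spanning $V(G_s)$. Let $e$ be the edge of $L$ at $v_s$ corresponding to $d^*$. By property A-2 there is a spanning trail $T$ of $L$ of length $t(L)$ beginning $v_s,e,\ldots$; admissibility forces $t(L)=k+1$ for cubic $L$ (via the parity argument above), so $T$'s other endpoint $v_t$ also has $d_T(v_t)=3$. I then splice: start at $w$ and follow $P^*$ to $d^*$; traverse the edge $e$ of $T$; at each internal visit $v_l$ of $T$ apply I-2 at $v_l$ with $v$ chosen as the distinguished vertex $u_l$ of $G_l$ that $T$ does not touch (this collapses the I-2 $P^*$ to $\{u_l\}$ and yields a $Q^*$ that is a Hamiltonian sub-path of $G_l-u_l$ between the two distinguished vertices used by $T$); upon returning to $G_s$ via the second edge of $T$ at $v_s$, insert $Q^*$ from one remaining distinguished vertex to the other and exit along the third edge; finally at $v_t$ apply I-2 with $v$ set to the distinguished vertex of $G_t$ where the path is to terminate. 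The resulting path starts at $w$ and has exactly $|V(G_s)|+|V(G_t)|+\sum_{l\ne s,t}(|V(G_l)|-1)=2-k+\sum_i|V(G_i)|$ vertices, so $\tau_F(w)\ge 2-k+\sum_i|V(G_i)|$. Together with the upper bound this proves $F$ is a detour graph with $\tau(F)=2-k+\sum_i|V(G_i)|$; since $k\ge 4$ for any cubic admissible multigraph, $\tau(F)<|V(F)|$ and $F$ is nontraceable, and connectedness of $F$ is immediate from that of $L$ and of each $G_i$. The main obstacle is coordinating the forced endvertex $d^*$ returned by each application of I-2 with the freedom in A-2 to pick the starting edge of $T$; this is circumvented by always invoking I-2 at a $v_l$ with $v$ chosen to be a distinguished vertex, so that $P^*$ collapses to a single vertex and $Q^*$ does all the work.
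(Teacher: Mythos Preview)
Your proof is correct and follows essentially the same strategy as the paper's: both bound $\tau(F)$ from above via property I-1 and then, for an arbitrary vertex $w$, use I-2 together with A-2 to thread a spanning trail of $L$ into a path of the target length. Your upper-bound argument is considerably more careful than the paper's---you project to a trail and do a degree/parity count covering all cases (including closed-trail projections), whereas the paper only explicitly treats paths that meet every $G_i$---but the underlying idea and the lower-bound construction are the same.
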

\begin{proof}
By Condition I-1, if a path $W$ in $F$ contains 
vertices from each copy
of $G_i$, then 
at least $k-2$ vertices of $F$ are not in $W$,
since at least one vertex must be omitted
from every copy of $G_i$ which 
does not contain an endvertex
of $W$. Hence
\[
\tau(F) \leq 2-k+\sum_{i=1}^k|V(G_i)|
\]
and $F$ is
nontraceable.  
Next we show that each vertex 
$v$ of $F$ is an endvertex
of a path of order 
\[
2-k+\sum_{i=1}^k|V(G_i)|.
\]
Let $v \in V(F)$, and let $G_0$ 
be the $I$-type subgraph
of $F$ containing $v$. 
Then there is a path $P$ from
$v$ to one of the three distinguished
vertices of $G_0$  (call it $a_0$) 
such that the two remaining distinguished
vertices are joined by a path $Q$,
disjoint from $P$, where $P$ and $Q$
together span $G_0$. Let $e_1$
be the edge incident with $a_0$ whose
other endvertex is not in $G_0$.
Let $v_0$ be the vertex in $L$ corresponding
to $G_0$. Then, by A-2,  $L$ has a longest trail
$T$ starting $v_0, e_1, \ldots,$
which spans $L$. Now each of
the three edges of $L$ incident with
$v_0$ lies in $T$, otherwise we can
add a missing edge to $T$ to obtain a trail
in $L$ that is longer than $T$.
Thus $v_0$ occurs exactly twice
in $T$. Let $v_l$ be the last
vertex of $T$. Then $v_l$ also occurs
exactly twice in $T$, while
every vertex in $V(L) \setminus \{v_0, v_l\}$
occurs exactly once in $T$. We can now
construct a path $P$ in $F$ starting
at $v$ that exits from $G_0$ at $a_0$
and then moves through the 
subgraphs $G_i$ in accordance with the
trail $T$. Such a path will visit
each of $G_0$ and $G_l$ 
(the $I$-type graph corresponding 
to $v_l$) twice and all the
other $G_i$ only once each.
Thus we can choose $P$ so that it
contains all the vertices of $G_0$
and all those of $G_l$,
as well as all but one vertex of
each of the remaining $G_i$.
Hence $|V(P)| = 2-k+\sum_{i=1}^k|V(G_i)|$.
Since $\tau(F) \leq  2-k+\sum_{i=1}^k|V(G_i)|$,
and $v \in F$ was
chosen arbitrarily, it follows that $F$ is a
CND graph. \hspace*{\fill}\qd
\end{proof}

\begin{remark}
\label{remark1}
By Lemma~\ref{cubic-admissible}
the cubic graphs $C_n \times K_2$, $n \geq 3$
are all admissible graphs. 
Hence we get an infinite family of cubic
CND graphs (all of girth 5) by letting 
\index{cubic CND graphs}
the admissible
graphs be $C_n \times K_2$, $n \geq 3$,
and taking all the $I$-type graphs to 
be copies of  $PG^\odot$. 
If instead we use $K_4$ 
or the multigraph $C_2 \times K_2$ 
shown in Figure~\ref{admissable} for 
the cubic admissible  multigraph,  
we get two of the four smallest presently
known cubic CND graphs, each of
order 36. In Figure~\ref{small-cubic}
we illustrate the case where we use $C_2 \times K_2$.
\begin{figure}[H]
\begin{center}
\epsfig{file=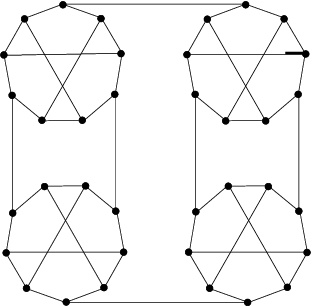}
\caption{\label{small-cubic} Cubic CND graph based on $C_2 \times K_2$}
\end{center}
\end{figure}
In Figure ~\ref{small-cubic2} we show the graph obtained
by using $K_4$. 
\begin{figure}[H]
\begin{center}
\epsfig{file=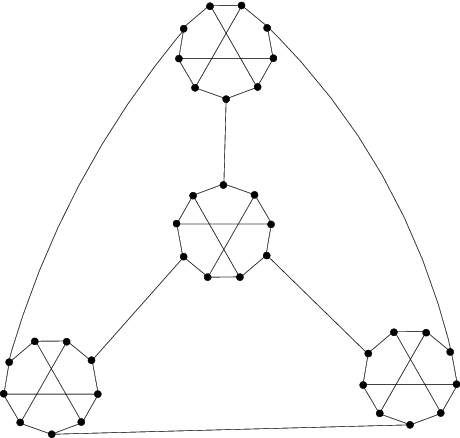}
\caption{\label{small-cubic2} Cubic CND graph based on $K_4$}
\end{center}
\end{figure}
We will
describe the other two in Subsection~\ref{sec-const}. 
None of the entries in 
Dobrynin and 
Me{l\kern-.15em'\kern-.09em}nikov's~\cite{dome02}
list of detour sequences of connected, cubic graphs
with at most 20 vertices are detour sequences 
of CND graphs. Hence a cubic CND graph of least order
has more than 20 vertices and at most 36 vertices. 
\end{remark}

\subsection{Second construction}
\label{sec-const}
Here we need a new type of graph,
which we call \textit{homogeneously
connected from two vertices}, 
\index{homogeneously connected from two\\ vertices}
or briefly
a HCTV graph. \index{HCTV graph}
The definition is:

\begin{definition}
\label{HCTV}
A simple graph $M$ is called
\textbf{homogeneously connected 
from two vertices} 
if it is $K_1$, or it 
has two vertices $x$, $y$ such that 
each vertex of $M$ is an initial 
vertex of a hamiltonian path 
with the other endvertex 
either $x$ or $y$. The 
vertices $x$, $y$ are 
called \textbf{anchors} of $M$.\index{anchors in a HCTV graph}
\end{definition}

One can easily see that if a graph $G$ is hamiltonian 
connected then it is a HCTV graph. 
On the other hand, if it 
is a HCTV graph, then it is homogeneously traceable. 
Moreover, from the definition 
it follows that the anchors of a HCTV
graph $G$ are 
endvertices of a hamiltonian path in $G$. 
Examples of 
HCTV graphs are the complete graphs 
$K_n$, $n \ge 1$, 
and the complete  balanced bipartite graphs 
$K_{n,n}$, $n\ge 1$.
\begin{figure}[H]
\begin{center}
\psfrag{a}[c][c]{$C_3 \times K_2$}
\psfrag{b}[c][c]{$F$}
\epsfig{file=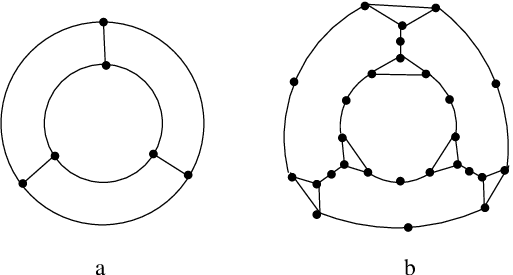}\\
\caption{\label{infl-insertion} An illustration of the 
inflation and insertion operations} 
\end{center}
\end{figure}
Our next construction for CND graphs is
described in the following theorem.
We will need a similar inflation
operation on graphs or multigraphs
to that described in the first construction.
Starting with any graph or multigraph
we can \textit{inflate}
a vertex $v$
with a complete graph $K_n$, where 
$n \geq \deg(v)$. In other words,
we delete the vertex $v$ 
and add a copy of $K_n$
in its place, joining
the former neighbours of the 
deleted vertex to distinct vertices
of $K_n$ (which we will
call the \textit{inflation}\index{inflation vertices of $K_n$} 
vertices of $K_n$) 
by using the edges
which were incident 
with $v$.  
We also need the operation of
\textit{inserting} a HCTV graph $M$
\index{inserting a HCTV graph}
in an edge $e$ of 
any graph or multigraph. This
is done by deleting $e$,
and then joining the
two vertices which were
incident with $e$ to the anchors
of $M$ by a matching if $M \neq K_1$,
or to $M$ if $M= K_1$.  
This is illustrated in Figure~\ref{infl-insertion}
for the case where we inflate $C_3 \times K_2$
with copies of $K_3$ and insert $K_1$ in
each edge of $C_3 \times K_2$ to
get the graph $F$.

\begin{theorem}
\label{method2}
Let $L$ be an admissible multigraph
and $k = |E(L)|$. Let $M_i$, $i=1,2,3,\ldots,k$
be HCTV graphs, all of the same order $m$.
Let $F$ be obtained by first
inflating each vertex $v$ of  $L$
with  a copy of $K_n$
for some $n \geq \deg(v)$, 
and then inserting 
a copy of a HCTV graph $M_i$ in 
the edge $e_i$ of $L$, 
$i = 1,2, \ldots, k$. 
Then $F$ is a CND graph.
\end{theorem}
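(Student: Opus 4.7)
My plan is to verify that $F$ satisfies the three defining properties of a CND graph in turn: $F$ is connected, $\tau(F) < |V(F)|$, and every vertex has the same detour order. Connectedness is immediate since $L$ is connected and both the inflation of a vertex by a (connected) $K_n$ and the insertion of an HCTV $M_i$ together with its two linking edges preserve connectedness.

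For the quantitative heart of the argument I will project every path $P$ in $F$ to a trail $T$ in $L$ whose edges correspond to the HCTVs that $P$ traverses from anchor to anchor. Because the two anchors are the only bridges between an HCTV and the rest of $F$, each HCTV is used at most once by $P$, so the projection really is a trail. An HCTV that meets an endvertex of $P$ but is not fully traversed contributes a ``dangling'' edge of $L$ incident to an endvertex of $T$ and not lying in $T$; write $d \in \{0,1,2\}$ for the number of such danglers. The total number of HCTVs touched by $P$ is then at most $|E(T)| + d$. The key inequality is $|E(T)| + d \leq t(L)$: when $d = 2$, pre/appending the two danglers at the two endvertices of $T$ produces a legitimate trail of length $|E(T)| + 2$ in $L$, and the cases $d = 1$ and $d = 0$ are analogous. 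This yields $\tau(F) \leq t(L) \cdot m + \sum_{v \in V(L)} n_v$, and then $\tau(F) < |V(F)| = km + \sum_v n_v$ follows from Condition A-1. The matching lower bound comes from taking a longest spanning trail $T$ in $L$ (which exists by A-2) and building a path in $F$ that starts in $K_{n_{v_0}}$ at an endvertex of $T$, traverses each visited $K_{n_v}$ by a Hamiltonian (sub)path between the appropriate inflation vertices, and crosses each HCTV anchor-to-anchor via its guaranteed Hamiltonian path.

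To establish the detour-graph property, I will construct, for every vertex $u \in V(F)$, a path starting at $u$ of length exactly $\tau(F)$. If $u$ lies in some $K_{n_v}$, Condition A-2 furnishes a spanning longest trail of $L$ starting at $v$ along any prescribed first edge, and the Hamiltonian connectedness of $K_{n_v}$ lets me leave $u$ and reach the inflation vertex matching that first edge while covering $K_{n_v}$ on the way. If $u$ lies inside an HCTV $M_i$ on edge $e_i = vv'$, the HCTV property supplies a Hamiltonian path of $M_i$ from $u$ to some anchor, say $y_i$; I then apply A-2 to the pair $(v, e_i)$, obtaining a longest trail $T^{*}$ in $L$ that begins with $e_i$ from $v$ to $v'$, and use $T^{*} - e_i$, a trail of length $t(L) - 1$ starting at $v'$, to continue through $K_{n_{v'}}$ and the remaining $K$'s and HCTVs. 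In each case the resulting path has length $t(L) \cdot m + \sum_v n_v = \tau(F)$, so $\tau(u) = \tau(F)$ for every $u$.

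The main obstacle will be the bookkeeping when the trail $T$ visits some vertex $v$ of $L$ with multiplicity greater than one: the single copy of $K_{n_v}$ must then be partitioned into several pairwise vertex-disjoint subpaths, one per visit, each joining the prescribed pair of inflation vertices. Here I will exploit $n \geq \deg(v)$, which ensures enough inflation vertices are available, together with the strong path-partition property of complete graphs, which lets any matching of the inflation vertices into pairs be realised by vertex-disjoint paths whose union covers all of $V(K_{n_v})$. The other delicate point, matching the right anchor when an endvertex of the path lies inside an HCTV whose HCTV-certified Hamiltonian path from $u$ only reaches one of $x_i, y_i$, is resolved precisely by the freedom in choosing the starting edge granted by Condition A-2.
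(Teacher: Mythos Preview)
Your argument is correct and mirrors the paper's: project paths in $F$ to trails in $L$ for the upper bound $\tau(F)\le t(L)\,m+\sum_v n_v$, then lift the spanning longest trails supplied by A-2 back to paths in $F$ for the matching lower bound at every vertex; your dangler and path--partition analyses in fact fill in details the paper waves away as ``obvious'' and ``easy to see.'' One small point to nail down: when $u\in K_{n_{v_0}}$ happens to be the inflation vertex of some edge $e'$, you must take $e_1=e'$ in your appeal to A-2, since a longest trail starting at $v_0$ necessarily uses every edge at $v_0$, and any other choice of $e_1$ would force $u$ to serve as a prescribed endpoint of two distinct subpaths in your $K_{n_{v_0}}$-partition.
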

\begin{proof}
Suppose that the longest trail in  $L$
has length $l$. Then no path
in $F$ can contain vertices from more than $l$ copies
of the HCTV graphs, since otherwise we can use the
path in an obvious way to construct a 
trail in $L$ with length
greater than $l$. 
Hence $\tau(F) \leq |V(F)| - (k-l)m$.
Next we show that each vertex of $F$ is an endvertex 
of a path of order $|V(F)| - (k-l)m$.
Let $v \in V(F)$. We have two cases
to consider:
\begin{enumerate}
\item Suppose that
$v$ belongs to one
of the complete subgraphs of $F$, say $K_v$,
obtained by inflating $L$.
Suppose that edge $e_1$ in $L$ was incident with
the vertex $v_1 \in V(L)$ which was inflated 
to give $K_v$. Let $w$ be the inflation vertex
of $K_v$ corresponding to $e_1$.
Then $L$ has a spanning trail $T$
of length $l$ starting
$v_1, e_1 \ldots$. We can now
construct a path $Q$ starting $v,\ldots,w,e_1\ldots$
that moves through the inflations
and the insertions in accordance
with the trail $T$.
The trail $T$ spans $L$, hence $Q$ contains
vertices from all the 
complete graphs used to inflate $L$.
It is easy to see that we can use the properties of the
complete graphs to ensure that $Q$ passes through all the
vertices of all the complete graphs used in
the inflation process, and the
properties of the HCTV graphs allow us  
to choose $Q$ so that
it contains all the vertices of the 
$l$ HCTV graphs lying
in the path. Hence $|V(Q)| = |V(F)| - (k-l)m$.

\item If $v \in V(M_i)$ for some $i$ we can 
similarly use a longest
trail in $L$  to construct a path in $F$
starting at $v$ and having
order $|V(F)| - (k-l)m$.
\end{enumerate}
Since $v \in V(F)$ was chosen arbitrarily, it follows that
$F$ is a CND graph. \hspace*{\fill}\qd
\end{proof}
\begin{figure}[H]
\begin{center}
\epsfig{file=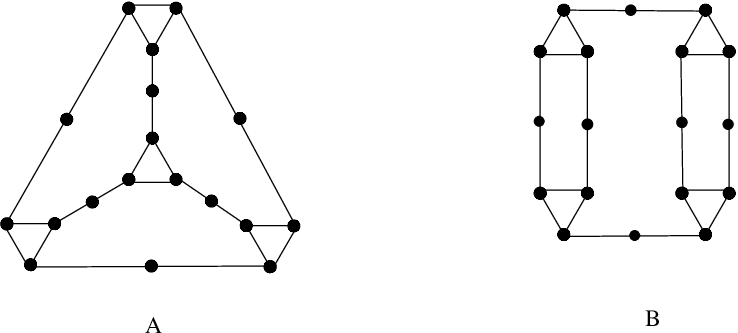}
\caption{\label{smallcnd} Smallest presently known CND graphs}
\end{center}
\end{figure}
\begin{remarks}
\label{remarks}
We get the following results from
this construction:
\begin{enumerate}
\item If we take 
the HCTV graphs to be $K_1$
and inflate each vertex of $L$
with a $K_3$, and choose $L$
to be either the graph in Figure~\ref{admissable} 
or $K_4$, then we get the two smallest 
presently known CND \index{smallest known CND graphs}
graphs, each of order 18 and size 24, 
with detour deficiency one. These two
graphs are shown in Figure~\ref{smallcnd} 
It is proved in
Chapter 3
%\cite{bufrsi} 
that these two
graphs are the smallest 
(with regard to both size and order)
$2$-connected, nontraceable,
claw-free graphs. (A graph is claw-free if
it does not contain $K_{1,3}$ as an 
induced subgraph.) %\index{claw-free graph}
Hence they are the smallest
claw-free, CND graphs.

\item Let $L$ be $C_n \times K_2$,
$n \geq 3$, the HCTV graphs
be $K_2$, and we inflate $L$
with $K_3$. The resulting CND graphs
have order $12n$ and size $15n$, $n \ge 3$.
Hence these CND graphs all realise the
lower bound on the size of CND graphs
given in Theorem~\ref{prop-th}. 
If instead of $K_2$ we use $K_1$ for
the HCTV graphs we get a family
of CND graphs having detour deficiency
$n-1$, $n \geq 3$, and, of course, girth 3.

\item Let $L$ be $K_4$ or the
admissible multigraph shown in Figure~\ref{admissable}, 
the HCTV graphs be $K_2$, and
we inflate $L$ with $K_3$.
The resulting two CND graphs
have order 24 and size 30,
so they also realise the lower
bound on the size of CND graphs. 
Thus we have examples of CND
graphs of order $12n$, for every $n \geq 2$, 
having the least possible number
of edges.

\item In Remark~\ref{remark1}
we gave two examples of cubic CND graphs
of order 36.
We get two more such graphs 
by choosing
$L$ to be either $K_4$ or
the multigraph shown in Figure~\ref{admissable},
the HCTV graphs to be copies of
$K_4 -e$ (ie the graph obtained by
deleting any edge from $K_4$),
and we inflate $L$ with copies
of $K_3$.

\item It is easy to get CND
graphs of all orders greater than 17.
For example, let $L$ be $K_4$, 
the HCTV graphs be $K_1$, and
we inflate $L$ with three
copies of $K_3$ and one
copy of $K_{n+3}$, $(n \geq 0)$.
Then we get CND graphs of
order $18+n$, for $n \geq 0$. 
\end{enumerate}
\end{remarks}
\addtocontents{toc}{\protect\hfill\protect\textit{Page}\vskip0pt} 
\subsection{Third construction}
We need the following definition of another 
type of graph:
\begin{definition}
\label{r-type}
A simple graph $G$ is said to be
an \textbf{R-type} graph if \index{R-type graph}
it has a distinguished set $D =\{a, b, c\}$
of three vertices such that
\begin{enumerate}
\item[R-1] For every pair $u$, $v$ of
vertices in $D$ there is a hamiltonian path
in $G$ with $u$ and $v$ as endvertices.   

\item[R-2]  For any vertex $v \in G$ there is a path
$P$ with endvertex $v$ and an endvertex in $D$,
and the remaining pair of vertices 
in $D$ are joined by
a path $Q$, disjoint from $P$, such that 
$P$ and $Q$ together span $G$.
(The cases where $P = a$ or $P = b$
or $P = c$ are included.)
\end{enumerate}
\end{definition}

Clearly, complete graphs $K_n$, $n \ge 3$, 
are $R$-type graphs, 
where the distinguished vertices can be any three
vertices of $K_n$, and in Figure~\ref{girth4}
we show an $R$-type graph with girth four.
\begin{figure}[H]
\begin{center}
\epsfig{file=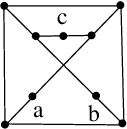}\\
\caption{\label{girth4} An $R$-type graph with girth four.}
\end{center}
\end{figure}

Using Definition~\ref{r-type} we get:
\begin{theorem}
Let $L$ be a cubic, admissible multigraph
of size $k$. Let $M_i$, $i=1,2,3,\ldots,k$
be HCTV graphs, all of the same order $m$.
Let $F$ be obtained by first
inflating all the vertices of $L$
with copies of $R$-type graphs,
and then inserting copies of the HCTV
graphs $M_i$ in each of the
former edges of $L$. 
Then $F$ is a CND graph.
\end{theorem}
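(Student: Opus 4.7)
The plan is to follow the same template used for Theorem~\ref{method1-th} and Theorem~\ref{method2}: first establish an upper bound on $\tau(F)$ from the trail structure of $L$, and then, for each vertex of $F$, construct a path whose order realises the bound. For the upper bound I would observe that any path $Q$ in $F$ projects onto a trail in $L$ by listing, in order, the edges of $L$ whose HCTV insertions $Q$ visits. Because the two anchors are the only vertices of each $M_j$ attached to the rest of $F$, and $Q$ visits each vertex of $F$ at most once, each $M_j$ contributes at most one edge to this trail. Admissibility (A-1, A-2) bounds its length by $t(L)$, so at least $k - t(L)$ of the HCTVs contribute no vertices to $Q$, giving $\tau(F) \leq |V(F)| - (k - t(L))m$. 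Nontraceability then follows from A-1.

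For the lower bound I would split into two cases according to whether $v$ lies in an R-type subgraph or in an inserted HCTV. First, suppose $v$ lies in the R-type subgraph $G_0$ inflating some $v_1 \in V(L)$. Apply R-2 at $v$ to obtain a path $P_0$ from $v$ to a distinguished vertex $a$ of $G_0$ and a disjoint path $Q_0$ between the remaining distinguished vertices $b, c$ such that $P_0 \cup Q_0$ spans $G_0$. Let $e_1$ be the edge of $L$ incident with $v_1$ corresponding to $a$, and use A-2 to obtain a spanning trail $T$ of length $t(L)$ beginning $v_1, e_1, \ldots$ and ending at some $v_l$. By the cubic analysis carried out in the proof of Theorem~\ref{method1-th}, both $v_1$ and $v_l$ appear exactly twice in $T$ and every other vertex of $L$ exactly once. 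I would then unfold $T$ into a path in $F$: start with $P_0$ from $v$ to $a$; cross each HCTV encountered on $T$ by a hamiltonian anchor-to-anchor path, which exists by the HCTV definition; at each R-type subgraph inflating a singly-visited vertex of $L$, use R-1 to traverse a hamiltonian path between the two distinguished vertices dictated by the entry and exit edges of $T$; at the internal revisit of $v_1$ use $Q_0$ between $b$ and $c$; and at $v_l$ apply R-2 with the final vertex $w$ of the path as the prescribed endpoint, producing the $P_l \cup Q_l$ decomposition that spans $G_l$.

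Next, suppose $v$ lies in $M_{e_1}$ with $e_1 = v_1 v_2$. The HCTV property supplies a hamiltonian path in $M_{e_1}$ from $v$ to one of its anchors, say $y_1$, lying on the $v_2$-side. Apply A-2 at $v_1$ with edge $e_1$ to get a spanning trail $T = v_1, e_1, v_2, \ldots, v_l$ of length $t(L)$, and let $T' = v_2, \ldots, v_l$ be obtained by deleting the first edge of $T$. The key point is that $T'$ still spans $L$: because $L$ is cubic, the cubic analysis forces $v_1$ to occur twice in $T$, so discarding its opening occurrence leaves $v_1$ intact inside $T'$. The desired path then starts at $v$, runs the hamiltonian path inside $M_{e_1}$ to $y_1$, crosses into $G_2$ at the distinguished vertex corresponding to $e_1$, takes an R-1 hamiltonian path in $G_2$ to the distinguished vertex corresponding to the first edge of $T'$, and thereafter unfolds $T'$ exactly as in the previous case. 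The symmetric subcase, in which the HCTV provides a hamiltonian path from $v$ to $x_1$ instead, is handled by applying A-2 at $v_2$ with $e_1$ and following the analogous construction.

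The main obstacle I foresee is the justification that in the HCTV-vertex case the truncated trail $T' = T - e_1$ still spans $L$; this is precisely where the cubic hypothesis is indispensable, since without it the starting vertex of a maximum-length spanning trail need not be internally revisited and deleting the opening edge could leave the corresponding R-type subgraph uncovered. A lesser bookkeeping issue is matching, at each R-type subgraph, the entry/exit distinguished vertices dictated by the trail with the distinguished vertices supplied by R-1 or R-2. However, R-1 delivers a hamiltonian path between \emph{any} two distinguished vertices, and R-2 is only invoked at the terminal $v_l$ (or at $v$ itself in Case A), so its free endvertex can be chosen consistently with the trail's visit pattern; the constructions therefore assemble without conflict, yielding a path of order $|V(F)| - (k - t(L))m$ with endvertex $v$ in both cases and establishing that $F$ is a CND graph.
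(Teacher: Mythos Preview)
Your proposal follows the same approach as the paper (which simply declares the details ``very similar to the proof of Theorem~\ref{method2}'' and gives only a one-sentence sketch), but with far more care; your upper-bound argument and Case~A are sound.

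There is one overlooked corner case in Case~B. You enter $G_2$ from $M_{e_1}$, use an R-1 hamiltonian path to cross $G_2$, and then ``unfold $T'$ exactly as in the previous case''. But A-2 only guarantees \emph{some} spanning trail $T$ beginning $v_1,e_1,\ldots$, and this trail may satisfy $v_2 = v_l$ (e.g.\ $1,2,3,1,4,2$ in $K_4$ starting with the edge $12$). In that event $G_2$ is the doubly-visited subgraph of $T'$, and after the R-1 path has consumed all of $G_2$ on the first pass, the terminal re-entry via the distinguished vertex for $e_{t(L)}$ has no unused vertex to land on. The repair is immediate and uses exactly the mechanism you already deploy in Case~A: when $v_2 = v_l$, treat $G_2$ as the terminal doubly-visited subgraph and apply R-2 at the distinguished vertex $d_{e_{t(L)}}$. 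This yields a path $Q_l$ from $d_{e_1}$ to $d_{e_2}$ spanning $G_2 - d_{e_{t(L)}}$ for the first pass, with the walk terminating at $d_{e_{t(L)}}$ on re-entry. With this single adjustment your construction assembles correctly and matches the paper's intended argument.
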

\begin{proof}
We omit the details of the proof 
since they are very similar 
to the proof of Theorem~\ref{method2}.
We simply note that we can use
a longest trail in $L$ 
(let its length be $l$) 
to define a path $Q$, starting
at an arbitrary  vertex $v \in V(F)$, which
contains all the vertices of the $R$-type
graphs and omits all the vertices of $k - l$ 
of the HCTV graphs,
and no path in $F$ can contain 
more vertices than $Q$. \hspace*{\fill}\qd
\end{proof}

This construction gives examples of
CND graphs with girth 4, 
\index{CND graphs with girth 4}
if we use
the graph shown in Figure~\ref{girth4} as our R-type
graphs and $K_1$ for HCTV graphs. For
$L$ we can take any cubic, admissible 
multigraph, for example $K_4$.

\subsection{Fourth construction}
Here we will need the concept
of a maximal hypohamiltonian graph.
\index{maximal hypohamiltonian graph}
A graph $G$ is \textit{hypohamiltonian} 
\index{hypohamiltonian graph}
if $G$ is
not hamiltonian, but every vertex deleted subgraph
$G-v$ of $G$ is hamiltonian. A hypohamiltonan
graph $G$ is called \textit{maximal hypohamiltonian}
if $G + e$ is hamiltonian for each 
$e \in E(\overline{G})$.

We now describe a construction which allows
us to use certain maximal hypohamiltonian
graphs to construct CND graphs
with girth 7 and 6.

We begin by defining the types of graphs we
need for the construction.

\begin{definition}
A simple graph $G$ is said to be
a \textbf{$U$-type graph} 
\index{$U$-type graph}
if it contains
a  set $D = \{a, b, c\}$ of
three distinguished vertices such that
\begin{enumerate}
\item[U-1] For each pair of vertices
in $D$ there is a path in $G$ 
having those two vertices as endvertices 
and containing all vertices
of $G$ except the other vertex in $D$.
\item[U-2] There is no spanning path of $G$ with both
endvertices in $D$.
\item[U-3] $G$ is traceable from each vertex in $D$.
\item[U-4] If $v \in V(G)$,
and $v \notin \{a, b, c\}$,
then there is a path from $v$ to a vertex in $\{a, b, c\}$ which
spans $G$.
\end{enumerate}
\end{definition}

A maximal hypohamiltonian graph
which has at least one vertex of degree
three can be used to construct a  
$U$-type graph. This is proved in the next
theorem.

\begin{theorem}
Let $H$ be a maximal hypohamiltonian graph
containing a vertex $w$ of degree $3$. Let
the vertices $a$, $b$, $c$ of $H$ be adjacent to
$w$. Let $G = H -w$. Then $G$ is a
$U$-type graph with $D=\{a, b, c\}$.
\end{theorem}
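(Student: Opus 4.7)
The plan is to verify the four conditions U-1--U-4 in turn, using hypohamiltonicity to handle U-1, U-2, U-3, and using maximality (together with $\deg_H(w)=3$) only where it is really needed, namely for U-4. The key observation in every case is that, since $w$'s only neighbours in $H$ are $a,b,c$, any Hamilton cycle through $w$ in a subgraph of $H$ that contains $w$ must use exactly two of the edges $wa,wb,wc$, which heavily constrains the cycle.

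For U-1, fix an unordered pair of vertices in $D$, say $a,b$, and let $c$ be the remaining vertex in $D$. Since $H$ is hypohamiltonian, $H-c$ is hamiltonian; let $C$ be a Hamilton cycle in $H-c$. Because $w\in V(H-c)$ and the only neighbours of $w$ in $H-c$ are $a$ and $b$, the cycle $C$ must use both edges $wa$ and $wb$. Deleting $w$ from $C$ therefore yields a Hamilton path of $H-c-w = G-c$ with endvertices $a$ and $b$. The other two pairs are handled identically, which gives U-1. For U-2, suppose for contradiction that some pair in $D$, say $a,b$, is joined by a spanning path $P$ of $G$. Then $w,a,P,b,w$ is a Hamilton cycle in $H$, contradicting the fact that $H$ is hypohamiltonian and hence not hamiltonian.

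For U-3, apply hypohamiltonicity with the deleted vertex equal to $w$: the graph $H-w=G$ is itself hamiltonian, so it has a Hamilton cycle, and a Hamilton cycle is traceable from every vertex (in particular from $a$, $b$, and $c$). Thus U-3 follows with no further work. For U-4, let $v\in V(G)\setminus\{a,b,c\}$. Because $\deg_H(w)=3$ with neighbour set exactly $\{a,b,c\}$, the pair $wv$ is a non-edge of $H$, so by maximal hypohamiltonicity $H+wv$ is hamiltonian. Any Hamilton cycle of $H+wv$ must use the new edge $wv$, for otherwise it would already be a Hamilton cycle in $H$; the cycle also uses exactly one of $wa,wb,wc$, say $wx$ with $x\in\{a,b,c\}$. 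Removing $w$ from this cycle leaves a Hamilton path of $G$ from $v$ to $x$, which is precisely what U-4 demands.

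The argument involves no delicate estimates; the only step that uses more than hypohamiltonicity is U-4, where maximality is invoked to supply, for each non-distinguished $v$, a Hamilton cycle of $H+wv$ through $wv$. The single mild obstacle is ensuring that every hypothesised Hamilton cycle actually traverses $w$ via the expected pair of edges, but this is forced immediately by the fact that $\{a,b,c\}$ is the entire neighbourhood of $w$ in $H$.
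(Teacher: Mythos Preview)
Your proof is correct and follows essentially the same approach as the paper: hypohamiltonicity of $H$ applied to $H-x$ for $x\in D$ yields U-1, nonhamiltonicity of $H$ yields U-2, hamiltonicity of $H-w$ yields U-3, and maximality applied to the non-edge $wv$ yields U-4. The only cosmetic difference is that for U-1 the paper deletes the vertex $a$ to obtain the $b$--$c$ path, whereas you delete $c$ to obtain the $a$--$b$ path, which is the same argument up to relabelling.
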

\begin{proof}
\begin{enumerate}
\item $H-a$ is hamiltonian, and $b$, $c$
are the only vertices of $H-a$ adjacent to $w$.
Hence we have a hamiltonian cycle $C$ in 
$H-a$ containing $w$ and such that $b,w,c$ is
a path on $C$. Therefore the path
on $C$ from $b$ to $c$ which omits $w$ contains all
the vertices of $H-w$ except $a$. Similarly for
the other vertices in $D$.
Thus U-1 is satisfied.
\item If there was a spanning path in $H-w$
with endvertices in $D$ then $H$ would
be hamiltonian, hence U-2 is satisfied.

\item  U-3 follows since $H-w$ is hamiltonian.
\item Since $v$ is not adjacent to
$w$, and $H$ is maximal nonhamiltonian, it
follows that $H+vw$ is hamiltonian. A hamiltonian
cycle $C$ in $H+vw$ must contain the edge $vw$,
and one of $a$, $b$, $c$ must be adjacent to $w$
on $C$. Suppose that $a$ is adjacent to $w$ on $C$.
Then the path on $C$ from $v$ to $a$ which
omits $w$ contains all the vertices of $H-w$.
Thus U-4 is satisfied. \hspace*{\fill}\qd
\end{enumerate}
\end{proof}

We also need the following type of multigraph:
\begin{definition}
A \textbf{presentable} multigraph $S$ is
\index{presentable multigraph}
a  multigraph such that
\begin{enumerate}
\item[S-1] $S$ is cubic.
\item[S-2] There is a longest trail in $S$ beginning
$v,e,\ldots$ from each vertex $v$ and each edge
$e$ incident with $v$, and this longest trail spans $S$.
\item[S-3] There is a hamiltonian path
beginning $v,e,\ldots$ from each vertex
$v$ and edge $e$ incident with $v$.
\end{enumerate}
\end{definition}

Some examples of presentable multigraphs are
$K_4$, $C_n \times K_2$, the Petersen graph
and the multigraph shown in Figure~\ref{admissable}.

The construction of CND graphs 
using presentable multigraphs
and $U$-type graphs 
is described in  Theorem~\ref{p-th}.

\begin{theorem}
\label{p-th}
Let $S$ be a presentable
multigraph
of order $k$. Let $F$ be obtained by 
inflating $S$ with
$U$-type graphs $G_i$, $i=1,2,3,\ldots,k$.
Then $F$ 
is a CND graph.
\end{theorem}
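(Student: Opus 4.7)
The plan is to parallel the proof of Theorem~\ref{method1-th}, proving $\tau(F)=\sum_{i=1}^k|V(G_i)|-(k-2)$ and that a detour of this length starts at every vertex of $F$. Connectedness of $F$ is immediate from S-3 and the connectedness of each $G_i$; since $S$ is cubic, $k\ge 4$, and therefore $\tau(F)<|V(F)|$ will force $F$ to be nontraceable. For the upper bound I will take an arbitrary path $W$ in $F$ and look at the trail $T_W$ it induces in $S$ by recording the edges of $S$ that $W$ traverses. Since $S$ is cubic, each $v_i\in V(S)$ has ``trail-degree'' at most $3$, equal to $2$ whenever $v_i$ is internal to $T_W$; in that case the intersection of $W$ with $G_i$ is a single subpath whose endpoints are two distinguished vertices of $G_i$, and by U-2 this subpath cannot span $G_i$. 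At most two copies contain an endvertex of $W$, so if $T_W$ spans $S$ then at least $k-2$ transit copies each lose a vertex, and if $T_W$ does not span $S$ the unvisited copies lose all their vertices, making the bound strictly tighter.

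For the lower bound, fix $v\in V(F)$ with $v\in G_a$ and split into cases. If $v$ is not a distinguished vertex of $G_a$, then U-4 supplies a spanning path of $G_a$ from $v$ to some distinguished vertex $d$; let $e$ be the edge of $S$ at $v_a$ corresponding to $d$, and use S-3 to obtain a hamiltonian path $T$ in $S$ starting $v_a,e,\ldots$. The detour in $F$ then follows $T$: it spans $G_a$ by U-4, crosses each of the $k-2$ interior copies of $T$ via a U-1 path between the appropriate pair of distinguished vertices (contributing $|V(G_i)|-1$ each), and spans the terminal copy of $T$ via the hamiltonian path from its entry distinguished vertex guaranteed by U-3. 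If instead $v=a_a$ is distinguished, I invoke S-2: the longest trail of $S$ starting at $v_a$ via the edge at $a_a$ has length $k+1$, visits $v_a$ and its terminal vertex $v_l$ twice each with trail-degree $3$, and every other vertex of $S$ exactly once with trail-degree $2$ (the prepending/appending argument from the proof of Theorem~\ref{method1-th} forces all three edges at each endpoint to appear in the trail). The path in $F$ then starts as the singleton $\{v\}$, exits via the edge at $a_a$, uses U-1 on each transit copy, re-enters $G_a$ at $b_a$ and leaves at $c_a$ along a U-1 path spanning $V(G_a)\setminus\{a_a\}$, continues, and finally terminates in $G_{v_l}$ at the one distinguished vertex avoided by the U-1 transit through $G_{v_l}$. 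Both constructions yield detours of length $\sum_i|V(G_i)|-(k-2)$, matching the upper bound, so $F$ is a detour graph.

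The main obstacle is the distinguished-vertex case: the $U$-type definition has no analogue of I-2, so I cannot simultaneously split $G_a$ into a path from an arbitrary interior vertex to one distinguished vertex and a disjoint path between the other two. The case split above is precisely the workaround. When $v$ is distinguished, the ``first visit'' to $G_a$ can be allowed to collapse to $\{v\}$ and U-1 takes care of the ``second visit'', permitting the longest trail from S-2 to be used cleanly. When $v$ is not distinguished, one must downshift to the hamiltonian path from S-3, since employing a longest trail would require re-entering $G_a$ after U-4 has already exhausted its vertices.
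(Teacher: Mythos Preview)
Your proposal is correct and follows essentially the same route as the paper: the same two-case split on whether $v$ is distinguished, with S-2 (longest spanning trail) invoked in the distinguished case and S-3 (hamiltonian path) in the non-distinguished case, and U-1 through U-4 applied exactly as you describe. The paper handles the distinguished case by a bare cross-reference to the proof of Theorem~\ref{method1-th}, so your explicit unpacking of that step --- the first visit to $G_a$ collapsing to $\{v\}$, the second visit handled by U-1, and the terminal copy $G_{v_l}$ spanned by a U-1 transit plus the leftover distinguished vertex --- is in fact clearer than the original.
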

\begin{proof}
If a path $P$ in $F$ contains vertices from each 
$G_i$ then at least $k-2$ vertices
of $F$ are not in $P$, since,
by condition U-2,
at least one vertex must be omitted from
every $G_i$ which does not contain an endvertex
of $P$. Hence 
\[
\tau(F) \leq \sum_{i=1}^k |V(G_i)|-k+2, 
\]
and therefore $F$ is nontraceable.  
Next we show that each vertex of $F$ is an endvertex
of a path of order 
$\sum_{i=1}^k |V(G_i)|-k+2$. Let $v \in V(F)$.
We have two cases to consider.
\begin{enumerate}
\item Suppose that $v$ is a distinguished vertex
of some $U$-type subgraph $G_0$ of $F$. 
Then, following the same procedure described
in the proof of Theorem~\ref{method1-th},
we can construct a path $Q$ in $F$ starting at $v$
with $|V(Q)| = \sum_{i=1}^k |V(G_i)|-k+2$.

\item Suppose that $v \in V(G_0)$,
for some $U$-type subgraph $G_0$ of $F$,
but that $v$ is not a distinguished vertex
of $G_0$. Then there is a path $P$ in $G_0$
from $v$ to some distinguished vertex 
of $G_0$, say $a_0$, which
spans $G_0$. Let $e_1$ be the edge
incident with $a_0$ whose other
endvertex is not in $G_0$.
Let $v_0$ be the vertex in $S$
corresponding to $G_0$.
Then $S$ has a hamiltonian path $Q$
starting $v_0,e_1\ldots$. We can now
construct a path $P$ in $F$
starting at $v$ that exits from $G_0$
at $a_0$ and then moves through the
$G_i$ in accordance with the path $Q$.
By properties U-3, U-4 we can choose
the path $P$ so that it contains all
the vertices of the first and
last $U$-type subgraphs in the path,
and (by U-1 and U-2) omits exactly one vertex from
each of the other $U$-type graphs.
Hence  $|V(Q)| = \sum_{i=1}^k |V(G_i)|-k+2$. 
\end{enumerate}
Since 
$\tau(F) \leq  \sum_{i=1}^k |V(G_i)|-k+2$ 
and $v \in V(F)$ was
chosen arbitrarily it follows that $F$ is a
CND graph. \hspace*{\fill}\qd \end{proof}

The Coxeter graph
(see Figure~\ref{cox})
has girth 7, is cubic, and is  maximal
hypohamiltonian, as shown by Stacho~\cite{stacho}.

Thus we get an infinite family of CND
graphs of girth 7 if we use $C_n \times K_2$, 
$n \geq 3$,
for presentable multigraphs 
and copies of a vertex deleted Coxeter graph 
for $U$-type graphs.
Clark and Entringer~\cite{isaacs}
showed that the 
Isaacs snarks $J_k$, $k \geq 7$, $k$ odd,\index{Isaacs snarks}
are maximal hypohamiltonian. Since these snarks,
and their vertex deleted
counterparts, all have girth 6 they can 
similarly be used to inflate
the vertices of $C_n \times K_2$ to give
infinite families of CND graphs
of girth 6. The bipartite CND graphs constructed in 
Section~\ref{bipartite} also have girth 6.
\begin{figure}[H]
\begin{center}
\epsfig{file=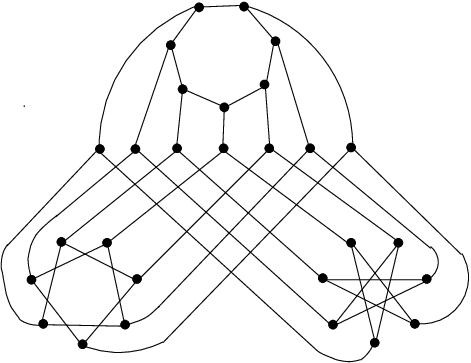}\\
\caption{\label{cox} Coxeter graph}
\index{Coxeter graph}
\end{center}
\end{figure}

\section{A construction for CND graphs
with prescribed chromatic number}
\label{bipartite}
The constructions in the previous section
did not enable us to find bipartite
CND graphs. In this section we describe 
a construction that
allows us to construct CND graphs
with prescribed chromatic numbers.
As before, our construction will
be based on inflations and insertions,
but here we will inflate the vertices
of a cycle $C_n$ with graphs
which satisfy the conditions described in the next 
definition.  

\begin{definition}
\label{dc}
Let $G$ be a simple graph of order $k \geq 3$
and $m \leq k-2$ be a positive integer.
We say that $G$ is an 
\textbf{inflator} graph 
\index{inflator graph}
with \textbf{drop}\index{drop of an inflator graph}
$m$ if it has
a set of two distinguished vertices
$\{a, b\}$ such that:
\begin{enumerate}
\item[D-1] $\tau_G(a,b) = k-m$.
\item[D-2] Each distinguished vertex is an
initial vertex of a path in $G$ containing
all the vertices of $G$ except the other
distinguished vertex.
\item[D-3] $G$ is traceable from each
distinguished vertex.
\item[D-4] Let $v \in V(G) \setminus \{a, b\}$.
Then there is a path $P$ from $v$ to
a distinguished vertex, and a path
$Q$ in $G$ with initial
vertex the remaining distinguished vertex,
such that $V(P) \cap V(Q) = \emptyset$
and $V(P) \cup V(Q) = V(G)$. The path
$Q$ may consist of a single distinguished
vertex.
\end{enumerate}
\end{definition}

Theorem~\ref{HTGraphs}
shows that inflator graphs
are closely related
to NHHT graphs.

\begin{theorem}
\label{HTGraphs}
Let $m$ and $k$ be positive integers
such that $m \leq k-2$.
Let $G$ be an inflator graph 
of order $k$ and drop $m$, 
with distinguished vertex set $\{a, b\}$. 
Let $H$ be the graph obtained from 
$G$ by adding a new
vertex $x$ and two new edges $ax$ and
$bx$. Then
$H$ is a NHHT graph.
\end{theorem}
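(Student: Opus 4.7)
The plan is to verify the two defining properties of an NHHT graph separately: $H$ is nonhamiltonian and $H$ is homogeneously traceable.

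For nonhamiltonicity, the key observation is that $x$ has degree $2$ in $H$, with neighbours $a$ and $b$. Any hamiltonian cycle in $H$ must therefore contain both edges $ax$ and $bx$. Deleting $x$ from such a cycle would yield a hamiltonian path in $G$ between $a$ and $b$, i.e., a path of order $k$. However, condition D-1 states that $\tau_G(a,b) = k-m \leq k-2$, so no such path exists in $G$, and hence no hamiltonian cycle exists in $H$.

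For homogeneous traceability, I would split into three cases depending on which vertex $v \in V(H)$ is to initiate a hamiltonian path.
\begin{enumerate}
\item If $v = x$, apply D-3 to $a$: there is a hamiltonian path of $G$ starting at $a$. Prepending $x$ via the edge $xa$ gives a hamiltonian path of $H$ starting at $x$.
\item If $v \in \{a,b\}$, say $v = a$, apply D-2 to $b$: there is a path $R$ in $G$ starting at $b$ that contains every vertex of $G$ except $a$. Then $a, x, R$ is a hamiltonian path of $H$ starting at $a$. The case $v=b$ is symmetric.
\item If $v \in V(G)\setminus\{a,b\}$, apply D-4: there exist disjoint paths $P$ and $Q$ spanning $G$, where $P$ runs from $v$ to one distinguished vertex (say $a$) and $Q$ begins at the other distinguished vertex $b$. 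Concatenating via $x$ gives $P, x, Q$, a hamiltonian path of $H$ starting at $v$.
\end{enumerate}

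There is really no serious obstacle here; the proof is a direct translation of the four conditions D-1 through D-4 into statements about $H$. The only point that requires care is the case $v = a$, where one has to use D-2 applied to the \emph{other} distinguished vertex $b$ (rather than D-3 applied to $a$), so that the edge $ax$ is available to reinsert $x$ into the path without needing to revisit $a$. Once this observation is made, each case is routine and the whole proof is essentially bookkeeping.
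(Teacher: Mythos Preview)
Your proof is correct and follows essentially the same approach as the paper: you use D-1 for nonhamiltonicity and then split homogeneous traceability into the same three cases ($v=x$ via D-3, $v\in\{a,b\}$ via D-2 applied to the other distinguished vertex, and $v\in V(G)\setminus\{a,b\}$ via D-4), with the same constructions in each case.
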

\begin{proof}
Suppose that $H$ is hamiltonian. 
Then, since the vertex $x$ is of 
degree $2$, any hamiltonian cycle must 
contain the edges $ax$ and $bx$. But any path 
joining $a$ and $b$ in $G$ has order at most 
\[
\tau_G(a,b)=k-m 
< k=|V(G)|,
\]
and therefore $H$ cannot 
contain a hamiltonian cycle.

Now we prove that any vertex of the graph $G$ is an initial 
vertex of a hamiltonian path in $H$. Firstly, consider the 
vertex $a$. According to condition D-2 of Definition~\ref{dc} 
there exists a path $P$ in $G$ starting at
$b$ and containing 
all the vertices of $G$ except $a$.
Therefore the path $a,x,P$
is a hamiltonian path in 
$H$ starting at $a$. 
Similarly, $b$ is also the initial 
vertex of a hamiltonian path in $H$.

Consider now a vertex 
$v \in V(G) \setminus \{a, b\}$. 
By condition  
D-4 of Definition~\ref{dc}, 
we have a 
path $P$ from 
$v$ to a vertex in $\{a, b\}$, say $a$,
and a path $Q$, disjoint from $P$, 
starting at $b$ such that
$V(P) \cup V(Q) = V(G)$.
Then the path 
$P,x,Q$ is a hamiltonian path in 
$H$ starting at $v$.

Finally, consider the vertex $x$. 
According to the 
condition D-3 of Definition~\ref{dc} 
there exists a 
hamiltonian path in $G$ with 
initial vertex $a$. This path, together with 
the edge $xa$, forms a 
hamiltonian path in $H$ with 
initial vertex $x$. \hspace*{\fill} \qd
\end{proof}

One can easily verify that the graph 
$G^\odot$ shown in Figure~\ref{inflator}
is an inflator graph of
order $8$ and drop one.
\addtocontents{lof}{\protect\hfill\protect\textit{Page}\vskip0pt}
\begin{figure}[H]
\begin{center}
\epsfig{file=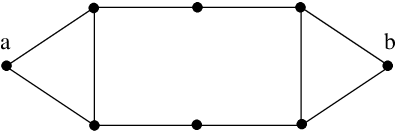}\\
\caption{\label{inflator} The smallest inflator graph.}
\index{smallest inflator graph}
\end{center}
\end{figure}
\begin{corollary}
\label{smallest}
The graph  $G^\odot$ is the smallest 
inflator graph (with respect to both 
order and size).
\end{corollary}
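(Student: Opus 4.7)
The plan is to prove both minimality claims via the correspondence established in Theorem~\ref{HTGraphs}: if $G$ is an inflator graph of order $k$ with distinguished pair $\{a,b\}$, then adjoining a new vertex $x$ joined to $a$ and $b$ yields a NHHT graph $H$ with $|V(H)|=k+1$ and $|E(H)|=|E(G)|+2$. This reduces both questions to well-studied ones about small NHHT graphs.

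For the order minimality, I would take an arbitrary inflator graph $G$ of order $k$ and let $H$ be the associated NHHT graph of order $k+1$ supplied by Theorem~\ref{HTGraphs}. As recalled in Section~\ref{properties} (following Chartrand, Gould and Kapoor~\cite{chago79}), no NHHT graphs exist of orders $3,4,\ldots,8$. Hence $k+1\ge 9$, i.e., $k\ge 8$. To conclude this part of the proof, one verifies by direct inspection of Figure~\ref{inflator} that $G^\odot$ itself meets conditions D-1 through D-4 (with drop one), so it is an inflator graph of order exactly $8$, showing that the bound is attained.

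For the size minimality, the same correspondence $|E(H)|=|E(G)|+2$ translates the question into finding the minimum size among NHHT graphs of order $9$. By the classification of the smallest NHHT graphs in~\cite{chago79,sk84}, the minimum-size order-$9$ NHHT graph has size $|E(G^\odot)|+2$, and deleting its unique degree-$2$ vertex recovers $G^\odot$. Hence $G^\odot$ minimises the size among $8$-vertex inflator graphs, which is what is required.

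The main obstacle lies in the size claim: it depends on invoking (or re-deriving) the known enumeration of edge-minimal NHHT graphs of order $9$. An alternative, self-contained route would be a direct case analysis on $8$-vertex graphs satisfying D-1--D-4 with $m=1$; here the tight condition $\tau_G(a,b)=7$ combined with the traceability from each distinguished vertex (D-3) and the spanning-pair property (D-4) should leave only a short list of candidates, each of which can be checked against $G^\odot$ to confirm that none has strictly fewer edges.
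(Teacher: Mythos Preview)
Your proposal is correct and follows essentially the same approach as the paper's own proof: both use Theorem~\ref{HTGraphs} to pass from an inflator graph $G$ to the NHHT graph $H$ with one more vertex and two more edges, and then invoke the known fact (Skupie\'n~\cite{sk84}) that the smallest NHHT graph has order $9$ and size $12$. The paper's argument is simply a terser version of yours, citing the single reference~\cite{sk84} for both the order and size bounds rather than separating the two as you do.
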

\begin{proof} 
Skupie\'n in \cite{sk84} proved that the  
smallest NHHT graph 
has order $9$ and size $12$. It can be obtained from 
$G^\odot$ by adding a new vertex $x$
and two new edges $ax$ and $bx$.  
Therefore, applying Theorem~\ref{HTGraphs}, we obtain 
the desired result. \hspace*{\fill}\qd
\end{proof}

In the next theorem we use the operation
of inflating each vertex of a cycle $C_n$
with a copy of an inflator graph. Explicitly,
we replace each vertex $v$ of $C_n$ with a
copy of an inflator graph $G$ by deleting $v$
and joining the two former neighbours of
$v$ in $C_n$ to the two distinguished vertices
of $G$ by a matching.  

We will also use HCTV graphs
(which we introduced in Definition~\ref{HCTV})
in the next theorem.  Also, if $M_i$,
$i=1,2,\ldots,n$ are HCTV graphs
and $G_i$, $i=1,2,\ldots,n$ are
inflator graphs, we will denote by
${\cal F}(G_1,G_2,\ldots,G_n,M_1,M_2,\ldots,M_n)$
the graph obtained by first
inflating each vertex $v_i$ of 
a cycle $C_n$, $n \geq 2$, with
$G_i$, and then inserting
$M_i$ in each former edge $e_i$ of $C_n$,
$i = 1,2,3,\ldots,n$.  
This construction is illustrated
in  Figure~\ref{inflation-insertion}.
If $G_1=G_2=\cdots=G_n =G$ and
$M_1=M_2= \cdots = M_n =M$, then
we simply write ${\cal F}(n,G,M)$.
\begin{figure}[H]
\begin{center}
\psfrag{a}[c][c]{$G_1$} \psfrag{b}[c][c]{$M_1$}
\psfrag{c}[c][c]{$G_2$} \psfrag{d}[c][c]{$M_2$}
\psfrag{e}[c][c]{$G_3$} \psfrag{f}[c][c]{$G_n$}
\psfrag{g}[c][c]{$M_n$}
\epsfig{file=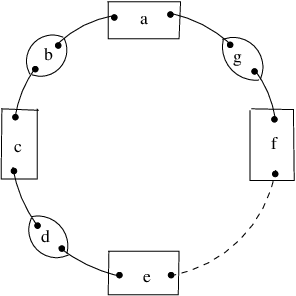}\\
\caption{\label{inflation-insertion} The construction of
${\cal F}(G_1,G_2,\dots,G_n,M_1,M_2,\dots,M_n)$.}
\end{center}
\end{figure}

\begin{theorem} 
\label{const_lema} 
Let $m$ and
$k_1,k_2,\dots,k_n$ be positive integers 
satisfying $m +2 \le\ min\{k_1,k_2,\dots,k_n\}$. Let
$G_i$, $i=1,2,\ldots, n$,  be inflator graphs
of order $k_i$, $i =1,2,\ldots n$ respectively.
Suppose that each $G_i$ has the same drop
$m$.
Let $M_i$, $i=1,2,\dots,n$ be HCTV graphs, each of 
order at least $m$.
Then the graph
\[
F = {\cal F}(G_1,G_2,\ldots,G_n,M_1,M_2,\ldots,M_n)
\]
is a CND graph.  
\end{theorem}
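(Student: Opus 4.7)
The plan is to show that $\tau(F) = |V(F)| - m(n-1)$ and that every vertex of $F$ is an initial vertex of a path of exactly this order. Since $m(n-1) \ge 1$, the first statement gives $\tau(F) < |V(F)|$, so $F$ is nontraceable, and the second makes the detour sequence of $F$ constant; together with the obvious connectivity of $F$ this shows $F$ is a CND graph. The whole analysis will be organised around the natural \emph{block structure} of $F$: the $n$ inflators $G_i$ and the $n$ HCTV graphs $M_i$ arranged cyclically as $G_0, M_0, G_1, M_1, \ldots, G_{n-1}, M_{n-1}$, with consecutive blocks joined by a single external edge meeting the two \emph{ports} (distinguished vertices of an inflator, anchors of a HCTV graph) of those blocks. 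Because every block has exactly two ports, the external edges used by any path $W$ in $F$ form a subgraph of the block cycle $C_{2n}$ in which each vertex has degree at most $2$; a brief accounting shows that this subgraph is either a subpath of $C_{2n}$ (missing at least one block entirely, with the endpoints of $W$ lying in the two endpoint blocks of the trail) or the full Eulerian circuit of $C_{2n}$ (in which case exactly one block is visited twice, with both endpoints of $W$ lying in that block).

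I then obtain the upper bound $\tau(F) \le |V(F)| - m(n-1)$ by a case split on this block trail. In every non-endpoint block $B$ the restriction of $W$ is a port-to-port subpath, which in an inflator has at most $k_i - m$ vertices by condition D-1, and in an HCTV graph can have all $|M_i|$ vertices by the anchor-to-anchor hamiltonian path that exists from Definition~\ref{HCTV}. In the Eulerian case, a terminal inflator gives a loss of exactly $m(n-1)$ from the $n-1$ other inflators, while a terminal HCTV forces all $n$ inflators into the interior and costs $mn$. In the subpath case, $t \ge 1$ blocks are missed entirely, and since $|M_i| \ge m$ and $k_i \ge m + 2 > m$, each missed block contributes a loss of at least $m$; a short check shows that missing a single HCTV of order $m$ drops the interior-inflator count by one but exactly compensates, whereas missing an inflator or missing two or more blocks strictly increases the total loss. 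Hence no path in $F$ has more than $|V(F)| - m(n-1)$ vertices.

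For the matching lower bound I construct, for each $v \in V(F)$, a path from $v$ of order $|V(F)| - m(n-1)$, adapting the inflation-insertion arguments from Theorems~\ref{method1-th}, \ref{method2} and \ref{p-th}. If $v$ lies in an inflator $G_j$, the path starts at $v$, exits $G_j$ at one distinguished vertex (using D-4 if $v$ is not distinguished, or the trivial exit when it is), threads the entire block cycle by using the HCTV hamiltonian paths in each $M_i$ and the port-to-port paths guaranteed by D-1 in each $G_i$ with $i \ne j$, re-enters $G_j$ at the other distinguished vertex, and terminates using D-2 or the $Q$-half of the D-4 decomposition, so that every vertex of $G_j$ is included. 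If $v$ lies in an HCTV graph $M_j$, the path begins with a hamiltonian path of $M_j$ from $v$ to an anchor, exits to the adjacent inflator, traverses the remainder of the cycle in the same way, and finishes inside the inflator on the other side of $M_j$ using D-3. A direct count yields exactly $|V(F)| - m(n-1)$ vertices in each case. I expect the subtle step to be the upper bound: the subpath-trail configuration that omits exactly one HCTV block of the minimum order $m$ and places the two neighbouring inflators as endpoint blocks (covered completely thanks to D-3) also attains $|V(F)| - m(n-1)$ exactly, so the bookkeeping of missed-block vertices against interior-inflator losses has to be carried out carefully to show that no configuration can do strictly better.
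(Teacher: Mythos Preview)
Your proposal is correct and follows the same approach as the paper: establish the upper bound $\tau(F)\le |V(F)|-m(n-1)$ by accounting for the $m$-vertex loss in each inflator that is traversed port-to-port and comparing this against the vertices lost in any omitted blocks, and then construct, for each vertex $v$, a path of exactly this order using conditions D-1 through D-4 together with the HCTV property. Your block-trail classification for the upper bound is actually more systematic than the paper's argument (which only explicitly treats the cases where all $M_i$ are hit or exactly one $M_i$ is missed, leaving the remaining cases implicit), but the underlying idea and the lower-bound constructions are identical.
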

\begin{proof}
It is not difficult to see that if a path $P$ contains 
at least one vertex from each $M_i$, then it goes 
through at least $n-1$ graphs from $G_1,\dots, G_n$ and, 
according to the condition D-1
of Definition~\ref{dc}, 
at least $(n-1)m$ vertices of $G_1,\dots,G_n$
are not contained in $P$.
Hence in this case the order of $P$ is at most 
\[
\sum_{i=1}^n k_i+\sum_{i=1}^n |V(M_i)|-(n-1)m.
\]
Similarly, if a path $P$ contains no  vertices 
from exactly one HCTV graph, say $M_r$,
then $P$ goes through at least $n-2$ 
inflator graphs $G_i$. 
Therefore in this case its order
is at most 
\[
\sum_{i=1}^{n} k_i+\sum_{i=1}^{r-1} |V(M_i)|
+\sum_{i=r+1}^{n}|V(M_i)|-(n-2)m.
\]
Since $|V(M_r)| \ge m$ we get
\[
\sum_{i=1}^{n} k_i+\sum_{i=1}^{r-1} |V(M_i)|
+\sum_{i=r+1}^{n}|V(M_i)|-(n-2)m
\le
\sum_{i=1}^n k_i+\sum_{i=1}^n |V(M_i)|-(n-1)m.
\]
Therefore 
\[
\tau(F)
\le \sum_{i=1}^n k_i+\sum_{i=1}^n |V(M_i)|-(n-1)m.
\]
Next we show that each vertex of $F$
is an endvertex of a path of order
\[
\sum_{i=1}^n k_i+\sum_{i=1}^n |V(M_i)|-(n-1)m.
\]
We consider three cases.  
\begin{enumerate}
\item[(i)]
Firstly, let $v$ be a vertex
of one of the HCTV subgraphs of $F$,
say $M_i$. Let $P$ be a path starting
at $v$, then passing through all
the other vertices of $M_i$, and exiting
$M_i$ via an anchor vertex of $M_i$.
$P$ then follows  the underlying cycle $C_n$
of $F$, passing alternately through all the 
inflator subgraphs of $F$
and all the other HCTV subgraphs of $F$, 
ending in an
inflator subgraph, say $G_j$, where
$j=i$ or $j=(i+1)\mod n$.  
By property D-3 and D-1 of Definition~\ref{dc},
and since $P$ enters and leaves the 
other HCTV subgraphs via anchor vertices, 
it is easy
to see that we can choose $P$ to contain
all the vertices of the HCTV subgraphs
and all the vertices of $G_j$, while
omitting exactly $m$ vertices from each of the
other $n-1$ inflator subgraphs that $P$
passes through. Hence
\[
|V(P)| = 
\sum_{i=1}^n k_i+\sum_{i=1}^n |V(M_i)|-(n-1)m.
\]

\item[(ii)] Now suppose  that $v$ is a distinguished
vertex of some inflator subgraph, say $G_i$, of $F$.
Consider a path starting at $v$ which exits
$G_i$ from $v$ and then follows the underlying
cycle $C_n$ of $F$, passing through all the HCTV
and other inflator subgraphs, and finally entering
$G_i$ via the (so far) unused distinguished vertex of $G_i$.  
The properties of HCTV graphs and property D-2
of Definition~\ref{dc} imply that we can choose
$P$ such that 
\[
|V(P)| = 
\sum_{i=1}^n k_i+\sum_{i=1}^n |V(M_i)|-(n-1)m.
\]

\item[(iii)] Now suppose that $v$ is a vertex
of some inflator subgraph $G_i$ of $G$,
but is not a distinguished vertex of $G_i$.
By condition D-4 of 
Definition~\ref{dc} there is a path
$P_1$ from $v$ to a distinguished vertex
of $G_i$, say $b_i$, and a path $Q$,
disjoint from $P_1$, starting at the other distinguished
vertex of $G_i$, say $a_i$, such that
$V(P_1) \cup V(Q) = V(G_i)$.
We can then construct
a path $P$ starting with the subpath
$P_1$, then following the underlying
cycle through all the HCTV subgraphs 
and all the inflator subgraphs,
finally ending with the
subpath $Q$.
Therefore we can choose $P$ to contain
all the vertices of $G_i$, and all the
vertices of the HCTV subgraphs, while omitting exactly
$m$ vertices from each of the other
$n-1$ inflator graphs. Hence
\[
|V(P)| = 
\sum_{i=1}^n k_i+\sum_{i=1}^n |V(M_i)|-(n-1)m.
\]
\end{enumerate}
Since
\[
\tau(F) \leq 
\sum_{i=1}^n k_i+\sum_{i=1}^n |V(M_i)|-(n-1)m
\]
it follows that $F$ is a CND graph. \hspace*{\fill}\qd
\end{proof}

Note that ${\cal F}(2, G^\odot, K_1)$
gives an alternative construction 
for one of the two smallest, claw-free CND graphs
constructed in Subsection~\ref{sec-const}.
See Figure~\ref{smallcnd}. 

In order to construct CND graphs with prescribed
chromatic number we introduce the two graphs
$B^\odot$ and $C^\odot$ shown in Figure~\ref{inflator-drop2}.
It is not difficult to verify that
both these graphs are inflator graphs
with drop two,\index{inflator graphs with drop two}
where the 
distinguished vertices of $B^\odot$ 
are the vertices labelled $a$ and $b$
in Figure~\ref{inflator-drop2}, 
and  the distinguished
vertices of $C^\odot$ are the vertices 
labelled $c$ and $d$. Clearly, both
$B^\odot$ and $C^\odot$ are bipartite
graphs. 
\begin{figure}[H]
\begin{center}
\psfrag{e}[c][c]{$B^\odot$}
\psfrag{g}[c][c]{a}
\psfrag{h}[c][c]{b}
\psfrag{f}[c][c]{$C^\odot$}
\psfrag{i}[c][c]{c}
\psfrag{j}[c][c]{d}
\epsfig{file=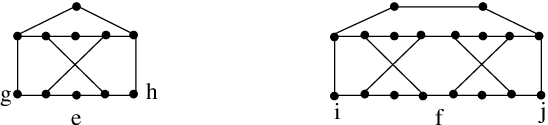}
\caption{\label{inflator-drop2} Two bipartite inflator graphs.}
\end{center}
\end{figure}

It follows 
from Theorem~\ref{const_lema}
that ${\cal F}(2, B^\odot, K_2)$ is a CND graph,
and, since $B^\odot$ is bipartite, it 
is easy to see that  
${\cal F}(2, B^\odot, K_2)$ is bipartite.
This is the smallest bipartite, CND graph that
we know of at present.\index{bipartite CND graph}

Using the graph $C^\odot$ we can prove
the following more general result.

\begin{theorem}
\label{det_bip_def}
For each positive integer $d\ge 1$ 
there exists a bipartite,
CND graph with detour deficiency at least $d$.
\end{theorem}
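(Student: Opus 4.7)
The plan is to apply Theorem~\ref{const_lema} with $G_i = C^\odot$ and $M_i = K_2$ for all $i$, producing the graph $F_n := {\cal F}(n, C^\odot, K_2)$, and then to choose $n$ large enough (and, if needed, of the right parity) so that $F_n$ is bipartite with detour deficiency at least $d$.

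First I would confirm that the hypotheses of Theorem~\ref{const_lema} are met. By the discussion following Figure~\ref{inflator-drop2}, $C^\odot$ is an inflator graph with drop $m = 2$. The graph $K_2$ is a HCTV graph in which both vertices serve as anchors, and its order $2$ equals $m$, so the condition $|V(M_i)| \geq m$ holds. Thus Theorem~\ref{const_lema} applies and $F_n$ is a CND graph for every $n \geq 2$. The proof of Theorem~\ref{const_lema} in fact establishes the equality $\tau(F_n) = |V(F_n)| - (n-1)m = |V(F_n)| - 2(n-1)$, so the detour deficiency of $F_n$ is exactly $2(n-1)$; taking $n$ so that $2(n-1) \geq d$ gives deficiency at least $d$.

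Next I would verify bipartiteness of $F_n$. Starting from a $2$-colouring $(X, Y)$ of $C^\odot$, I would track the colour classes of the distinguished vertices $c$ and $d$. In $F_n$ each former edge of the underlying cycle $C_n$ is replaced by a path of length three (a vertex of one copy of $C^\odot$, the two anchors of the inserted $K_2$, and a vertex of the next copy of $C^\odot$), and such a path reverses bipartition parity between its endpoints. If $c$ and $d$ lie in different classes of $C^\odot$, the colouring extends consistently around the cycle for any value of $n$; if they lie in the same class, consistency around $C_n$ forces $n$ to be even. In either situation $n$ can be chosen to simultaneously satisfy the parity requirement and $2(n-1) \geq d$, for instance by taking $n := 2d$.

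The main obstacle is really just the bipartiteness verification, which reduces to inspecting the positions of $c$ and $d$ in the bipartition of $C^\odot$ from Figure~\ref{inflator-drop2}. Once this small check is made, the conclusion follows directly from Theorem~\ref{const_lema} and an elementary choice of $n$.
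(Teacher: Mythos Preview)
Your proposal is correct and follows essentially the same route as the paper: construct ${\cal F}(n,C^\odot,K_2)$, invoke Theorem~\ref{const_lema} to see it is CND with detour deficiency $2(n-1)$, and choose $n$ large enough. The paper simply takes $n=\lceil d/2\rceil+1$ and asserts bipartiteness directly (the remark following the theorem confirms that $c$ and $d$ lie in different colour classes of $C^\odot$, so no parity restriction on $n$ is needed); your more cautious case analysis covering both parities is harmless and reaches the same conclusion.
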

\begin{proof}
Let $n=\left\lceil \frac{d}{2}\right\rceil+1$. Then the graph
${\cal F}(n,C^\odot,K_2)$ is a 
bipartite, CND graph, and its 
detour deficiency is $2(n-1)\ge d$. \hspace*{\fill}\qd
\end{proof}

We remark that the graph $B^\odot$ can also be 
used for the construction of an infinite sequence of 
bipartite, CND graphs. However,  since 
the vertices $a$ and $b$ of $B^\odot$ have the 
same colour in any 2-colouring of $V(B^\odot)$, 
the graph ${\cal F}(n,B^\odot,K_2)$ 
is bipartite if and only if $n$ is even.

Using the graph  $G^\odot$, 
shown in Figure~\ref{inflator},
we also have

\begin{theorem} \label{main}
Let $c$ and $d$ be  positive integers, $c\ge 3$, 
$d\ge 1$. Then
there exists a CND graph with chromatic number $c$
and detour deficiency $d$.
\end{theorem}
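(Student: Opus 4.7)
The plan is to apply Theorem~\ref{const_lema} to the graph $F := {\cal F}(d+1,\, G^\odot,\, K_c)$, where $G^\odot$ is the inflator graph of order $8$ and drop $m=1$ from Figure~\ref{inflator} and the inserted HCTV graph in every one of the $d+1$ edges of the underlying cycle is a copy of $K_c$. The complete graph $K_c$ is HCTV (any two vertices serve as anchors, and it is hamiltonian connected), its order $c \geq 3$ exceeds $m=1$, and the hypothesis $m+2 = 3 \leq 8 = \min\{k_i\}$ is satisfied. Hence by Theorem~\ref{const_lema}, $F$ is a CND graph.

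Next I would verify that the detour deficiency of $F$ equals $d$. A direct count gives $|V(F)| = (d+1)(8+c)$, while the proof of Theorem~\ref{const_lema} yields
\[
\tau(F) = \sum_{i=1}^{d+1} k_i + \sum_{i=1}^{d+1} |V(M_i)| - (n-1)m = 8(d+1) + c(d+1) - d = (d+1)(8+c) - d.
\]
Thus $|V(F)| - \tau(F) = d$, as required.

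The main step is to show that $\chi(F) = c$. The lower bound $\chi(F) \geq c$ is immediate because each inserted copy of $K_c$ is a clique of size $c$ in $F$. For the upper bound, I would first observe from Figure~\ref{inflator} that $\chi(G^\odot) \leq 3 \leq c$, so each of the $d+1$ copies of $G^\odot$ admits a proper colouring with colours from $\{1,\ldots,c\}$, and each copy of $K_c$ uses all $c$ colours. The obstacle is to make these local colourings consistent at the attachment points: for each inserted $K_c$, its two anchors are joined by a matching to two distinguished vertices (lying in the two surrounding copies of $G^\odot$) whose colours $x$ and $y$ are already fixed by the chosen colouring of those inflators. Since $c \geq 3$, there are at least $c-1 \geq 2$ permissible colours for the first anchor (those distinct from $x$) and then at least $c-2 \geq 1$ permissible colours for the second anchor (distinct from $y$ and from the first anchor's colour); the remaining $c-2$ vertices of that $K_c$ can then receive the $c-2$ leftover colours in any order. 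Doing this for each inserted $K_c$ independently produces a proper $c$-colouring of $F$, so $\chi(F) = c$, completing the proof.
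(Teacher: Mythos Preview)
Your proof is correct and follows exactly the paper's approach: both construct $F = {\cal F}(d+1, G^\odot, K_c)$, invoke Theorem~\ref{const_lema} to conclude $F$ is CND with detour deficiency $(d+1)-1=d$, and observe that $\chi(F)=c$ because $G^\odot$ is $3$-colourable while each inserted $K_c$ forces $\chi(F)\ge c$. The only difference is that the paper declares the chromatic-number claim ``evident'', whereas you spell out explicitly how to extend the $3$-colourings of the $G^\odot$ copies to a proper $c$-colouring of $F$ across the matching edges; this added detail is sound.
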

\begin{proof}
Consider the graph 
${\cal F}(d+1,G^\odot,K_c)$. 
Since $G^\odot$ 
is $3$-colourable, the  graph 
${\cal F}(d+1, G^\odot,K_c)$ evidently 
has chromatic number $c$. 
By Theorem~\ref{const_lema}, 
the graph ${\cal F}(d+1, G^\odot,K_c)$ 
is a CND graph, and its 
detour deficiency is exactly $((d+1)-1)=d$. \hspace*{\fill}\qd
\end{proof}

If $G$ is a bipartite, CND graph we can 
improve the upper bound for $\Delta(G)$ given 
in Theorem~\ref{prop-th} as follows:
\begin{theorem}\label{deg-bipartite}
If $G$ is a bipartite CND graph then
\[
\Delta(G) \leq \left\lceil \frac{\tau(G) -2}{2} \right \rceil.
\]
\end{theorem}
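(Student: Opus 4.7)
The plan is to follow the same scheme as the proof of Theorem~\ref{prop-th}(4), but exploit the bipartite structure to rule out roughly half of the positions on a detour as candidate neighbours of a vertex of maximum degree.

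First I would pick $v_1 \in V(G)$ with $\deg(v_1) = \Delta(G)$. Since $G$ is a CND graph, every vertex has detour order $\tau(G)$, so there is a detour $Q: v_1, v_2, \ldots, v_k$ of order $k = \tau(G)$ starting at $v_1$. Because $Q$ is a longest path with $v_1$ as endvertex, every neighbour of $v_1$ in $G$ must lie on $Q$.

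Next I would use the bipartition. Let $(A,B)$ be the bipartition of $G$ with $v_1 \in A$. Since $Q$ alternates between $A$ and $B$, we have $v_i \in A$ precisely when $i$ is odd, so neighbours of $v_1$ on $Q$ must appear at even indices; there are $\lfloor k/2 \rfloor$ such indices in $\{2,3,\ldots,k\}$. If $k$ is odd this already gives $\Delta(G) \le (k-1)/2 = \lceil (k-2)/2 \rceil$, as required. If $k$ is even, the candidate neighbours are $v_2, v_4, \ldots, v_k$, and I would invoke Lemma~\ref{obvious}(4) to exclude $v_k$: this is legitimate because $G$ is nontraceable (so $V(Q) \subsetneq V(G)$) and connected (so some vertex outside $V(Q)$ is joined to $V(Q)$), which supply the hypotheses needed for part (4) of that lemma. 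Excluding $v_k$ leaves at most $k/2 - 1 = (k-2)/2 = \lceil (k-2)/2 \rceil$ possibilities.

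The only subtle point is the even-$k$ case: a naive bipartite parity count gives only $\Delta(G) \le \lfloor k/2 \rfloor$, which is one too large, and invoking Lemma~\ref{obvious}(4) to eliminate the endvertex $v_k$ is precisely the step that produces the improvement to $\lceil (\tau(G)-2)/2 \rceil$. Otherwise the argument is entirely parallel to the proof of Theorem~\ref{prop-th}(4) and should present no real obstacle.
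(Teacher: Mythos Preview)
Your proposal is correct and follows essentially the same approach as the paper: take a detour $P:v_1,\ldots,v_p$ beginning at a given vertex, observe that all neighbours of $v_1$ lie on $P$, use the bipartition to restrict them to the even-indexed vertices, and rule out $v_p$ via the cycle argument (the paper simply asserts $v_1v_p\notin E(G)$ rather than citing Lemma~\ref{obvious}(4)). The only cosmetic difference is that the paper applies the bound to an arbitrary vertex rather than singling out one of maximum degree, and it does not split into parity cases.
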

\begin{proof}
Let $v_1 \in V(G)$ and let $P: v_1,v_2,\ldots,v_p$ be a detour in
$G$. Then all the neighbours of $v_1$ belong
to $P$ and $v_1v_p \notin E(G)$. A $2$-colouring
of $P$ gives
\[
\deg_G(v_1) \leq \left\lceil\frac{\tau(G) -2}{2}\right\rceil.
\]
Since $v_1$ was chosen arbitrarily we obtain
\begin{align}
\Delta(G) \leq 
\left\lceil\frac{\tau(G) -2}{2}\right\rceil. \tag*{\qd}
\end{align}
\end{proof}
\section{Open problems}
\label{open}

We conclude this chapter with a number 
of open problems. They are related
to the order of CND graphs. 
We will prove in Chapter 3 that the 
order of a claw-free,
CND graph is at least 18.
In Section~\ref{properties} we proved
that the order of a 
CND graph is at least 10.

\begin{problem}
What is the minimum order of a 
CND graph?
\end{problem}

An analogous problem can be formulated 
for bipartite graphs as
well, since the smallest bipartite,
CND graph provided by our
construction has order 26.

\begin{problem} What is the minimum order of a 
bipartite, CND graph?
\end{problem}

We gave four examples of cubic, CND graphs
of order 36, and these
are the smallest presently known cubic CND graphs.
In Remark~\ref{remark1} we mentioned that all 
cubic, CND graphs have order greater than 20.
  
\begin{problem}
What is the minimum order of a cubic,
CND graph?
\end{problem}

We have only given examples of graphs
of order $12k$, for $k \geq 2$, which realise
the lower bound on size given in Theorem~\ref{prop-th}.
\begin{problem}
For which other values of $n$ does a  CND graph
of order $n$ and 
size $\left\lceil \frac{5n}{4} \right\rceil$ exist? 
\end{problem}

Lastly, we only have examples of CND graphs
with girth up to $7$. So 
\begin{problem}
Do there exist  CND  graphs
of arbitrarily large girth?
\end{problem}

\chapter{Smallest Claw-free, CND Graphs}

\section{Introduction}
In Chapter 2 we stated that the two graphs
shown in Figure~\ref{smallcnd} were the two smallest,
claw-free, CND graphs.  We will prove that 
statement in this chapter. In fact, we will show
that the graphs in Figure~\ref{smallcnd}
are the smallest, claw-free,
$2$-connected, nontraceable graphs. 
Since in Theorem~\ref{prop-th} we showed that
all CND graphs are $2$-connected (and by
definition a CND graph is nontraceable)
it immediately follows that the two CND graphs
shown in Figure~\ref{smallcnd} are the two
smallest, claw-free, CND graphs.  
We begin by introducing some graph terminology.

If $H$, $G$ are graphs then we say that $G$ is 
$H$-free\index{H-free graph}
if $G$ has no induced subgraph isomorphic to $H$.
In particular, a graph is 
\textit{claw-free}\index{claw-free} if it has no induced subgraph
isomorphic to the claw $K_{1,3}$.
By a smallest graph in some
collection of graphs we mean a graph with the least 
order, and having the least size amongst all
graphs of that order in the collection.
  
The \emph{circumference} $c(G)$ is
the order of a longest cycle in $G$.
A circumference cycle $C$
is a longest cycle and we suppose that 
$C$ has an orientation.
Then $x^-$ denotes the predecessor
of $x \in V(C)$,
and  $x^+$ denotes the successor of $x$.
If $u \in V(C)$, $v \in V(C)$ we denote the path
on $C$ from $u$ to $v$ that accords with the
orientation of $C$ by $C[u, v]$ and the other
path on $C$ from $u$ to $v$ 
by $\overline{C}[u, v]$. The paths obtained
from $C[u, v]$ and $\overline{C}[u, v]$ by deleting
the endvertex $u$  are denoted by 
$C(u, v]$ and $\overline{C}(u, v]$ respectively,
and similarly for $C[u, v)$, $\overline{C}[u, v)$,
$C(u, v)$ and $\overline{C}(u, v)$.
Other terminology will be introduced
when required.

Many papers have been written on hamiltonian cycles
in $k$-connected, claw-free graphs for $k=2$ and $k=3$.  
Matthews and Sumner showed that a 3-connected, claw-free
graph with fewer than 20 vertices is hamiltonian, 
and conjectured that if $G$ is a 
4-connected, claw-free graph then $G$ is
hamiltonian.  (See \cite{ms} and \cite{m}.)
Here we will consider 
analogous questions for traceable graphs.

It is easy to see that the smallest
\index{smallest $k$-connected nontraceable\\ graph} 
$k$-connected, nontraceable graph 
is the complete bipartite graph
$K_{k+2, k}$.  
The existence of
$k$-connected, nontraceable graphs is more
interesting if we add the claw-free condition. 
Of course, the smallest 
connected, nontraceable, claw-free graph is
the net.
%a $K_3$ with a pendant leaf attached 
%to each vertex (called the net). 
Shepherd \cite{shp} 
and Duffus, Jacobson, and Gould \cite{djg}
have
shown that every $k$-connected, 
claw-free and net-free graph
is traceable for $k =1$, and Shepherd \cite{shp}
has shown that such graphs are hamiltonian connected
%(that is, every pair of distinct vertices
%are end-vertices 
%of a hamiltonian path) 
for $k =3$. 
Broersma, Kriesell and Ryj\'{a}\u{c}ek \cite{broer}
proved that, for $k \ge 2$, the following two 
statements are equivalent:
\begin{enumerate}
\item Every $k$-connected claw-free graph
is hamiltonian.
\item Every $k$-connected claw-free graph is traceable.
\end{enumerate}
Hence the conjecture of Matthews
and Sumner is equivalent to the conjecture
that every $4$-connected, claw-free graph is
traceable. The existence of a $3$-connected,
claw-free, nontraceable graph also follows,
since, for example, Matthews and Sumner
exhibit a $3$-connected, nonhamiltonian, claw-free
graph in \cite{ms}. 
In this 
chapter we consider the case
$k =2$ for claw-free graphs. 
%We show that all $2$-connected, claw-free graphs
%with less than 18 vertices are traceable,
%and obtain the two 
%smallest claw-free,
%$2$-connected, nontraceable graphs. 

A nontraceable graph $G$ is called 
maximal nontraceable (MNT) \index{maximal nontraceable graph}
if $G+e$ is traceable for any $e \in E(\overline{G})$.\index{MNT graphs} 
Zelinka \cite{zel}
studied these graphs, and gave constructions 
which define two classes of MNT graphs. 
He initially made the conjecture, which he later
retracted, that these two classes comprised
all MNT graphs. His constructions do not
produce $2$-connected, claw-free graphs.
In Section~\ref{last} we construct $2$-connected,
claw-free, MNT graphs of order $n$ for
each $n \geq 18$. Section~\ref{last} was written
in collaboration with J. Singleton.

\section{Smallest claw-free, 2-connected, nontraceable graphs}
In this section we show that there are two
smallest nontraceable, claw-free, 
2-connected graphs, each having
order 18  and size 24. We will use the following lemmas:
\begin{lemma}\label{bypass}
Let $C$ be a circumference cycle of a claw-free
graph $G$. If two distinct vertices $x$ and
$y$ on $C$
are adjacent, respectively, to vertices $u$  and $v$
($u = v$ allowed) in the same 
component $A$ of $G - V(C)$, then
each of the two paths with endvertices $x$
and $y$ on $C$ has at least
$\tau_A(u,v) + 2$ internal vertices.
\end{lemma}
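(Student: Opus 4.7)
The plan is to proceed by contradiction. Fix an orientation of $C$, and suppose, for a contradiction, that the arc $C[x,y]$ contains only $p \le \tau_A(u,v) + 1$ internal vertices. Label these $a_1 = x^+,\,a_2,\ldots,a_p = y^-$, and label the internal vertices of the opposite arc $\overline{C}[x,y]$ as $c_1 = y^+,\,c_2,\ldots,c_q = x^-$; let $P$ be a longest $u$-$v$ path in $A$, so $|V(P)| = \tau_A(u,v)$. The strategy is to use claw-freeness at $x$ and at $y$, together with the path $P$ through the component $A$, to construct a cycle of length strictly greater than $|V(C)|$.

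First I would collect three elementary facts, each proved by exhibiting a candidate cycle of length $|V(C)|+1$ and using that $C$ is a circumference cycle: (i) $p \ge 1$ and $q \ge 1$, for otherwise the edge $xy$ of $C$ could be replaced by the path $x, P, y$; (ii) $u \not\sim x^+$ and $u \not\sim x^-$, for otherwise $u$ could be inserted between $x$ and the stated neighbour; and (iii) symmetrically $v \not\sim y^+$ and $v \not\sim y^-$. Next I would invoke claw-freeness at the vertex $x$: among the three neighbours $x^+, x^-, u$ of $x$ at least one pair must be adjacent (else $\{x,x^+,x^-,u\}$ induces $K_{1,3}$), and (ii) rules out the two pairs through $u$, forcing the chord $a_1 = x^+ \sim x^- = c_q$. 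The identical argument at $y$ gives the chord $a_p \sim c_1$.

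These two chords, the arc $\overline{C}[x,y]$, the path $P$ traversed from $v$ to $u$, and the edges $xu$, $yv$, $xa_1$ can then be spliced into the cycle
\[
C^{*}:\ a_1,\, c_q,\, c_{q-1},\, \ldots,\, c_1,\, a_p,\, y,\, v,\, \ldots,\, u,\, x,\, a_1,
\]
where the segment $v,\ldots,u$ is $P$ traversed in reverse. When $p \ge 2$ the vertices $a_1$ and $a_p$ are distinct, so $C^{*}$ is a simple cycle on $|V(C^{*})| = q + \tau_A(u,v) + 4$ vertices; comparing with $|V(C)| = p + q + 2$ and using the assumption $p \le \tau_A(u,v)+1$ yields $|V(C^{*})| > |V(C)|$, contradicting the extremality of $C$. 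When $p = 1$ the chord endpoints $a_1 = a_p$ coincide and $C^{*}$ degenerates, so I would replace it by $x, a_1, c_q, \ldots, c_1, y, v, \ldots, u, x$, which contains $q + \tau_A(u,v) + 3$ vertices and already exceeds $|V(C)| = q + 3$ because $\tau_A(u,v) \ge 1$. The analogous argument with the two arcs interchanged establishes the same lower bound for $\overline{C}[x,y]$.

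The main obstacle is purely the bookkeeping needed to keep $C^{*}$ a simple cycle, which forces the separate treatment of $p = 1$ (where the chord endpoints collide) and requires care in the degenerate case $u = v$ permitted by the hypothesis, in which $P$ collapses to a single vertex and the segment $v,\ldots,u$ inside $C^{*}$ is just $u$ itself. Once these small-case checks are dispatched, the remainder of the proof is a standard double application of the insertion trick combined with one use of the claw-free hypothesis at each of $x$ and $y$.
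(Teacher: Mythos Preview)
Your proof is correct and follows essentially the same approach as the paper's: both derive the chords $x^-x^+$ and $y^-y^+$ from claw-freeness (after observing that $u\not\sim x^\pm$ and $v\not\sim y^\pm$ because $C$ is a longest cycle), and then splice a longest $u$--$v$ path in $A$ together with these chords and one arc of $C$ to obtain a cycle longer than $C$. The paper's proof is slightly terser --- it writes down the cycle $P,y^+,y,L,x,x^-,x^+$ directly and handles only the boundary case $x^+=y^-$ --- whereas you organise the argument by contradiction and are more explicit about the degenerate situations $p=1$ and $u=v$; but the underlying construction is the same.
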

\begin{proof}
Let $L$ be a path in $A$ which joins $u$
and $v$ and has order $\tau_A(u,v)$.
Since $C$ is a circumference
cycle, $ux^+ \notin E(G)$ and $ux^- \notin E(G)$.
Since $G$ is claw-free, this implies that
$x^-x^+ \in E(G)$. If $x^+ = y^-$, then the
cycle obtained from $C$ by replacing the path
on $C$ from $x^-$ to $y^+$ with the path 
$x^-,x^+,x,L,y,y^+$ has more vertices than $C$.
Hence $x^+ \neq y^-$ and, similarly,
$x^- \neq y^+$.
Let $P$ and $Q$ be the paths on $C$ from $x^+$ to
$y^-$ and from $y^+$ to $x^-$ respectively.
Since the cycle $P,y^+,y,L,x,x^-,x^+$ has order
$\tau_A(u,v) + |V(P)| + 4$ and
$|V(C)| = |V(P)| + |V(Q)| + 2$, it
follows that $|V(Q)| \geq \tau_A(u,v) + 2$. 
Similarly $|V(P)| \geq \tau_A(u,v) + 2$. \hspace*{\fill}\qd
\end{proof}

The next lemma describes the structure of
the components of $G- V(C)$ when $|V(G)| \leq 18$.

\begin{lemma}\label{chords}
Let $G$ be a nontraceable, 
$2$-connected, claw-free graph
such that $|V(G)| \leq 18$. Let $C$ be a 
circumference cycle of $G$. Then 
\begin{enumerate}
\item Each
component of $G - V(C)$ has 
a spanning path of
order at most three whose endvertices
are adjacent to distinct vertices of $C$.
\item  No vertex in $C$ is adjacent to vertices
in different components of $G-V(C)$.
\end{enumerate}
\end{lemma}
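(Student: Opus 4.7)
My plan is to dispose of (2) first by a short claw-freeness argument, and then to establish (1) by combining Lemma~\ref{bypass} with a path-extension technique that exploits the bound $|V(G)| \leq 18$. For (2), I argue by contradiction. Suppose $z \in V(C)$ is adjacent to $u_1 \in V(A_1)$ and $u_2 \in V(A_2)$ with $A_1 \neq A_2$. Since $C$ is a circumference cycle, neither $z^-$ nor $z^+$ can be adjacent to $u_i$ (else replacing the edge $z^-z$ on $C$ by $z^-, u_i, z$ produces a longer cycle), and $u_1 u_2 \notin E(G)$ since they lie in distinct components. Hence $\{u_1, u_2, z^-\}$ is an independent triple in $N(z)$, yielding a claw centred at $z$ and contradicting claw-freeness.

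For (1), fix a component $A$ of $G - V(C)$, and let $T = \{v \in V(A) : N(v) \cap V(C) \neq \emptyset\}$. Since $G$ is 2-connected, a standard cut-vertex argument forces $A$ to attach to at least two distinct vertices of $V(C)$, and (provided $|V(A)| \geq 2$) forces $|T| \geq 2$ with two members realising distinct attach points. Fix such a pair $u \neq v$ in $T$ with attach points $x, y \in V(C)$, $x \neq y$. Combining Lemma~\ref{bypass} (which gives $|V(C)| \geq 2\tau_A(u,v) + 6$) with the $G$-path obtained by running $C$ from $x^-$ backwards to $y^+$, entering $A$ at $v$, crossing through $A$ along a longest $u$-$v$ path, exiting at $x$, and traversing $C$ from $x^+$ to $y^-$, we obtain a path in $G$ of order $|V(C)| + \tau_A(u,v)$. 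Since $G$ is nontraceable, $|V(C)| + \tau_A(u,v) \leq \tau(G) \leq |V(G)| - 1 \leq 17$, and together with the bypass inequality this forces $\tau_A(u,v) \leq 3$.

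To show $|V(A)| \leq 3$, suppose $|V(A)| \geq 4$. I would enumerate the connected claw-free graphs of order at least four that could realise $A$ (for order four these are $K_4$, $K_4-e$, $C_4$, $P_4$, and the paw), determine the admissible sets $T$ using 2-connectedness together with the claw-freeness argument from (2) applied at attach vertices of $C$, and in each surviving case produce a Hamiltonian path of $G$. The main tool is a strengthened construction: take a Hamiltonian path of $A$ beginning at some $u \in T$ (which always exists under the forced $T$, by direct inspection of the five graphs above) and append it to a traversal of all of $C$, yielding a $G$-path of order $|V(C)| + |V(A)|$. Combined with $|V(G)| \leq 18$ and an incorporation of any remaining components via further detours, this path is Hamiltonian, contradicting nontraceability. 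Once $|V(A)| \leq 3$ is established, a final short claw-freeness argument at a hypothetical common attach point (identical in spirit to the proof of (2)) supplies a spanning path of $A$ whose endvertices are adjacent to distinct vertices of $C$.

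The main obstacle is establishing $|V(A)| \leq 3$. The inequality $\tau_A(u,v) \leq 3$ proved above does not on its own force this bound: in the paw, with $T$ forced to consist only of the pendant and its support, one has $\tau_A = 2$ while $|V(A)| = 4$. The delicate step is to upgrade the detour construction to one that follows a Hamiltonian path of $A$ starting at an attach vertex, and then to incorporate the remaining components of $G - V(C)$ into the overall $G$-path so that what results is genuinely Hamiltonian; this requires simultaneously using the bypass bound, the claw-freeness arguments at attach vertices of $V(C)$, and the global order bound $|V(G)| \leq 18$.
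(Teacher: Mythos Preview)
Your argument for (2) is correct and essentially identical to the paper's. Your derivation of $\tau_A(u,v)\le 3$ is also sound and matches the paper's bound $s<4$: both combine Lemma~\ref{bypass} with a path through all of $C$ and a longest $u$--$v$ path in $A$, then invoke nontraceability and $|V(G)|\le 18$.

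The gap is in the passage from $\tau_A(u,v)\le 3$ to $|V(A)|\le 3$. Your plan is to enumerate the connected claw-free graphs realising $A$, but you only list the order-four ones; nothing you have proved so far bounds $|V(A)|$ by four, so the enumeration is open-ended. More seriously, your intended contradiction---build a Hamiltonian path of $A$ from an attach vertex, append a full traversal of $C$, then ``incorporate any remaining components via further detours''---does not close. The path you produce has order $|V(C)|+|V(A)|$, and nontraceability only tells you there is at least one vertex left over; it does not by itself yield a Hamiltonian path of $G$. Absorbing the other components of $G-V(C)$ into such a path is exactly the hard global problem the lemma is meant to let you avoid. You identify this obstacle yourself, and the paw example you raise is a genuine one.

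The paper sidesteps all of this with a purely local argument. Having fixed a maximum path $L:u,w,v$ in $A$ with $u,v$ attaching at distinct cycle vertices $x,y$, it takes any $r\in A-V(L)$ adjacent to $L$ and derives a contradiction in two short cases. If $rw\in E(G)$, claw-freeness at $w$ forces $uv\in E(G)$; then $2$-connectedness gives some vertex of the component of $r$ in $A-V(L)$ adjacent to $C$, and the path through $r,w,u,v$ (or $r,w,v,u$) witnesses $s\ge 4$. If $ru\in E(G)$, claw-freeness at $u$ forces $wx\in E(G)$, and a second application of $2$-connectedness and claw-freeness at $y$ again contradicts maximality of $s$. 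No global Hamiltonian-path construction and no graph enumeration is needed; the whole thing stays inside $A$ and its two attach points. Replacing your enumeration strategy with this local claw argument would complete the proof.
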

\begin{proof}
\begin{enumerate}
\item Let $A$ be a component of $G-V(C)$.
Since $G$ is $2$-connected there exist vertices
$u,v \in V(A)$  ($u=v$ is allowed)
adjacent to distinct vertices $x$ and $y$, respectively,
in $C$.
Let $\displaystyle s = \max \tau_A (u,v)$,
where the maximum is taken over vertices 
$u, v \in V(A)$ such that  $u, v$ are adjacent to
distinct vertices of $C$. 
That $s < 4$ follows from Lemma~\ref{bypass}, since $G$
is nontraceable and $|V(G)| \leq 18$.

Suppose that $s=3$. 
Let $L$ be a path of order $3$ in $A$
with interior vertex $w$ and 
endvertices $u$, $v$ adjacent to
vertices $x$, $y$ respectively on $C$.
Suppose that $V(A-V(L)) \ne \emptyset$.
Let $K$ be a component of $A-V(L)$ and
let $r \in K$ be adjacent to
a vertex of $L$.
\begin{enumerate}
\item[(a)] Suppose that $rw \in E(G)$.
Note that then $ru \notin E(G)$ and
$rv \notin E(G)$ since otherwise $s=3$ would
be contradicted. Since $G$ is claw-free 
we have $uv \in E(G)$. Since $G$ is $2$-connected,
some vertex of $K$ is adjacent to a vertex of $C$.
But then we get a contradiction with $s=3$.
\item[(b)] Suppose that $ru \in E(G)$.
Since $G$ is claw-free, either $rw \in E(G)$,
or $rx \in E(G)$, or $wx \in E(G)$. 
But $rw \notin E(G)$ and $rx \notin E(G)$ since
otherwise we have a contradiction with $s=3$.
So $wx \in E(G)$. Then no vertex of $K$ is adjacent
to $w$ or $v$, since this would contradict $s=3$.
Since $G$ is $2$-connected, some vertex $q \in V(K)$
is adjacent to a vertex of $C$, and this vertex 
must be $y$ or we get a contradiction with $s=3$.
But then there is a claw with centre y,
because $qv \notin E(G)$ 
($qv \in E(G)$ contradicts $s=3$),
and neither $q$ nor $v$ is adjacent to $y^{-}$
or $y^{+}$ since $C$ is a circumference cycle.
\end{enumerate}
Hence no vertex of $K$ is adjacent to $L$,
and it follows that $V(A-V(L)) = \emptyset$.
Similarly,  if $s= 2$ or $s=1$ we get $V(A - V(L)) = \emptyset$.

\item Let $u_1$, $u_2$ be
vertices in different components of $G-V(C)$,
and suppose that $x \in V(C)$ is adjacent to 
$u_1$ and $u_2$. Then $u_1x^+ \notin E(G)$ and
$u_2x^+ \notin E(G)$, otherwise we get a cycle
longer than $C$. Also, $u_1u_2 \notin E(G)$,
since $u_1$ and $u_2$ belong to different components
of $G-V(C)$. Hence $G\left\langle \{x, x^+, u_1,u_2\}\right \rangle$
is an induced claw, which contradicts the fact that $G$
is claw-free. \hspace*{\fill} \qd
\end{enumerate}
\end{proof}

Since our concern here is finding the
\textit{smallest} 2-connected, claw-free, nontraceable
graphs $G$ it follows from Lemma~\ref{chords}
that we may suppose, without loss of generality,
that the components of $G - V(C)$ are single vertices,
which we call \textit{bridge vertices}.\index{bridge vertices}
We may also suppose, without loss of 
generality, that each bridge vertex is adjacent
to exactly two vertices on $C$,
which are called \textit{attachment} vertices 
\index{attachment vertices}
of $C$. For example, the vertices
$a$, $b$, $c$ in Figure~\ref{subcase1} on
page~\pageref{subcase1} are bridge vertices,
and the vertices $w_1$, $v_3$, $w_3$, $w_2$, $v_2$
and $v_1$ are attachment vertices.
If $u$ and $v$ are 
consecutive attachment
vertices of $C$, then the path
$C[u, v]$, and their relatives 
described previously, are called 
\textit{attachment paths}. \index{attachment path}
If we want the
orientation of an attachment path to be determined
by the orientation of a path in which it
lies, and not by $C$, we simply denote
the attachment path by $[u, v]$, or $(u, v]$
and so on.
Attachment paths whose closures (formed by replacing round
brackets with square brackets)
have an attachment vertex \index{adjacent attachment paths}
in common are said to be adjacent, and
vertices of $C$ which are not attachment vertices
are called \textit{extra} vertices.\index{extra vertices}
 
Let $k = |V(G - V(C))|$. 
A \textit{skeleton path} $S$ of $G$ \index{skeleton path}
with respect to $C$ is 
a path formed from an alternating sequence of attachment paths
and bridge vertices,
in such a way that all the attachment vertices and
bridge vertices belong to $V(S)$,
as follows:
\[
(u_1, v_1],b_1,[u_2, v_2],b_2,[u_3, v_3],\cdots, b_k,[u_{k+1}, v_{k+1}). 
\]
Note that, if $k > 1$,  $u_1$ and $v_{k+1}$ are always
interior vertices of $S$, since all the attachment
vertices of $C$ are included in $S$ . 
Therefore, in the above description, $u_1$ and
$v_{k+1}$ will each have two labels.
Since there are $k$ bridge vertices it follows
that  $k-1$ attachment
paths will be omitted from any skeleton path $S$.  If
$G$ is nontraceable, then at least one of these omitted
attachment paths must contain extra vertices of $C$,
otherwise $S$
will be a hamiltonian path in $G$. The next
lemma gives the least number of extra vertices
that must be contained in the omitted attachment paths.
Note that, because of the way in which
$S$ is constructed, omitted attachment paths of $C$ cannot
be adjacent, since $S$ begins and ends with attachment
paths.

\begin{lemma}\label{skeleton}
Let $G$ be a $2$-connected, 
claw-free, nontraceable graph
and $C$ be a circumference cycle of $G$.
Suppose that all the components
of $G - V(C)$ are trivial, and each
component is adjacent to exactly
two vertices of $C$. Let $P$ be a
skeleton path of $G$ w.r.t.\ $C$.
Suppose that $P$ omits an attachment
path $C(u, v)$ with at most two extra vertices. 
Then $G$ has a path with vertex
set $V(P) \cup V(C(u, v))$.
\end{lemma}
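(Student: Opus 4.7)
The plan is to do a case analysis on $t := |V(C(u,v))| \in \{0,1,2\}$. Let $b$ denote the bridge vertex that $P$ uses in place of the omitted attachment path; locally, $P$ has the form $\ldots, u^{-}, u, b, v, v^{+}, \ldots$, where $u^{-}$ is the predecessor of $u$ on $C$ (lying in the attachment path of $P$ ending at $u$) and $v^{+}$ is the successor of $v$ on $C$ (lying in the next attachment path of $P$ beginning at $v$). Write $x = u^{+}$ and, if $t = 2$, $y = v^{-}$ for the extra vertices on $C(u,v)$. By the standing simplification each bridge vertex is adjacent to exactly two vertices of $C$, namely $u$ and $v$, so $bx, by \notin E(G)$. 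The case $t = 0$ is trivial: $P$ itself is the required path.

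For $t \geq 1$ the essential tool is the claw-free hypothesis. The three neighbours $u^{-}, b, x$ of $u$ are pairwise distinct, and since $bx \notin E(G)$, avoiding an induced $K_{1,3}$ centred at $u$ forces $u^{-}b \in E(G)$ or $u^{-}x \in E(G)$. Symmetrically, when $t = 2$, the neighbours $v^{+}, b, y$ of $v$ together with $by \notin E(G)$ force $v^{+}b \in E(G)$ or $v^{+}y \in E(G)$.

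When $t = 1$ each alternative yields an immediate rerouting: if $u^{-}b \in E(G)$, replace the subpath $u^{-}, u, b, v$ of $P$ by $u^{-}, b, u, x, v$; if instead $u^{-}x \in E(G)$, replace $u^{-}, u$ by $u^{-}, x, u$ and continue through $b$ and $v$ unchanged. In both situations the resulting path has vertex set $V(P) \cup \{x\}$. When $t = 2$ the two claw disjunctions combine into three effective subcases, each admitting an explicit substitution inside the segment from $u^{-}$ to $v^{+}$: (a) if $u^{-}b \in E(G)$, use $u^{-}, b, u, x, y, v, v^{+}$; (b) if $v^{+}b \in E(G)$, use $u^{-}, u, x, y, v, b, v^{+}$; (c) otherwise both $u^{-}x$ and $v^{+}y$ are edges of $G$, and one uses $u^{-}, x, u, b, v, y, v^{+}$. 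In each substitution every listed consecutive pair is an edge of $G$, either because it already lies on $C$ or at $b$, or by the case hypothesis; and the substituted segment shares only $u^{-}$ and $v^{+}$ with the rest of $P$, so the modified walk is a genuine path with vertex set $V(P) \cup V(C(u,v))$.

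The main point requiring care is subcase (c), where neither attachment vertex has a direct edge to $b$ in the desired direction; this is the one subcase in which $b$ must be placed in the interior of the omitted arc, and both chord edges $u^{-}x$ and $v^{+}y$ are needed simultaneously. Once one verifies that the claw-free conditions at $u$ and at $v$ cannot both fail to deliver a $b$-edge without forcing (c), the remainder of the argument is just routine edge-checking on $C$ and at $b$.
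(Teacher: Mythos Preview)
Your argument rests on an incorrect picture of the skeleton path near the omitted arc. You assume that $P$ contains a segment $u^{-},u,b,v,v^{+}$ with a single bridge vertex $b$ adjacent to both $u$ and $v$. But $u$ and $v$ are merely \emph{consecutive attachment vertices} on $C$; they need not be the two attachment vertices of any one bridge vertex. In the paper's own examples (e.g.\ the skeleton path $T_{21}$ in Case~2(ii), which omits $(v_2,v_3)$) the vertices $v_2$ and $v_3$ are attached to different bridge vertices $b$ and $c$, and in $T_{21}$ one must traverse the entire segment $b,[w_2,w_3],c$ to get from $v_2$ to $v_3$. So there is no short segment from $u^{-}$ to $v^{+}$ to substitute into, and your case~(c) rewrite $u^{-},x,u,b,v,y,v^{+}$ is not available. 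A second symptom of the same misreading: since you yourself note that $b$ has only $u$ and $v$ as neighbours on $C$, the alternatives $u^{-}b\in E(G)$ and $v^{+}b\in E(G)$ in your disjunctions are impossible, so cases~(a) and~(b) are vacuous; this should have signalled that the set-up was off.

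The paper's proof avoids this by treating the two ends \emph{independently}. At $u$ one uses the bridge vertex adjacent to $u$ (whatever it is) together with $u^{-},u^{+}$ to force, via claw-freeness and the circumference of $C$, the chord $u^{-}u^{+}\in E(G)$; then one simply replaces the edge $u^{-}u$ in $P$ by $u^{-},u^{+},u$. The analogous insertion at $v$ handles the second extra vertex when $t=2$. These two local moves do not interact, so no assumption about what lies between $u$ and $v$ in $P$ is needed. The paper also covers the possibility that $u^{-}u\notin E(P)$, i.e.\ that $u$ is an endvertex of $P$, in which case one appends $u^{+}$ instead; your version omits this case as well.
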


\begin{proof}
If $u^-u \in E(P)$, let $P^*$ be the path obtained
from $P$ by replacing $u^-,u$ with $u^-,u^+,u$.
If $u^-u \notin E(P)$, then $u$ is an endvertex
of $P$. In this case, let $P^*$ be the path 
$P,u^+$.
If $V(C(u, v)) = \{ u^+ \}$ we are done. If not,
the desired path can be obtained from $P^*$ by
replacing $v,v^+$ with $v,v^-,v^+$ if
$vv^+ \in E(P^*)$, or adding $v,v^-$ if
$vv^+ \notin E(P^*)$. \hspace*{\fill}\qd
\end{proof}

\begin{corollary}\label{skel}
Suppose that $G$, $C$ and $P$ are as in
Lemma~\ref{skeleton}. Then
\begin{enumerate}
\item If no two attachment 
paths of $C$ which are omitted
by $P$ are adjacent to the same 
attachment path of $C$, then  at 
least one of the 
attachment paths omitted by $P$
contains at least three extra vertices.

\item  If two attachment 
paths of $C$ which are omitted
by $P$ are adjacent to the same 
attachment path of $C$, then either 
at least one of the 
attachment paths omitted by $P$
contains at least three extra vertices,
or at least two attachment paths omitted by $P$
each contains at least two extra vertices.
\end{enumerate}
\end{corollary}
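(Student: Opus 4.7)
I would prove both parts by contrapositive: assume the conclusion fails and show that $G$ is traceable, contradicting the nontraceability standing in the setup of Lemma~\ref{skeleton}.

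The key preliminary observation is a symmetric companion to Lemma~\ref{skeleton}. If the omitted attachment path $C(u,v)$ has exactly one extra vertex $z=u^+=v^-$, then claw-freeness at the attachment vertex $v$ supplies the chord $v^-v^+ = zv^+ \in E(G)$, so $z$ may equally well be absorbed at the $v$-end: replace the edge $vv^+$ of $P$ with the subpath $v,z,v^+$ (or append $v,z$ to $P$ if $v$ is an endvertex of $P$). Consequently a one-extra omitted attachment path can be absorbed by a local edit at \emph{either} of its two adjacent used attachment paths --- our choice --- whereas a two-extra omitted attachment path requires forced edits at \emph{both} of its adjacent used attachment paths. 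Every such edit rewrites a single edge of $P$ lying inside a used attachment path adjacent to the omitted path being absorbed, so a batch of absorptions can be carried out simultaneously as long as the edges they rewrite are pairwise distinct.

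For Part 1, assume for contradiction that every omitted attachment path has at most two extras. The hypothesis of Part 1 forbids any used attachment path from being adjacent to two different omitted attachment paths, so the edges rewritten by distinct absorptions automatically lie in distinct used attachment paths and are pairwise distinct. The simultaneous absorptions therefore define a path whose vertex set is $V(P)$ together with the extras of every omitted attachment path, i.e.\ all of $V(G)$. This hamiltonian path contradicts nontraceability, so some omitted attachment path must contain at least three extras.

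For Part 2, assume for contradiction that no omitted attachment path has three or more extras and that at most one of them, call it $O^\ast$, has two extras; every other omitted path has at most one extra and is therefore steerable. A conflict between simultaneous absorptions can occur only at a used attachment path $A=[y_1,y_2]$ of order exactly two that is flanked on both sides by modifying omitted attachment paths, because only then do two intended edits compete for the single edge $y_1y_2$ of $P$. At any such $A$, at most one of its two flanking omitted paths is $O^\ast$ (and so has its $A$-side edit forced), while the other is one-extra and can be steered to its opposite end. Performing these steering choices around $C$ eliminates every conflict, and the simultaneous absorptions yield a hamiltonian path, contradicting nontraceability.

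The main obstacle is verifying that the ``steer-away-from-$A$'' choices can be made globally consistent: a one-extra omitted path whose two adjacent used attachment paths are both of order two and both already constrained from their far sides would have nowhere to go. I would resolve this by tracing the forced-constraint cascade outward from $O^\ast$ in both directions along alternating chains of order-two used attachment paths and one-extra omitted paths, and showing that such a chain cannot close around $C$. The skeleton-path counting --- $k+1$ used attachment paths against only $k-1$ omitted attachment paths among the $2k$ attachment paths of $C$ --- forces at least two used-used transitions on $C$, occurring at the special attachment vertices $u_1$ and $v_{k+1}$ where the two half-open attachment paths at the ends of the skeleton path meet another used attachment path. At each such transition the cascade runs out of omitted paths to propagate through and so terminates before wrapping around $C$, giving the required global consistency.
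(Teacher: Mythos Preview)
Your approach---argue by contrapositive and absorb every short omitted attachment path into $P$ via the local edits of Lemma~\ref{skeleton}---is exactly the paper's, and for Part~1 your argument and the paper's coincide.

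For Part~2 the paper simply asserts that ``we can again repeat the construction in Lemma~\ref{skeleton} for each omitted attachment path,'' without discussing whether the individual edits might collide. You go further and supply that justification: you correctly identify that a conflict can only arise at a used attachment path of order two flanked on both sides by omitted paths, and you resolve it by steering the (at most one-extra) omitted paths away from the unique two-extra $O^\ast$, with the cascade terminating at the two used--used adjacencies guaranteed by the $(k{+}1)$-versus-$(k{-}1)$ count. That termination argument is sound, so your proof is the paper's proof with the Part~2 bookkeeping filled in.
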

\begin{proof}
\begin{enumerate}
\item Suppose that no two attachment paths
omitted by $S$ are adjacent to the same
attachment path. Then at least one of the 
omitted attachment paths
contains at least three extra vertices, 
since otherwise the
construction in Lemma~\ref{skeleton} can be 
repeated for each
omitted attachment path to give a
hamiltonian path. 
    
\item Suppose that no omitted attachment
path contains more than two extra vertices
and no pair of omitted attachment paths contain
in total more than three extra vertices.
Then we can again repeat the construction
in Lemma~\ref{skeleton} for each
omitted attachment path to get a hamiltonian
path in $G$. \hspace*{\fill}\qd
\end{enumerate} \end{proof}
\begin{figure}[H]
\begin{center}
\epsfig{file=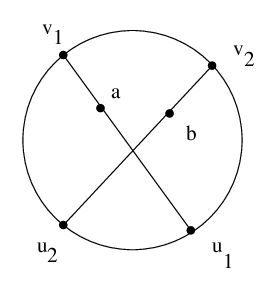}\\
\caption{\label{cross-attach} A pair of crossed attachment vertices.}
\end{center}
\end{figure}
Let $\{u_1, v_1 \}$, $\{u_2, v_2 \}$
be pairs of attachment vertices of $C$
adjacent to 
%components $A$, $B$ of $G-V(C)$ respectively.
bridge vertices $a$, $b$ respectively.
Then we say the pairs $\{u_1, v_1 \}$, $\{u_2, v_2 \}$
are pairs of \textit{crossed} attachment vertices of $C$
\index{crossed attachment vertices}
if (with some orientation of $C$) 
$u_2 \in V(C(u_1, v_1))$ 
and $v_2 \in V(\overline{C}(u_1, v_1))$.

The existence of a crossed pair of attachment vertices
on $C$ gives a lower bound on $c(G)$, as shown
in the next lemma:

\begin{lemma}\label{crossed}
Let $G$ be a claw-free, $2$-connected, nontraceable
graph. Suppose that $G$ has a circumference
cycle $C$ containing a pair of crossed
attachment vertices.
Then $c(G) \geq 14$.
\end{lemma}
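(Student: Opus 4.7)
By the crossed hypothesis, orient $C$ so that the four attachment vertices appear in the cyclic order $u_1, u_2, v_1, v_2$; write $\alpha_1, \alpha_2, \alpha_3, \alpha_4$ for the (open) arcs $C(u_1, u_2), C(u_2, v_1), C(v_1, v_2), C(v_2, u_1)$, and $a_i := |V(\alpha_i)|$, so that $|V(C)| = 4 + a_1 + a_2 + a_3 + a_4$. My plan is to prove that $a_i \geq 3$ for every $i$; this already yields $c(G) = |V(C)| \geq 16$, which is stronger than the claimed bound $14$.

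The key step is to exhibit, for each $i \in \{1,2,3,4\}$, a skeleton path $S_i$ of $G$ with respect to $C$ whose only omitted attachment path is the one whose interior is $\alpha_i$. For $i = 3$, for instance, I would take
\[
S_3 \colon u_1^-, \dots, v_2^+, v_2, b, u_2, u_2^+, \dots, v_1^-, v_1, a, u_1, u_1^+, \dots, u_2^-,
\]
which, in the skeleton-path notation, has the form $(u_1, v_2], b, [u_2, v_1], a, [u_1, u_2)$. The crossed structure is precisely what makes this work: because the two chords $u_1$--$a$--$v_1$ and $u_2$--$b$--$v_2$ interleave on $C$, the two bridges can be traversed in sequence so that the three surviving attachment paths together with all four attachment vertices thread into a single simple path. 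The analogous skeleton paths $S_1, S_2, S_4$ are obtained by applying the same recipe after rotating the roles of the arcs and the bridges, and in each case all four attachment vertices lie in $V(S_i)$ while exactly the arc $\alpha_i$ is omitted.

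Once the four skeleton paths are in hand the argument closes immediately. Each $S_i$ omits only a single attachment path, so the hypothesis of Corollary~\ref{skel}(1) (``no two omitted paths are adjacent to a common attachment path'') is satisfied vacuously. The corollary then forces the omitted arc to contain at least three extra vertices, i.e.\ $a_i \geq 3$. Summing over $i$ gives $|V(C)| \geq 4 + 4\cdot 3 = 16 \geq 14$, as required.

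The main obstacle is purely bookkeeping: for each of the four choices of omitted arc, one has to write down the proposed path and check that no attachment vertex is revisited and that the bridge--attachment incidences line up correctly. The crossed hypothesis is exactly what guarantees that this bookkeeping can be carried out in all four cases. A little extra care is needed in the degenerate situations in which some $a_i$ vanishes, but in those cases Lemma~\ref{bypass} already forces the neighbouring $a_j$'s to be large enough that the same skeleton arguments, applied to the remaining arcs, still deliver $|V(C)| \geq 14$.
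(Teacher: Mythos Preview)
Your skeleton-path argument tacitly assumes that $a$ and $b$ are the \emph{only} bridge vertices of $G-V(C)$.  The lemma does not assume this, and in fact its sole application in the thesis is in Case~4 of Theorem~\ref{greater-18}, where $C$ carries at least five bridge vertices.  With additional bridges present, your paths $S_i$ are no longer skeleton paths in the paper's sense: they omit those other bridge vertices and their attachment vertices, so Corollary~\ref{skel} does not apply.  Put differently, even if you absorbed the omitted arc $\alpha_i$ into $S_i$ via Lemma~\ref{skeleton}, the resulting path would still miss the extra bridges and therefore would not be hamiltonian; no contradiction with nontraceability follows, and you cannot conclude $a_i\ge 3$.

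The paper sidesteps this difficulty by working with \emph{cycles} rather than paths.  It forms the two cycles $a,C[v_1,v_2],b,\overline{C}[u_2,u_1],a$ and $b,C[u_2,v_1],a,\overline{C}[u_1,v_2],b$ through the crossed pair, compares them to $|V(C)|$, and then uses the claw-free ``ear'' edges $x^-x^+$ at each attachment vertex to lengthen these cycles further.  This argument uses only the maximality of $C$ and the two crossed bridges, so it is completely insensitive to how many other bridges exist.  Your approach is exactly right in the two-bridge situation---indeed that is Case~1 of Theorem~\ref{greater-18}, where it delivers the stronger bound $c(G)\ge 16$---but it does not prove Lemma~\ref{crossed} in the generality in which it is stated and used.
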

\begin{proof}
Suppose that $C$ has a clockwise
orientation and we have a pair of 
crossed attachment vertices,
as shown in Figure~\ref{cross-attach}.
The pairs of attachment paths
$C[v_1, v_2]$, $C[u_1, u_2]$ and
$C[u_2, v_1]$, $C[v_2, u_1]$ must
each contain in total at least two internal
vertices, otherwise either
$a,C[v_1, v_2],b,\overline{C}[u_2, u_1],a$ or
$b,C[u_2, v_1],a,\overline{C}[u_1, v_2],b$ gives a
longer cycle than $C$. 
Since $G$ is claw-free, the vertices
$v_1^-$ and $v_1^+$ are adjacent,
and similarly for the other attachment
vertices. It is then easy to use these
extra edges to appropriately extend either
the cycle $a,C[v_1, v_2],b,\overline{C}[u_2, u_1],a$ or
the cycle $b,C[u_2, v_1],a,\overline{C}[u_1, v_2],b$ 
to show that, in order to
avoid a cycle longer than $C$, we must have
either:
\begin{enumerate}
\item At least three internal vertices in each 
of the four attachment paths $C[v_1, v_2]$,
$C[v_2, u_1]$, $C[u_1, u_2]$,
$C[u_2, v_1]$, which gives
$c(G) \ge 16$, or
\item At least four internal vertices 
in each attachment path of at least one  pair
of adjacent paths. Then, by
Lemma~\ref{bypass}, the remaining two attachment paths
contains in total at least another
two internal vertices, giving $c(G) \ge 14$. \hspace*{\fill}\qd
\end{enumerate} \end{proof}

We can now prove our main result
of this section:
\begin{theorem}\label{greater-18}
Let $G$ be a $2$-connected, claw-free,
nontraceable graph.  Then $|V(G)| \geq 18$.
If $|V(G)| =18$, and $|E(G)|$ has
the least value for all such graphs $G$,
then $G$ is isomorphic to either graph $A$ or $B$
in Figure~\ref{clawfree1}.
\begin{figure}[H]
\begin{center}
\epsfig{file=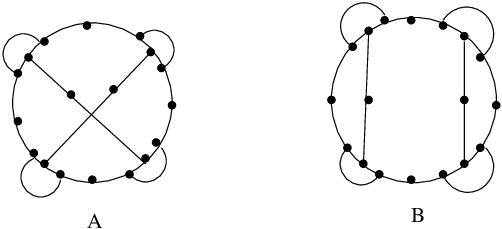}
\end{center}
\caption{\label{clawfree1} Graphs for Theorem~\ref{greater-18}}
\end{figure}
\end{theorem}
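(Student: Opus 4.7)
The plan is to consider a circumference cycle $C$ of $G$ and exploit the structural lemmas just proved. By Lemma~\ref{chords}, for the purpose of establishing a lower bound on $|V(G)|$ (and minimum size), we may assume each component of $G-V(C)$ is a single bridge vertex adjacent to exactly two attachment vertices of $C$. Let $k$ denote the number of bridges, and let $e_1,\ldots ,e_{2k}$ be the numbers of extra vertices in the $2k$ attachment paths, with total $e=\sum e_i$. Then $|V(G)|=|V(C)|+k=3k+e$. The case $k\le 1$ cannot occur because $k=0$ makes $G$ Hamiltonian and $k=1$ clearly allows a Hamilton path (walk around $C$ with a detour through the bridge). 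For $k\ge 6$ the bound $|V(G)|\ge 3k\ge 18$ is immediate, so the argument splits into $k\in\{2,3,4,5\}$.

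For the core case $k=2$ I would handle the two cyclic configurations of attachment vertices, namely the non-crossed configuration $u_1,v_1,u_2,v_2$ and the crossed configuration $u_1,u_2,v_1,v_2$. In each configuration I would enumerate skeleton paths and check explicitly that for every one of the four attachment paths $P_i$ there exists a skeleton path whose only omitted attachment path is $P_i$; for example, in the non-crossed case, walking from an extra vertex of $P_2$ through $v_1,a,u_1$, then along $P_4$ to $v_2$, then through $b,u_2$ into $P_3$ omits only $P_1$, and cyclic analogues omit $P_2,P_3,P_4$. Since exactly one attachment path is omitted, Corollary~\ref{skel}(1) applies vacuously and forces $e_i\ge 3$ for each $i$. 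Hence $e\ge 12$ and $|V(G)|\ge 6+12=18$.

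For $k=3,4,5$ I would split further according to the crossing pattern of the bridge pairs. If every pair of bridges is non-crossed, the cyclic order places all $k$ pairs in pairwise-disjoint short arcs; Lemma~\ref{bypass} then supplies $\ge 3$ extras inside each short arc, so $e\ge 3k$ and $|V(G)|\ge 6k\ge 18$. If some pair of bridges is crossed, Lemma~\ref{crossed} gives $c(G)\ge 14$ and hence $|V(G)|\ge 14+k$, which is $\ge 18$ as soon as $k\ge 4$. The delicate sub-case is $k=3$ with a crossed pair; here Lemma~\ref{crossed} by itself only gives $|V(G)|\ge 17$, so I would carry the two alternatives inside its proof (either $c(G)\ge 16$, giving $|V(G)|\ge 19$, or one adjacent pair of the four arcs has $\ge 4$ internal vertices each with $\ge 2$ more in the other pair) and combine them with the bypass inequalities for all three bridges, in particular the short-arc bound $e_i\ge 3$ for the third, non-crossed bridge. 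A linear-programming style check on these inequalities yields $e\ge 9$ and hence $|V(G)|\ge 18$. This interaction is the main obstacle.

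For the equality characterisation, the inequalities above saturate only when $k=2$ and $e_i=3$ for each of the four attachment paths, so $|V(C)|=16$. The claw-free condition forces the chord $x^-x^+$ at every attachment vertex $x$ (each such $x$ has three neighbours $x^-,x^+$ and the bridge vertex, and the bridge neighbour cannot be adjacent to an extra of $C$), producing four triangles $\{x^-,x,x^+\}$ around the attachment vertices. Minimality of $|E(G)|$ rules out any further chords of $C$ or edges incident to bridges. Treating these four triangles as super-vertices, the non-crossed configuration yields a super-graph isomorphic to $K_4$ (four triangles joined into a 4-cycle by the middle extras 2,6,10,14 together with the two bridges as diagonals), while the crossed configuration yields the super-graph $C_2\times K_2$ (the bridges now play the role of the doubled edges). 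These are precisely graph $A$ and graph $B$ of Figure~\ref{clawfree1}, completing the characterisation.
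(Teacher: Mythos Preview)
Your $k=2$ analysis is correct and matches the paper's. The trouble is entirely in $k\ge 3$, where there are two genuine gaps.

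First, the structural claim for the non-crossed case is false. ``Every pair of bridges is non-crossed'' only says the attachment pairs form a non-crossing (laminar) family on the circle; it does \emph{not} force the $k$ short arcs to be pairwise disjoint. For $k=3$ the nested chord diagram (cyclic order $v_1,v_2,v_3,w_3,w_2,w_1$ with bridges $\{v_i,w_i\}$) is pairwise non-crossed, yet the middle bridge $\{v_2,w_2\}$ has \emph{no} arc free of other attachment vertices, so Lemma~\ref{bypass} gives it no private block of three extras. This is exactly the paper's Sub-case~2(ii), and it requires the skeleton-path machinery of Corollary~\ref{skel}, not just Lemma~\ref{bypass}, to dispatch.

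Second, even where your counting goes through, it yields only $|V(G)|\ge 18$ for $k\ge 3$ (and $|V(G)|\ge 3k=18$ for $k=6$), which is too weak for the equality characterisation: you must show $|V(G)|\ge 19$ whenever $k\ge 3$ so that equality forces $k=2$. The paper does this by exhaustively running Corollary~\ref{skel} over all five chord diagrams for $k=3$ and all eighteen for $k=4$ (grouped into three families), obtaining $\ge 19$ in every case; for $k\ge 5$ it combines Lemma~\ref{crossed} with Lemma~\ref{bypass} to get $\ge 19$ as well. Your crossed/non-crossed dichotomy cannot replace this. Concretely, the three-disjoint $k=3$ diagram with exactly three extras in each short arc and none elsewhere---your would-be saturating configuration---turns out to be Hamiltonian once the mandatory claw-free chords $x^-x^+$ are added (the paper exhibits the Hamilton cycle explicitly), so this $18$-vertex graph is not nontraceable at all; but nothing in your argument detects that. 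Similarly, the ``linear-programming style check'' for the $k=3$ crossed sub-case is not a proof: the two alternatives inside Lemma~\ref{crossed} interact with the third bridge in several inequivalent ways depending on where its attachment vertices sit among the four arcs, and you are back to a chord-diagram case split.
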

\begin{proof}
Suppose that $|V(G)| \leq 18$.  Let $C$ be a 
circumference cycle in 
$G$.
By Lemma~\ref{chords} we may suppose
that the components of $G - V(C)$ 
are all bridge vertices of $C$, and their attachment
vertices on $C$ are all distinct. We may also suppose
that each bridge vertex is adjacent to
exactly two vertices on $C$. Since $G$ is
nontraceable, $C$ has more than one bridge
vertex. We consider four cases, depending on
the number of bridge vertices.

\noindent \textbf{Case 1.} $C$ has exactly two bridge vertices.\\
There are two arrangements of 
attachment vertices, crossed or not
crossed, shown in Figure~\ref{case1}.
\begin{figure}[H]
\begin{center}
\psfrag{a}[c][c]{$A$}
\psfrag{b}[c][c]{$B$}
\epsfig{file=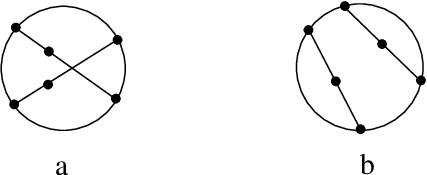}
\end{center}
\caption{\label{case1} Diagrams for Case 1}
\end{figure}
In this case we can always find a skeleton path satisfying
part (1) of Corollary~\ref{skel} which
omits any one of the four attachment paths 
and contains the remaining
three attachment paths of $C$. It follows from 
Corollary~\ref{skel} that each attachment path
of $C$ must contain at least three extra vertices, 
and therefore
$|V(G)| \geq 18$.

\noindent \textbf{Case 2.} $C$ has exactly three bridge vertices.\\
We have five sub-cases to consider, depending
on the arrangement of
attachment vertices on $C$.

\noindent \textbf{Sub-case (i)}\\
See Figure~\ref{subcase1}. 
By Lemma~\ref{bypass} each of the attachment paths
$[v_1, w_1]$, $[v_2, w_2]$ and $[v_3, w_3]$
contains at least
three extra vertices.\\ 
\hfill\begin{minipage}[b]{7.5cm}
\begin{figure}[H]
\begin{center}
\psfrag{a}[c][c]{$w_1$}
\psfrag{b}[c][c]{$v_3$}
\psfrag{c}[c][c]{$b$}
\psfrag{d}[c][c]{$w_3$}
\psfrag{e}[c][c]{$w_2$}
\psfrag{f}[c][c]{$c$}
\psfrag{g}[c][c]{$v_2$}
\psfrag{h}[c][c]{$v_1$}
\psfrag{i}[c][c]{$a$}
\epsfig{file=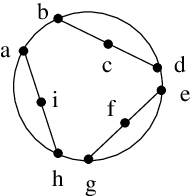}
\end{center}
\caption{\label{subcase1} Diagram for Case 2, Sub-case (i)}%Diagram for Case 2, Sub-case (i)}
\end{figure}
\end{minipage}
\hfill
\begin{minipage}[b]{6cm}
\begin{figure}[H]
\begin{center}
\epsfig{file=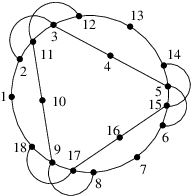}
\end{center}
\caption{\label{trace}Hamiltonian cycle} %Hamiltonian cycle for Case 2, Sub-case (i)}
\end{figure}
\end{minipage}\hfill\\
If we put three vertices
in each of these attachment paths, and add the necessary
edges to make the graph claw-free, we obtain
the graph shown in Figure~\ref{trace}, which has
a hamiltonian cycle $1,2,\ldots,18, 1$ indicated
in the figure.
Hence we must have at least two more vertices
in $G$, so $|V(G)| \geq 20$.\\

\noindent \textbf{Sub-case (ii)}\\
See Figure~\ref{subcase2}. By Lemma~\ref{bypass} the attachment paths
$[w_1, v_1]$ and $[v_3, w_3]$ must each
contain at least three extra vertices.
Consider the following two skeleton paths, both of
which satisfy part (1) of Corollary~\ref{skel}:\\
%\begin{align*}
\hspace*{\fill} $T_{21}:\ (v_1, w_1], a, [v_1, v_2], 
b, [w_2, w_3], c, [v_3, w_3)$ \hspace*{\fill}\\
\hspace*{\fill} $T_{22}:\ (w_1, v_1], a, 
[w_1, w_2], b, [v_2, v_3], c, [w_3, v_3)$ \hspace*{\fill}\\
%\end{align*}

Then $T_{21}$ omits attachment paths $(w_2, w_1)$ and $(v_2, v_3)$; 
$T_{22}$ omits attachment paths $(v_1, v_2)$ and $(w_3, w_2)$.
By Corollary~\ref{skel} at least one of the 
attachment paths in each of
these disjoint pairs of omitted attachment paths contains
at least three extra vertices. Hence $|V(G)| \geq 21$.
\begin{figure}[H]
\begin{center}
\psfrag{a}[c][c]{$w_1$}
\psfrag{i}[c][c]{$v_3$}
\psfrag{e}[c][c]{$b$}
\psfrag{g}[c][c]{$w_3$}
\psfrag{f}[c][c]{$w_2$}
\psfrag{h}[c][c]{$c$}
\psfrag{d}[c][c]{$v_2$}
\psfrag{c}[c][c]{$v_1$}
\psfrag{b}[c][c]{$a$}
\epsfig{file=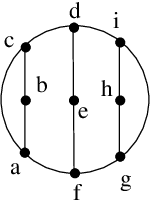}\\
\end{center}
\caption{\label{subcase2} Diagram for Case 2, Sub-case (ii)}
\end{figure}

\noindent \textbf{Sub-case (iii)}:
\begin{figure}[H]
\begin{center}
\psfrag{a}[c][c]{$w_1$}
\psfrag{i}[c][c]{$v_3$}
\psfrag{e}[c][c]{$b$}
\psfrag{g}[c][c]{$w_3$}
\psfrag{f}[c][c]{$w_2$}
\psfrag{h}[c][c]{$c$}
\psfrag{d}[c][c]{$v_2$}
\psfrag{c}[c][c]{$v_1$}
\psfrag{b}[c][c]{$a$}
\epsfig{file=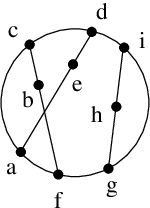}\\
\end{center}
\caption{\label{subcase3} Diagram for Case 2, Sub-case (iii)}
\end{figure}
Consider the following skeleton paths
which all satisfy part (1) of Corollary~\ref{skel}:\\
%\begin{align*}
\hspace*{\fill} $T_{31}:\  (w_1, v_1], a, 
[w_2, w_1], b, [v_2, v_3], c, [w_3, v_3)$ \hspace*{\fill}\\
\hspace*{\fill} $T_{32}:\ (v_2, v_1], a, 
[w_2, w_1], b, [v_2, v_3], c, [w_3, w_2)$ \hspace*{\fill}\\
\hspace*{\fill} $T_{33}:\  (v_1, v_2], b, 
[w_1, v_1], a, [w_2, w_3], c, [v_3, w_3)$ \hspace*{\fill}\\
%\end{align*}
Then $T_{31}$ omits attachment paths $(v_1, v_2)$ and $(w_3, w_2)$; 
$T_{32}$ omits attachment paths $(w_1, v_1)$ and $(v_3, w_3)$;
$T_{33}$ omits attachment paths $(v_2, v_3)$ and $(w_2, w_1)$.  
At least one of each of these pairs of attachment paths
omitted by $T_{31}$, $T_{32}$ and $T_{33}$  contains
at least three extra vertices. Whichever three of the
above attachment paths we choose, we can find a skeleton path
containing those three attachment paths and omitting
another pair, which must contain in total
at least three extra vertices.
Hence $|V(G)| \geq 21$.\\

\noindent \textbf{Sub-case (iv)}\\
Consider the following skeleton paths (see Figure~\ref{subcase4})
which all satisfy part (1) of Corollary~\ref{skel}:\\
%\begin{align*}
\hspace*{\fill} $T_{41}:\  (v_2, v_1], b, [w_3, w_2], 
c, [v_3, v_2], a, [w_1, w_2)$ \hspace*{\fill}\\
\hspace*{\fill} $T_{42}:\ (w_1, v_1], b, [w_3, v_3], 
c, [w_2, w_1], a, [v_2, v_3)$ \hspace*{\fill}\\
\hspace*{\fill} $T_{43}:\ (v_1, v_2], a, [w_1, v_1], 
b, [w_3, w_2], c, [v_3, w_3)$ \hspace*{\fill}\\
%\end{align*}
Then $T_{41}$ omits attachment paths $(w_1, v_1)$ and $(v_3, w_3)$; 
$T_{42}$ omits attachment paths $(v_1, v_2)$ and $(w_3, w_2)$;
$T_{43}$ omits attachment paths $(v_2, v_3)$ and $(w_2, w_1)$.  
\begin{figure}[H]
\begin{center}
\psfrag{a}[c][c]{$w_1$}
\psfrag{i}[c][c]{$v_3$}
\psfrag{e}[c][c]{$b$}
\psfrag{g}[c][c]{$w_3$}
\psfrag{f}[c][c]{$w_2$}
\psfrag{h}[c][c]{$c$}
\psfrag{d}[c][c]{$v_2$}
\psfrag{c}[c][c]{$v_1$}
\psfrag{b}[c][c]{$a$}
\epsfig{file=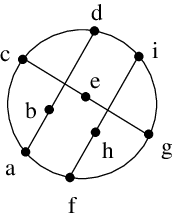}\\
\end{center}
\caption{\label{subcase4} Diagram for  Case 2, Sub-case (iv)}
\end{figure}
At least one of each of these pairs of attachment paths
omitted by $T_{41}$, $T_{42}$ and $T_{43}$ contains
at least three extra vertices. Applying the same argument 
as in sub-case (iii) 
to the above three skeleton paths and their omitted
attachment paths shows that $|V(G)| \geq 21$. 
\pagebreak

\noindent \textbf{Sub-case (v)}\\
\begin{figure}[H]
\begin{center}
\psfrag{a}[c][c]{$w_1$}
\psfrag{i}[c][c]{$v_3$}
\psfrag{e}[c][c]{$b$}
\psfrag{g}[c][c]{$w_3$}
\psfrag{f}[c][c]{$w_2$}
\psfrag{h}[c][c]{$c$}
\psfrag{d}[c][c]{$v_2$}
\psfrag{c}[c][c]{$v_1$}
\psfrag{b}[c][c]{$a$}
\epsfig{file=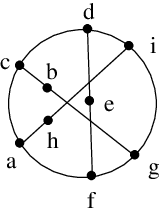}\\
\end{center}
\caption{\label{subcase5} Diagram for Case 2, Sub-case (v)}
\end{figure}
Consider the following skeleton paths
which all satisfy part (1) of 
Corollary~\ref{skel}:\\
%\begin{align*}
\hspace*{\fill} $T_{51}:\ (v_2, v_3], c, [w_1, w_2], 
b, [v_2, v_1], a, [w_3, w_2)$ \hspace*{\fill}\\
\hspace*{\fill} $T_{52}:\ (w_1,  v_1], a, [w_3, v_3], 
c, [w_1, w_2], b, [v_2, v_3)$ \hspace*{\fill}\\
\hspace*{\fill} $T_{53}:\ (v_3, v_2], b, [w_2, w_1], 
c, [v_3, w_3], a, [v_1, w_1)$ \hspace*{\fill}\\
%\end{align*}
Then $T_{51}$ omits attachment paths $(w_1, v_1)$ and $(v_3, w_3]$; 
$T_{52}$ omits attachment paths $(v_2, v_3)$ and $(w_2, w_1)$;
$T_{53}$ omits attachment paths $(v_1, v_2)$ and $(w_3, w_2)$.  
Again, applying the same argument as in sub-case (iii) to these
three skeleton paths and their omitted
attachment paths shows that
$|V(G)| \geq 21$.

\noindent \textbf{Case 3.} $C$ has exactly four bridge vertices.\\
There are 18 possible arrangements of
attachment vertices on $C$. (See Remark~\ref{chord}.)
It is
convenient to split these arrangements into 
three groups, depending on how effective
Lemma~\ref{bypass} is in forcing extra vertices
on $C$:

\noindent \textbf{Group 1.}\\
See Figure~\ref{group1}
If $u$, $v$ are attachment vertices on $C$ which are 
adjacent to the same bridge vertex, then it follows from
Lemma~\ref{bypass} that the attachment paths
$C[u,v]$ and $\overline{C}[u,v]$
each contains at least three internal vertices.
Hence, by inspecting each of the five arrangements 
in Figure~\ref{group1}, it is easy to
see that Lemma~\ref{bypass} forces at
least seven extra vertices on $C$,
giving $|V(G)| \geq 19$.
\begin{figure}[H]
\begin{center}
\epsfig{file=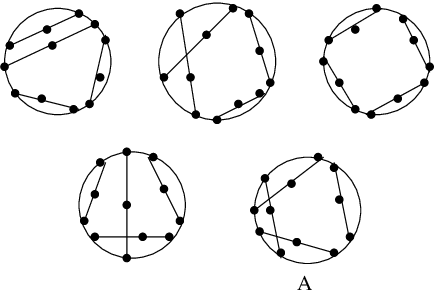}\\
\end{center}
\caption{\label{group1} Diagrams for Case 3, Group 1.}
\end{figure}
For example, consider the arrangement labelled
$A$ in Figure~\ref{group1}, where the vertices are 
labelled as shown in Figure~\ref{labelling1}.
\begin{figure}[H]
\begin{center}
\psfrag{A}[c][c]{$v_1$}
\psfrag{B}[c][c]{$v_2$}
\psfrag{C}[c][c]{$w_1$}
\psfrag{D}[c][c]{$v_3$}
\psfrag{E}[c][c]{$w_3$}
\psfrag{F}[c][c]{$v_4$}
\psfrag{G}[c][c]{$w_2$}
\psfrag{H}[c][c]{$w_4$}
\psfrag{I}[c][c]{$a$}
\psfrag{J}[c][c]{$b$}
\psfrag{K}[c][c]{$c$}
\psfrag{L}[c][c]{$d$}
\epsfig{file=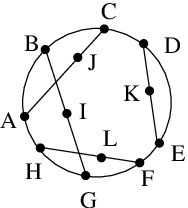}\\
\end{center}
\caption{\label{labelling1} Vertex labels for arrangement $A$ in 
Case 3, Group 1.}
\end{figure}
By Lemma~\ref{bypass}, the $v_1w_1$ path on $C$
which contains the vertex $v_2$ has at least 2
extra vertices, and the $v_4w_4$ path on $C$
containing vertex $w_2$ also has at least 2
extra vertices. The path $v_3w_3$ on $C$ containing
no attachment vertices has, by Lemma~\ref{bypass},
at least three extra vertices. Hence we get, in total,
at least 7 extra vertices for the arrangement $A$.

\newpage

\noindent \textbf{Group 2.}\\
\begin{figure}[H]
\begin{center}
\epsfig{file=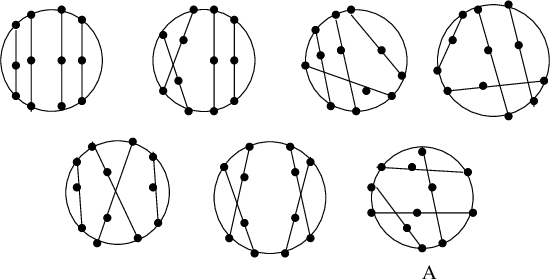}\\
\end{center}
\caption{\label{group2} Diagrams for  Case 3, Group 2.}
\end{figure}

In each of these seven arrangements we can
use a combination of Lemma~\ref{bypass} 
and a skeleton path argument to show that
$|V(G)| \geq 19$. For example, consider the 
arrangement labelled $A$ in Figure~\ref{group2},
and label its vertices as shown in 
Figure~\ref{labelling2}.
\begin{figure}[H]
\begin{center}
\psfrag{A}[c][c]{$w_4$}
\psfrag{B}[c][c]{$v_1$}
\psfrag{C}[c][c]{$v_2$}
\psfrag{D}[c][c]{$w_1$}
\psfrag{E}[c][c]{$v_3$}
\psfrag{F}[c][c]{$w_2$}
\psfrag{G}[c][c]{$v_4$}
\psfrag{H}[c][c]{$w_3$}
\psfrag{I}[c][c]{$c$}
\psfrag{J}[c][c]{$b$}
\psfrag{K}[c][c]{$a$}
\psfrag{L}[c][c]{$d$}
\epsfig{file=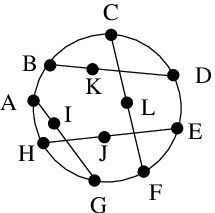}\\
\end{center}
\caption{\label{labelling2} Vertex labels for 
arrangement $A$ in Case 3, Group 2}
\end{figure}
By Lemma~\ref{bypass}, the $v_4w_4$ path on $C$
which contains the vertex $w_3$ has at least 2
extra vertices, and the $v_1w_1$ path on $C$
containing vertex $v_2$ also has at least 2
extra vertices. The skeleton path\\
\hspace*{\fill}
$(w_3,v_4],c,[w_4,w_3],b,[v_3,w_2],d,[v_2,w_1],a,[v_1,v_2)$
\hspace*{\fill}\\
omits attachment paths $(w_4,v_1)$, $(w_1,v_3)$
and $(w_2, v_4)$. By Corollary~\ref{skel}
at least one of these omitted attachment paths
contains at least three extra vertices. Thus
we get, in total, at 
least $7$ extra vertices, and, in this case,
$|V(G)| \ge 19$. Similarly for the other
arrangements in Figure~\ref{group2}.

\noindent \textbf{Group 3.}\\
\begin{figure}[H]
\begin{center}
\epsfig{file=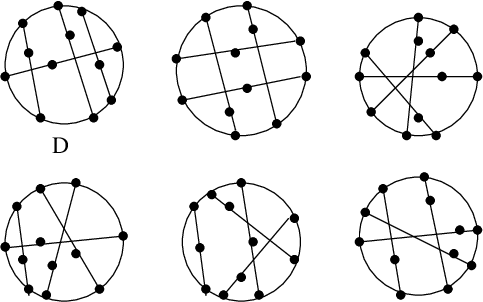}\\
\end{center}
\caption{\label{group3} Diagrams for Case 3, Group 3.}
\end{figure}
\begin{figure}[H]
\begin{center}
\psfrag{A}[c][c]{$v_1$}
\psfrag{B}[c][c]{$v_2$}
\psfrag{C}[c][c]{$v_3$}
\psfrag{D}[c][c]{$v_4$}
\psfrag{E}[c][c]{$w_1$}
\psfrag{F}[c][c]{$w_4$}
\psfrag{G}[c][c]{$w_3$}
\psfrag{H}[c][c]{$w_2$}
\psfrag{I}[c][c]{$d$}
\psfrag{J}[c][c]{$a$}
\psfrag{K}[c][c]{$b$}
\psfrag{L}[c][c]{$c$}
\epsfig{file=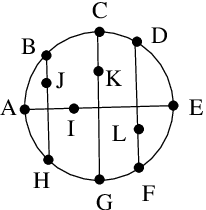}\\
\end{center}
\caption{\label{labelling3} Vertex labels for 
arrangement $D$ in Case 3, Group 3}
\end{figure}
It is easy
to verify that the
six arrangements shown in
Figure~\ref{group3} have the following 
property: Each one has
two disjoint sets $L$ and $M$,
each consisting of three attachment paths,
such that
\begin{enumerate}
\item There is a skeleton path
omitting $L$ and another skeleton path
omitting $M$; 
\item If $A \in L$ and $B \in M$ 
then there exists
a skeleton path containing both $A$ and $B$.
\end{enumerate}

We verify that the arrangement labelled $D$
in Figure~\ref{group3} has the above property. 
Label the vertices of $D$ as shown in Figure~\ref{labelling3}.
We first verify  part 1.\\
Choose $L$ and $M$ to be the sets of attachment
paths $\{(v_1,v_2), (v_3,v_4), (w_1,w_4)\}$
and $\{w_2,v_1), (v_2,v_3), (w_1,v_4)\}$ respectively.
Then the skeleton path\\
\hspace*{\fill}
$(w_2,v_1],d,[w_1,v_4],c,[w_4,w_3],b,[v_3,v_2],a,[w_2,w_3)$
\hspace*{\fill}\\
omits the attachment paths in $L$,
and the skeleton path\\
\hspace*{\fill}
$(w_4,w_1],d,[v_1,v_2],a,[w_2,w_3],b,[v_3,v_4],c,[w_4,w_3)$
\hspace*{\fill}\\
omits the attachment paths in $M$.\\
Now we verify part 2.  Let $P_1$,
$P_2$ and $P_3$ be the following skeleton paths:\\
\hspace*{\fill}
$P_1 : (w_1,w_4],c,[v_4,v_3],b,[w_3,w_2],a,[v_2,v_1],d,[w_1,v_4)$,
\hspace*{\fill}\\
\hspace*{\fill}
$P_2 : (v_1,w_2],a,[v_2,v_3],b,[w_3,w_4],c,[v_4,w_1],d,[v_1,v_2)$,
\hspace*{\fill}\\
\hspace*{\fill}
$P_3 : (v_3,v_4],c,[w_4,w_1],d,[v_1,w_2],a,[v_2,v_3],b,[w_3,w_4)$.
\hspace*{\fill}\\ 
Then the following five pairs of attachment paths from $L$
and $M$ are contained in skeleton path $P_3$:\\
\hspace*{\fill}
$(v_3,v_4) \in L$ and $(w_2,v_1) \in M$,
\hspace*{\fill}\\
\hspace*{\fill}
$(v_3,v_4) \in L$ and $(v_2,v_3) \in M$,
\hspace*{\fill}\\
\hspace*{\fill}
$(v_3,v_4) \in L$ and $(w_1,w_4) \in M$,
\hspace*{\fill}\\
\hspace*{\fill}
$(w_1,w_4) \in L$ and $(w_2,v_1) \in M$,
\hspace*{\fill}\\
\hspace*{\fill}
$(w_1,w_4) \in L$ and $(v_2,v_3) \in M$.
\hspace*{\fill}\\
The following two pairs of attachment 
paths are contained in skeleton path
$P_1$ : $(v_1,v_2) \in L$ and $(w_1,v_4) \in M$;
$(w_1,w_4) \in L$ and $(w_1,v_4) \in M$.
Finally, the following two pairs of
attachment paths are contained in skeleton
path $P_2$ ; $(v_1,v_2) \in L$ and $(w_2,v_1) \in M$;
$(v_1,v_2) \in L$ and $(v_2, v_3) \in M$.\\
%Similarly for the other arrangements in Figure~\ref{group3}.
The above property can be used to show that 
$|V(G)| \ge 19$ as follows:\\
By Corollary~\ref{skel},  $L$ has either an 
attachment path with at
least three extra vertices, or the attachment paths 
in $L$ contain
in total at least four extra vertices,
and the same holds for $M$. If each of $L$ and $M$ 
contains an attachment path with at least three extra
vertices, we can find another skeleton path
containing these two attachment paths, 
the omitted attachment paths of
which must contain in total 
at least three extra vertices.
Hence $|V(G)| \geq 21$. In the other cases,
$L$ and $M$  together
contain at least seven extra vertices, 
giving $|V(G)| \geq 19$.    

\noindent \textbf{Case 4.} $C$ has five or more  
bridge vertices.\\
If $C$ has no crossed pairs of attachment
vertices, then two of the bridge vertices
determine two different attachment paths,
each containing no other attachment vertices.
By Lemma~\ref{bypass} these two attachment paths
each contains at least three extra vertices.
Hence $|V(G)| \geq 15+6 = 21$.
If $C$ has a crossed pair of attachment vertices, 
then by 
Lemma~\ref{crossed} we get $c(G) \geq 14$, and hence
$|V(G)| \geq 14+5 = 19$. 

Thus in all cases $|V(G)| \geq 18$.
In cases (2), (3) and (4)
we have $|V(G)| \ge 19$.
Hence if $|V(G)| =18$, then case (1) applies.
Then each of the four attachment paths has
3 extra vertices, giving 16 edges. 
There are 4 more edges incident with the bridge
vertices. Since $G$ is claw-free,
the pairs of vertices on $C$,
which are adjacent to the attachment vertices,
are adjacent, giving another 4 edges. Thus $|E(G)| \ge 24$.
If $|E(G)| =24$, then $G$ is isomorphic to 
graph $A$ in Figure~\ref{clawfree1} if the attachment
vertices are crossed, and to graph $B$ otherwise. \hspace*{\fill}\qd
\end{proof}
\begin{remark}\label{chord}
All the diagrams used in the proof 
of Theorem~\ref{greater-18} can be
regarded as chord diagrams\index{chord diagrams}.
A chord diagram is a set of $2n$ points on an
oriented circle joined pairwise by $n$ chords,
and they have applications in the theory of knots.
Two chord diagrams are ismorphic if one
can be obtained by some rotation of the
other. Sawada~\cite{sawada} has developed
(and implemented) algorithms for generating
nonisomorphic chord diagrams. In \cite{sawada}
Sawada lists the $5$ nonisomorphic chord 
diagrams having $3$
chords, and the $18$ nonisomorphic chord diagrams having
$4$ chords. These chord diagrams agree with
the diagrams we have in Cases 2 and
3  in Theorem~\ref{greater-18}. 
(There are $105$ chord diagrams with $5$
chords. Fortunately we did not have to
consider those diagrams!)
\end{remark}

Graphs $A$ and $B$ in Figure~\ref{clawfree1} 
are clearly isomorphic, respectively,  to the CND graphs 
labelled $A$ and $B$
in Figure~\ref{smallcnd} on page~\pageref{smallcnd}.
Therefore they are $2$-connected, claw-free,
nontraceable graphs, and it follows from
Theorem~\ref{greater-18} that 
they are the smallest such graphs.

\section{Maximal nontraceable  graphs}\label{last}
Since each of the graphs $A$ and $B$ in Figure~\ref{clawfree1}
is nontraceable,  each
is a spanning subgraph of some MNT graph. 
In order to produce MNT graphs \index{MNT graphs}
from $A$ and $B$ we use the following
lemma (which follows from Corollary 7 in \cite{bdm},
but here we give a direct proof):
\begin{lemma}
\label{mnt}
The neighbours of a vertex 
of degree two in a MNT graph are adjacent.
\end{lemma}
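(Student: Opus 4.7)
The plan is to argue by contradiction. Let $v$ have degree two in $G$ with neighbours $u$ and $w$, and suppose $uw \notin E(G)$. Since $G$ is MNT, the graph $G + uw$ contains a hamiltonian path $P$. If $P$ did not use the edge $uw$, then $P$ would already be a hamiltonian path in $G$, contradicting the fact that $G$ is nontraceable. So $P$ must use the edge $uw$.

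Next I would exploit the degree constraint on $v$. Since $N_{G+uw}(v) = N_G(v) = \{u, w\}$, the one or two neighbours of $v$ along $P$ must lie in $\{u,w\}$. This naturally splits into two cases depending on whether $v$ is interior or an endpoint of $P$. In the interior case, both path-neighbours of $v$ are forced to be $u$ and $w$, so $P$ contains $u,v,w$ as a subpath; then $u$ and $w$ are separated by $v$ on $P$ and in particular are not consecutive on $P$, contradicting the fact that $uw$ is an edge of $P$.

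In the endpoint case, we may assume without loss of generality that $P$ begins $v, u, \ldots$. Since $u$ has only two neighbours on $P$ (one being $v$) and the edge $uw$ lies on $P$, the other path-neighbour of $u$ is forced to be $w$, so $P$ begins $v, u, w, x_4, x_5, \ldots, x_n$. I would then produce a hamiltonian path of $G$ itself by swapping the first two vertices: the sequence
\[
u,\ v,\ w,\ x_4,\ x_5,\ \ldots,\ x_n
\]
uses only edges of $G$, because $uv$ and $vw$ are edges of $G$ (as $v$'s two neighbours), and the remaining edges $wx_4, x_4x_5, \ldots, x_{n-1}x_n$ all came from $P$ and are distinct from the single occurrence of $uw$ in $P$ (which was $u$'s second path-edge). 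This hamiltonian path in $G$ contradicts nontraceability, completing the proof.

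The argument is quite short; the only mild subtlety is realising that in the endpoint case one must be careful that the replacement path genuinely avoids the one forbidden edge $uw$, which is ensured by the fact that $u$ and $w$ each appear only once in $P$. The degree-two hypothesis on $v$ is exactly what makes the case analysis collapse to these two simple configurations.
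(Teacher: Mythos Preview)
Your proof is correct and follows essentially the same approach as the paper's: both argue by contradiction, add the missing edge, observe that the resulting hamiltonian path must use that edge, deduce that $v$ is forced to be an endpoint with the path beginning $v,u,w,\ldots$, and then reroute to $u,v,w,\ldots$ to obtain a hamiltonian path in $G$ itself. Your write-up is a bit more explicit about why the interior case is impossible, but the underlying argument is the same.
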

\begin{proof}
Let $G$ be a MNT graph, and $v \in V(G)$ have degree two.
Let  $x$, $y$ be the neighbours of $v$.
If $xy \notin E(G)$ then, since $G$ is MNT, 
the graph $G + xy$ has
a hamiltonian path $P$. Since $xy \in E(P)$,
and $\deg(v) =2$,  the vertex $v$ is an endvertex
of the path $P$. We can suppose that the path $P$ 
starts with the subpath $v,x,y$ (or $v,y,x$).
Therefore, if we replace the subpath
$v,x,y$ in $P$ with $x,v,y$ (or $v,y,x$ in $P$ with
$y,v,x$) we get a hamiltonian path in $G$, 
contradicting the fact that $G$ is
nontraceable. \hspace*{\fill} \qd\\
\end{proof}

In both graphs $A$ and $B$ in Figure~\ref{clawfree1} 
we add edges 
between the neighbours of each vertex of
degree two. Let the resulting graphs
be $A^*$ and $B^*$, shown in Figure~\ref{span}.
It can be checked that both $A^*$ and $B^*$ are MNT graphs.
It can also be checked that if $v$ is a vertex
of degree two in graph $A$, and $w$ is any other vertex 
in $A$, then $A+vw$ is traceable. 
Hence, by
Lemma~\ref{mnt}, every MNT graph containing $A$
as a spanning subgraph
will also contain $A^*$ as a spanning subgraph.
In other words, $A^*$ is the only MNT graph containing
$A$ as a spanning subgraph.
The graph $A^*$ is also claw-free.
Similarly,  the only MNT graph
containing $B$ as a spanning subgraph
is the graph $B^*$ in Figure~\ref{span},
but $B^*$ is not claw-free.
\begin{figure}[H]
\begin{center}
\epsfig{file=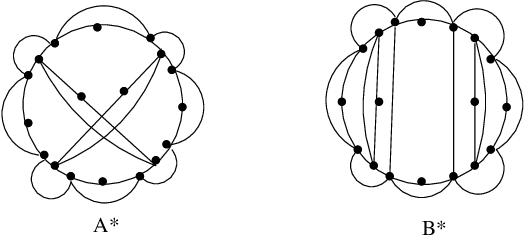}\\
\end{center}
\caption{\label{span} Graphs $A^*$ and $B^*$}
\end{figure}
Thus $A^*$ is the smallest 2-connected, 
claw-free, MNT graph. It can be drawn as
shown in Figure~\ref{clawfreemnt}.
\begin{figure}[H]
\begin{center}
\epsfig{file=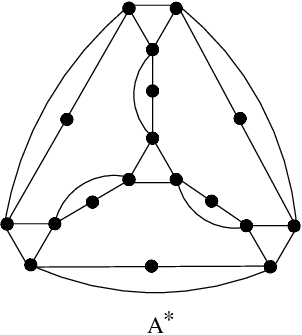}\\
\end{center}
\caption{\label{clawfreemnt} Graph $A^*$}
\end{figure}
It is shown in \cite{ms} that all
$2$-connected claw-free graphs
are $1$-tough. \index{$1$-tough graph}
(A graph $G$ is $t$-tough if
$|S| \geq t k(G-S)$ for every
cutset $S \subset V(G)$,
where $k(G-S)$ denotes the number of
components in $G-S$.)

The MNT graphs produced by Zelinka \cite{zel} are 
not 1-tough.  Hence, according to \cite{ms} 
these graphs are
not both 2-connected and claw-free.

We now give a construction of an infinite 
family of 2-connected, claw-free,
MNT graphs of order greater than 18: 
We construct a graph of order $18+m$
for every $m\geq 1$ by joining the 
vertices of a new $K_{m}$ to every vertex
of one of the $K_{3}$'s in $A^*$ which does not have a 
vertex of degree two. (We can also
produce such graphs by 
joining appropriate $K_{k}$'s to 
some or all four of the $K_{3}$'s
that do not contain vertices of degree two.)

At the Fourth Cracow Conference on 
Graph Theory (Czorsztyn 2002) Dudek \cite{du}
presented another construction 
which produced an infinite family of MNT
graphs which cannot be produced by 
using Zelinka's constructions. Also,
MNT graphs which are 2-tough 
are constructed in \cite{bbv}.

\backmatter

\newpage
\addcontentsline{toc}{chapter}{\protect\numberline{\ }{Index}}
\printindex
\end{document}